\documentclass[10pt,reqno]{amsart}
\usepackage{bbm}
\usepackage{mathrsfs}
\usepackage{amsfonts} 
\usepackage[dvipsnames,usenames]{color}
\textwidth=14.5cm 
\baselineskip=17pt 
\usepackage{graphicx,latexsym,bm,amsmath,amssymb,verbatim,multicol,lscape}
\usepackage{enumerate}
\usepackage{cite}
\usepackage[left=2cm,right=2cm,top=2.5cm,bottom=2.5cm]{geometry}
\vfuzz2pt 
\hfuzz2pt 
\newtheorem{thm}{Theorem} [section]
\newtheorem{lem}{Lemma}[section]

\newtheorem{cor}{Corollary}[section]

\theoremstyle{definition}

\theoremstyle{remark}
\newtheorem{rem}{Remark}[section]
\newtheorem{cnj}{Conjecture}[section]
\numberwithin{equation}{section}

\allowdisplaybreaks

\begin{document}

\title{The 3-adic valuations of Stirling numbers of the first kind }

\author[M. Qiu]{Min Qiu}
\address{School of Science, Xihua University, Chengdu 610039, P.R. China}
\email{minqiu@mail.xhu.edu.cn}
\author[Z.B. Lin]{Zongbing Lin$^*$}
\address{School of Mathematics and Computer Science, 
Panzhihua University, Panzhihua 617000, P.R. China}
\email{zongbinglin@sohu.com}
\author[L. Chen]{Long Chen}
\address{School of Mathematics and Computer Science, 
Panzhihua University, Panzhihua 617000, P.R. China}
\email{chenlongscumath@126.com}

\thanks{$^*$ Corresponding author.}

\begin{abstract}
Let $v_3$ denote the usual $3$-adic valuation, 
and let $s(n, k)$ be the unsigned Stirling 
number of the first kind. In this paper, for $a\in\{1,2\}$, 
we determine the values of $v_3(s(a3^n, k))$ for all $1\le k\le a3^n$. 
More precisely, for each admissible pair $(m, k)$, 
we obtain an explicit formula for $v_3(s(a3^n, a3^m-k))$. 
The proof combines properties of the 
$m$-th Stirling numbers of the first kind with a detailed analysis 
of the relevant $3$-adic orders. As a consequence,  
we prove the case $p=3$ of a conjecture of Hong and Qiu proposed in 2020.
We also derive formulas near the diagonal, comparison results 
for the adjacent orders $a3^n$ and $a3^n+1$, sharp upper bounds 
for the families $v_3(s(3^n, k))$ and $v_3(s(2\cdot3^n, k))$, 
and partial confirmations of conjectures of Lengyel 
and of Leonetti and Sanna.
\end{abstract}

\keywords{
3-adic valuation, 3-adic analysis, Stirling number of the
first kind, the $m$-th Stirling number of the first kind,
elementary symmetric function.
\newline
\textbf{Mathematics Subject Classification:} 11B73, 11A07}

\maketitle

\section{Introduction}

\subsection{Background and motivation}

For integers $n\ge 1$ and $0\le k\le n$, we write $s(n, k)$ for the 
unsigned Stirling number of the first kind and $S(n, k)$ for 
the Stirling number of the second kind. These two families 
are characterized by  
\[
(x)_n=\sum_{k=0}^n (-1)^{n-k}s(n, k)x^k
\quad \text{and}\quad 
x^n=\sum_{k=0}^n S(n, k)(x)_k,
\]
respectively, where $(x)_n=x(x-1)(x-2)\cdots(x-n+1)$ 
denotes the falling factorial. Equivalently, $s(n, k)$ 
counts permutations of an $n$-element set 
by their number of cycles, while $S(n, k)$ 
counts partitions of such a set into $k$ nonempty blocks.

Throughout this paper, $p$ denotes a prime. 
The arithmetic properties of Stirling numbers have been studied extensively. 
For Stirling numbers of the second kind, much of the 
literature concerns congruences, periodicity, and $p$-adic valuations; 
see, for example, \cite{[Am],[CF],[Da],[HZ1], 
[TL],[TL2],[TL3],[Lu],[Mi],[Wan],[HZ2],[HZ3]}. 
For Stirling numbers of the first kind, the corresponding 
valuation problem is less completely understood. 
Stirling numbers of the first kind are closely related to 
generalized harmonic sums. For $0\le k\le n$, define   
\[
H(n, 0):= 1, \quad
H(n, k) := \sum_{1 \le i_1 < \cdots < i_k \le n} \frac{ 1 } { i_1 \cdots i_k }. 
\]
Then $H(n, k)$ is the $k$-th elementary symmetric function of 
$1, 1/2, \ldots, 1/n$. The identity 
\begin{equation}
s(n+1, k+1)=n!H(n, k)\label{1.1}
\end{equation}
links Stirling numbers of the first kind with 
these reciprocal elementary symmetric functions; see \cite{[LS]}. 
Related $p$-adic properties of harmonic numbers and 
reciprocal elementary symmetric functions have been investigated in  
\cite{[Alt], [Bo], [CCG], [CDFG], [CT], [EN], [Es], [HW], [Ka], [LHQW], [N], [Sa], [T], [WH], [WC]}. 

We write $v_p$ for the usual $p$-adic valuation. 
Thus, for a nonzero integer $n$, 
$v_p(n)$ is the largest integer $r$ such that $p^r\mid n$, and we set 
$v_p(0)=\infty$. For a rational number $\frac{n_1}{n_2}$, we define 
$v_p(\frac{n_1}{n_2})=v_p(n_1)-v_p(n_2)$,  
where $n_1, n_2\in \mathbb{Z}$ and $n_2\ne 0$. 
By Legendre's formula, 
$v_p(n!)=\frac{n-d_p(n)}{p-1},$
where $d_p(n)$ is the sum of the base-$p$ digits of $n$.  
Hence (\ref{1.1}) shows that the study of $v_p(H(n,k))$ is 
equivalent to that of $v_p(s(n+1,k+1))$.

\subsection{Known results and conjectures}

We next place the present work in the context of several earlier 
results and conjectures. Lengyel proved in \cite{[L]} that, for 
every fixed positive integer $k$, the sequence $v_p(s(n, k))$ 
tends to infinity as $n\to \infty$. More precisely, for each pair 
$(k, p)$ there exist constants $c=c(k, p)> 0$ and $n_0=n_0(k, p)$  
such that $v_p(s(n, k))\ge cn$ whenever $n\ge n_0$. 
Together with (\ref{1.1}), this gives corresponding lower estimates 
for $v_p(H(n, k))$. Leonetti and Sanna proposed the following 
conjecture and confirmed it for some special cases.   
\begin{cnj}[{\cite[Conjecture 1.1]{[LS]}}]\label{cnj1}
For any prime number $p$ and any integer $k \ge 1$, there exists a constant
$c = c(p, k) > 0$ such that
\[
v_p(H(n,k)) < -c \log n
\]
for all sufficiently large integers $n$.
\end{cnj}

Several more explicit formulas are known in special situations. 
Komatsu and Young \cite{[KY]} considered integers of the form 
$n=kp^r+m$, where $0\le m<p^r$, and obtained 
$v_p(s(n+1,k+1))=v_p(n!)-v_p(k!)-kr$. 
For powers of $2$, Qiu and Hong \cite{[QH]} determined 
$v_2(s(2^n, k))$ for the full range $1\le k\le 2^n$. 
In a subsequent work, Hong and Qiu \cite{[HQ]} proposed 
a general conjectural formula for 
$v_p(s(ap^n, ap^m-k))$. To state it, let $\langle k\rangle$ 
be the integer satisfying $0\le \langle k\rangle\le p-2$
and $k\equiv \langle k\rangle \pmod {p-1}$. Put 
$\epsilon_k :=0$ if $k$ is even, and $\epsilon_k :=1$ if $k$ is odd.  
For a real number $x$, let $\lfloor x\rfloor$ denote the greatest
integer not exceeding $x$, and let $B_n$ be the $n$-th Bernoulli number. 
Hong and Qiu proposed the following conjecture. 
\begin{cnj}[{\cite[Conjecture 5.1]{[HQ]}}]\label{cnj2}
Let $p$ be an odd prime. Let $a, n, m, k$ be positive integers such that
$1\le a\leq p-1$, $1\le m\le n$ and $2\le k\le ap^n-2$.
Then each of the following is true:

\textup{(i)} If $2\le k \le a(p-1)p^{m-1}+1<ap^m$, then
\[
v_p(s(ap^n, ap^m-k))=\frac{a}{p-1}(p^n-p^m)-(n-m)(ap^m-k)
+m+(m+v_p(k))\epsilon_k+T_k,
\]
where
\[
T_k:=\begin{cases}
 -1- v_p(\lfloor\frac{k}{2}\rfloor),
 & \text{if }\ k\equiv \epsilon_k\pmod{p-1}; \\
v_p(B_{2\lfloor\frac{\langle k\rangle}{2}\rfloor}),
& \text{if }\ k\not\equiv \epsilon_k\pmod{p-1}.
\end{cases}
\]

\textup{(ii)} If $a\ge 4$ and $a(p-1)+2\le k\le ap-2$, then
\[
v_p(s(ap^n, ap-k))\ge\frac{a}{p-1}(p^n-p)-(n-1)(ap-k)+a+k-ap.
\]

\end{cnj}

\noindent Hong and Qiu also presented results on 
$v_p(s(ap, k))$ and $v_p(s(ap^n, ap^n-k))$ when $p\ge 5$ in \cite{[HQ]}. 

The results above indicate that the $p$-adic valuations of Stirling numbers 
of the first kind remain far from fully understood, 
especially in explicit form. In this paper, we focus on 
the case $p=3$. Some isolated $3$-adic formulas were 
previously known. For example,  
Lengyel \cite{[L]} proved that $v_3(s(3^n, 2))=\frac{1}{2}(3^n+3)-2n$,
$v_3(s(3^n, 3))=\frac{1}{2}(3^n+3)-3n$ and $v_3(s(2\cdot3^n, 2))=3^n-2n-1$.
Komatsu and Young \cite{[KY]}, using a $p$-adic Newton polygon argument, 
proved that $v_3(s(3^n, 3^m))=\frac{1}{2}(3^n-3^m)-3^m(n-m)$ and
$v_3(s(2\cdot3^n,2\cdot3^m))=3^n-3^m-2\cdot3^m(n-m)$. 
Our result gives a uniform formula that covers 
these cases as special instances. 
For $a\in\{1,2\}$, our aim is to determine 
$v_3(s(a3^n, i))$ for every $1\le i\le a3^n$. 
The two boundary values follow from the standard identities for Stirling
numbers of the first kind; see, for example, \cite{[LC]}. Indeed, 
\[
v_3(s(a3^n, a3^n))=v_3(1)=0\quad \text{and} \quad
v_3(s(a3^n, a3^n-1))=v_3\Big(\binom{a3^n}{2}\Big)
=v_3\Big(\frac{a3^n(a3^n-1)}{2}\Big)=n.
\]
Thus it remains to understand the remaining indices. 
If $1\le i\le a3^n-2$, then $i$ can be written in the form 
$i=a3^m-k$ with $(m,k)\in T_{a,n}$, where 
\[
T_{a,n}:=\{(m,k) : 1\le m\le n,\  2\le k\le 2a3^{m-1}+1,
\  k<a3^m, m, k\in \mathbb{Z}\}.
\]

\subsection{Main results}

We now state the main results of this paper. The first result gives an 
explicit formula for $v_3(s(a3^n, a3^m-k))$ with 
$a\in\{1,2\}$ and $(m, k)\in T_{a,n}$. 

\begin{thm}\label{thm1}
Let $a\in\{1,2\}$, and let $n, m, k$ be positive integers such that 
$1\le m\le n$, $2\le k\le 2a3^{m-1}+1$ and $k<a3^m$. Then 
\begin{equation}\label{1.4}
v_3(s(a3^n,a3^m-k))
=\frac{a}{2}(3^n-3^m)-(n-m)(a3^m-k)
+m-1-v_3\Big(\Big\lfloor\frac{k}{2}\Big\rfloor\Big)+(m+v_3(k))\epsilon_k,
\end{equation}
where $\epsilon_k =0$ if $k$ is even and $\epsilon_k =1$ if $k$ is odd.
\end{thm}

Since $p-1=2$ when $p=3$, every integer $k$ satisfies 
$k\equiv \epsilon_k \pmod 2$. Hence Theorem \ref{thm1} 
confirms the first part of Conjecture \ref{cnj2} in the case $p=3$; 
the second part of that conjecture is then vacuous. 
Moreover, Lengyel proved in \cite{[L]} that, for any prime $p$, any integer
$a\ge 1$ with $(a,p)=1$, and any even $k\ge 2$ with the condition
\begin{equation}\label{1.5}
\text{there exists } n_1> 3\log_p{k}+\log_p{a}\ \text{such that }
v_p(s(ap^{n_1}, ap^{n_1}-k))< n_1
\end{equation}
one has
$v_p(s(ap^{n+1},ap^{n+1}-k))=v_p(s(ap^n,ap^n-k))+1$
for all $n\ge n_1$. The same conclusion also holds 
for $k=1$ with $n_1=1$. Lengyel further expected that 
(\ref{1.5}) holds for all even integers $k\ge 2$. 
For odd integers $k\ge 3$, he proposed the following conjecture.

\begin{cnj}[{\cite[Conjecture 3.1]{[L]}}]\label{cnj3}
Let $p$ be a prime, let $a\ge 1$ be an integer with $(a, p) = 1$, 
and let $k\ge 3$ be odd. Then there exists $n_1=n_1(p, k)$ 
such that, for all $n\ge n_1$, 
\[
v_p(s(ap^{n+1}, ap^{n+1}-k)) = v_p(s(ap^n, ap^n-k))+2.
\]
\end{cnj}

By taking $m=n$ in Theorem \ref{thm1}, 
we obtain the following explicit formula near the diagonal.

\begin{cor}\label{cor1}
Let $a\in\{1,2\}$ and let $n, k\in \mathbb{Z}^+$ with 
$2\le k\le 2a3^{n-1}+1$ and $k<a3^n$. Then
\[
v_3(s(a3^n, a3^n-k)) =\begin{cases}n-1-v_3(k),
& \ \text{if }\ 2\mid k, \\
2n-1+v_3(k)-v_3(k-1), & \ \text{if }\ 2\nmid k. \end{cases}
\]
\end{cor}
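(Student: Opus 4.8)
The plan is to specialize Theorem~\ref{thm1} to the diagonal case $m=n$. The hypothesis $2\le k\le 2a3^{n-1}+1<a3^n$ of the corollary is precisely the hypothesis $2\le k\le 2a3^{m-1}+1<a3^m$ of the theorem evaluated at $m=n$, so the corollary is an immediate consequence once the closed form (\ref{1.4}) is simplified. All the genuine content therefore resides in Theorem~\ref{thm1}, and what remains is a short computation.

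First I would substitute $m=n$ into (\ref{1.4}). The two terms carrying the dependence on $n-m$, namely $\frac{a}{2}(3^n-3^m)$ and $-(n-m)(a3^m-k)$, both vanish, leaving
\begin{align*}
v_3(s(a3^n,a3^n-k))=n-1-v_3\Big(\Big\lfloor\frac{k}{2}\Big\rfloor\Big)+(n+v_3(k))\epsilon_k.
\end{align*}
It then suffices to evaluate the floor term and the factor $\epsilon_k$ according to the parity of $k$.

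For even $k$ I would use $\epsilon_k=0$, so the last summand disappears, and $\lfloor k/2\rfloor=k/2$; since $\gcd(2,3)=1$ one has $v_3(k/2)=v_3(k)$, which gives $v_3(s(a3^n,a3^n-k))=n-1-v_3(k)$. For odd $k$ I would use $\epsilon_k=1$, so the last summand contributes $n+v_3(k)$, and $\lfloor k/2\rfloor=(k-1)/2$; again $\gcd(2,3)=1$ yields $v_3((k-1)/2)=v_3(k-1)$, whence
\begin{align*}
v_3(s(a3^n,a3^n-k))=n-1-v_3(k-1)+n+v_3(k)=2n-1+v_3(k)-v_3(k-1).
\end{align*}
These two outcomes match the two branches of the asserted formula. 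As the argument is a direct specialization, there is no substantive obstacle; the only points requiring care are the two elementary rewrites $v_3(\lfloor k/2\rfloor)=v_3(k)$ in the even case and $v_3(\lfloor k/2\rfloor)=v_3(k-1)$ in the odd case, both of which rely solely on $2$ being a unit in the ring of $3$-adic integers.
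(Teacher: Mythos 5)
Your proposal is correct and is exactly the paper's route: the authors obtain Corollary \ref{cor1} by the same specialization $m=n$ of Theorem \ref{thm1}, and your parity-wise simplification of $v_3(\lfloor k/2\rfloor)$ using the fact that $2$ is a $3$-adic unit is the intended (and only) computation.
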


In particular, Corollary \ref{cor1} verifies Lengyel's expected 
condition (\ref{1.5}) for $p=3$, $a\in \{1,2\}$ and all even integers $k\ge 2$, 
once $n$ is sufficiently large. It also gives the predicted increment 
in Lengyel's conjecture for odd $k$ in the same $3$-adic setting. 

The main formula also yields a comparison between adjacent 
orders. Similar phenomena are known in the $2$-adic case: 
Hong, Zhao and Zhao \cite{[HZ1]} proved an adjacent-order 
identity for Stirling numbers of the second kind, and Qiu and 
Hong \cite{[QH]} obtained the corresponding identity 
$v_2(s(2^n+1, k+1))=v_2(s(2^n, k))$ for Stirling numbers of 
the first kind. The next theorem gives the corresponding 
$3$-adic analogue, with an additional parity distinction.

\begin{thm}\label{thm2}
Let $a\in\{1,2\}$ and let $n, k\in \mathbb{Z}^+$ with $k\le a3^n$. Then
\begin{equation}\label{1.7}
\begin{aligned}
v_3(s(a3^n+1, k+1)) &=v_3(s(a3^n, k)), \quad && \text{if }\ 2\mid (k-a), \\
v_3(s(a3^n+1, k+1)) &\ge v_3(s(a3^n, k+1))+n, \quad && \text{if }\ 2\nmid (k-a).
\end{aligned}
\end{equation}
\end{thm}

Another consequence concerns the maximal possible 
valuation in each family. While the $2$-adic situation has 
a particularly simple upper bound, the $3$-adic case separates 
according to whether the order is $3^n$ or $2\cdot 3^n$. 

\begin{thm}\label{thm3}
Let $n, k\in \mathbb{Z}^+$ with $k\le 3^n$. Then
\begin{equation}\label{1.8}
v_3(s(3^n,k))
\le \begin{cases}
v_3(s(3^n,1))=\dfrac{1}{2}(3^n-2n-1), & \text{if }\ n\ge 3,\\
v_3(s(3^2, 6))=4, & \text{if }\ n=2,\\
v_3(s(3,2))=1, & \text{if }\ n=1.\end{cases}
\end{equation}
\end{thm}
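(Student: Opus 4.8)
The plan is to prove Theorem \ref{thm3} by combining the explicit formula from Theorem \ref{thm1} with the three boundary evaluations ($k=1$, $k=a3^n-1$, $k=a3^n$) computed in the text. Since every index $k$ with $2\le k\le 3^n-2$ can be written uniquely as $k=3^m-j$ with $(m,j)\in T_{1,n}$, the bulk of the argument reduces to showing
$$
v_3(s(3^n,3^m-j))\le \tfrac12(3^n-2n-1)
$$
for all admissible pairs $(m,j)$, i.e. $1\le m\le n$ and $2\le j\le 2\cdot 3^{m-1}+1$. Substituting the right-hand side of \eqref{1.4} (with $a=1$), I would set
$$
f(m,j):=\tfrac12(3^n-3^m)-(n-m)(3^m-j)+m-1-v_3\big(\lfloor j/2\rfloor\big)+(m+v_3(j))\epsilon_j
$$
and seek to bound $f(m,j)$ above by $\tfrac12(3^n-2n-1)$. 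After cancelling the common $\tfrac12 3^n$, this is equivalent to the purely elementary inequality
$$
-\tfrac12 3^m-(n-m)(3^m-j)+m-1-v_3\big(\lfloor j/2\rfloor\big)+(m+v_3(j))\epsilon_j\le -n+\tfrac12 .
$$

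First I would treat the two parity cases separately, since $\epsilon_j$ and the valuation terms behave very differently. When $j$ is even, $\epsilon_j=0$ and the $(m+v_3(j))$ term drops out, leaving a quantity dominated by the strongly negative $-\tfrac12 3^m$; here the inequality should be comfortable for all $m\ge 1$ once one checks that $-(n-m)(3^m-j)\le 0$ and that $v_3(\lfloor j/2\rfloor)\ge 0$. When $j$ is odd, $\epsilon_j=1$ contributes $+m+v_3(j)$, which is the only place the bound can be tight; the worst case is $m=n$ (so the $-(n-m)$ term vanishes and $3^m=3^n$ is largest in magnitude but enters with a negative sign), and also small odd $j$ with large $v_3(j)$. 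I would therefore isolate the function $g(m,j)=m+v_3(j)-v_3(\lfloor j/2\rfloor)$ and bound it crudely using $v_3(j)\le \log_3 j\le \log_3(2\cdot3^{m-1}+1)<m$, which keeps $g$ small relative to the dominant $-\tfrac12 3^m+m$ terms.

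The main obstacle I anticipate is not the generic estimate but the \emph{small-$n$ exceptional behaviour}, which is exactly why the theorem states separate bounds for $n=1$ and $n=2$. For $n\ge 3$ the exponential gap $\tfrac12 3^m$ swamps all the linear-in-$m$ and logarithmic-in-$j$ correction terms, so a clean monotonicity/induction argument should close the case. But for $n=1,2$ the range of $(m,j)$ is tiny and the inequality against $\tfrac12(3^n-2n-1)$ actually fails — the genuine maximum is attained at an interior index ($k=6$ when $n=2$, $k=2$ when $n=1$), so these must be verified by direct finite computation of $v_3(s(9,k))$ for $1\le k\le 9$ and $v_3(s(3,k))$ for $1\le k\le 3$, and the resulting values $4$ and $1$ recorded as the sharp bounds. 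I would thus structure the proof as: (i) reduce to \eqref{1.4} plus the three boundary values; (ii) dispose of $n\ge3$ by the elementary estimate above, splitting on the parity of $j$; and (iii) settle $n=1,2$ by exhaustive evaluation, confirming that the stated exceptional maxima $v_3(s(3^2,6))=4$ and $v_3(s(3,2))=1$ dominate.
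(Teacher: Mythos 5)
Your proposal follows essentially the same route as the paper's proof: substitute the closed formula \eqref{1.4} (with $a=1$) for $k=3^m-j$ with $(m,j)\in T_{1,n}$, split on the parity of $j$, let the dominant term $\tfrac12 3^m$ absorb the linear and valuation corrections when $n\ge 3$, and settle $n=1,2$ by direct checking (the paper handles $n=2$ via the same formula rather than by enumerating $v_3(s(9,k))$, but that is cosmetic). One arithmetic slip to fix: after cancelling $\tfrac12 3^n$ the target inequality must read $\le -n-\tfrac12$, not $\le -n+\tfrac12$, and this matters because equality is attained at $(m,j)=(1,2)$ (i.e.\ at $k=1$), so the weaker constant would only yield $v_3(s(3^n,k))\le v_3(s(3^n,1))+1$.
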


\begin{thm}\label{thm4}
Let $n, k\in \mathbb{Z}^+$ with $k\le 2\cdot 3^n$. Then
\begin{equation}\label{1.9}
v_3(s(2\cdot3^n, k))
 \le \begin{cases}
 v_3(s(2\cdot3^n,1))=3^n-n-1, & \text{if }\ n\ge 2,\\
v_3(s(2\cdot3, 3))=2, & \text{if }\ n=1.\end{cases}
\end{equation}
\end{thm}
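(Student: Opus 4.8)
The plan is to use Theorem \ref{thm1} to bound $v_3(s(2\cdot3^n,k))$ for every $k$ and to show that, for $n\ge 2$, the maximum is attained at $k=1$. First I would dispose of the three arguments not covered by the parametrization $k=2\cdot3^m-j$, namely $k\in\{1,\,2\cdot3^n-1,\,2\cdot3^n\}$; their valuations are $3^n-n-1$, $n$ and $0$ respectively (all recorded in the introduction), and for $n\ge 2$ each is $\le 3^n-n-1$, with equality only at $k=1$. For the remaining range $2\le k\le 2\cdot3^n-2$ I would write $k=2\cdot3^m-j$ with $(m,j)\in T_{2,n}$, which is possible by the representation fact stated before Theorem \ref{thm1}, and invoke (\ref{1.4}).

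Putting $a=2$ in (\ref{1.4}), the target inequality $v_3(s(2\cdot3^n,2\cdot3^m-j))\le 3^n-n-1$ is equivalent to $G(m,j)\le 0$, where
\begin{align*}
G(m,j):=(n+m)-3^m-(n-m)(2\cdot3^m-j)-v_3\Big(\Big\lfloor\tfrac{j}{2}\Big\rfloor\Big)+(m+v_3(j))\epsilon_j.
\end{align*}
Thus for $n\ge 2$ the theorem reduces to the uniform bound $G(m,j)\le 0$ on $T_{2,n}$ together with the elementary endpoint checks above; note that the pair $(1,5)\in T_{2,n}$ returns $k=1$, so the uniform bound will simply reconfirm the value at the genuine maximum.

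To prove $G\le 0$ I would record three facts valid on $T_{2,n}$: (i) since $j\le 4\cdot3^{m-1}+1$ we have $2\cdot3^m-j\ge 2\cdot3^{m-1}-1\ge 1$; (ii) since $j<2\cdot3^m$ we have $v_3(j)\le m$; and (iii) consequently $-v_3(\lfloor j/2\rfloor)+(m+v_3(j))\epsilon_j\le 2m$, because the last summand vanishes when $j$ is even and is at most $m+m$ when $j$ is odd. Feeding (i)--(iii) into $G$ and using $n-m\ge 0$ gives
\begin{align*}
G(m,j)\le 4m-3^m+(n-m)\big(1-(2\cdot3^m-j)\big)\le 4m-3^m.
\end{align*}
Since $3^m>4m$ for all $m\ge 2$, this settles every pair with $m\ge 2$ at one stroke.

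The remaining work, and the main obstacle, is the base layer $m=1$, where the estimate only yields $G\le 1$ and the exponential no longer dominates. Here $j\in\{2,3,4,5\}$, and I would simply evaluate $G(1,j)$: writing $d=n-1$, one finds $G(1,2)=-1-3d$, $G(1,3)=1-2d$, $G(1,4)=-1-d$ and $G(1,5)=0$, so for $n\ge 2$ (i.e.\ $d\ge 1$) all four are $\le 0$, the value $0$ occurring at $j=5$, which corresponds precisely to $k=1$. This confirms $G\le 0$ throughout and completes the case $n\ge 2$. The delicate point is exactly the odd-$j$ subcase at $m=1$: the extra term $m+v_3(j)$ can be as large as $2m$, and it is cancelled only by the strict positivity of $n-m$, which is why the pattern breaks when $n=1$. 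For $n=1$ I would finish by direct computation: the six valuations $v_3(s(6,k))$ for $k=1,\dots,6$ are $1,0,2,0,1,0$, whose maximum $v_3(s(6,3))=2$ yields the stated exceptional bound.
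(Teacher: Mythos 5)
Your proposal is correct and takes essentially the same route as the paper: both reduce the claim to the explicit formula of Theorem \ref{thm1} by writing $k=2\cdot3^m-j$ with $(m,j)\in T_{2,n}$, handle the endpoints $k\in\{2\cdot3^n-1,\,2\cdot3^n\}$ separately, and then verify an elementary inequality for the resulting expression. The only difference is organizational --- the paper splits on the parity of $j$ to get a bound valid for all $m\ge 1$ at once, whereas you use a crude uniform bound $G\le 4m-3^m$ for $m\ge 2$ together with a finite check at $m=1$ and a direct computation at $n=1$ --- and both verifications go through.
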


Finally, by translating the preceding estimates through (\ref{1.1}), 
we obtain the following consequence for the elementary symmetric 
functions $H(n, k)$, which partially confirms Conjecture \ref{cnj1}.
\begin{cor}\label{cor2}
Let $a\in \{1,2\}$. Let $n$ and $k$ be positive integers 
such that $n\ge 3$, $k\le a3^n$ and $2\mid (k-a)$. Then 
$v_3(H(a3^n,k)) \leq -n.$
\end{cor}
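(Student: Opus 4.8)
The plan is to reduce everything to the Stirling numbers already treated in Theorems \ref{thm2}--\ref{thm4} by means of the basic identity (\ref{1.1}). Writing $s(a3^n+1,k+1)=(a3^n)!\,H(a3^n,k)$ and passing to $3$-adic valuations gives $v_3(H(a3^n,k))=v_3(s(a3^n+1,k+1))-v_3((a3^n)!)$. Since $a3^n$ has base-$3$ digit sum $d_3(a3^n)=a$ for $a\in\{1,2\}$, the Legendre formula $v_3(N!)=\frac{N-d_3(N)}{2}$ recorded in the introduction yields the clean value $v_3((a3^n)!)=\frac{a3^n-a}{2}$.

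Next I would exploit the hypothesis $2\mid(k-a)$, which is exactly the first (equality) branch of Theorem \ref{thm2}; thus $v_3(s(a3^n+1,k+1))=v_3(s(a3^n,k))$, and the computation collapses to $v_3(H(a3^n,k))=v_3(s(a3^n,k))-\frac{a3^n-a}{2}$. It then remains to insert the upper bounds for $v_3(s(a3^n,k))$. For $a=1$ and $n\ge 3$, Theorem \ref{thm3} gives $v_3(s(3^n,k))\le\frac12(3^n-2n-1)$, so $v_3(H(3^n,k))\le\frac12(3^n-2n-1)-\frac{3^n-1}{2}=-n$. For $a=2$ (and in particular $n\ge 3\ge 2$), Theorem \ref{thm4} gives $v_3(s(2\cdot3^n,k))\le 3^n-n-1$, whence $v_3(H(2\cdot3^n,k))\le(3^n-n-1)-(3^n-1)=-n$. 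In both cases the asserted bound $v_3(H(a3^n,k))\le -n$ follows.

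Given that Theorems \ref{thm2}--\ref{thm4} are already available, this corollary is essentially a bookkeeping computation, and I do not anticipate a genuine obstacle. The only points demanding care are verifying that the parity condition $2\mid(k-a)$ selects the equality branch of Theorem \ref{thm2} rather than the inequality branch, checking that the range $k\le a3^n$ is compatible with the hypotheses of all three theorems invoked, and confirming that the single assumption $n\ge 3$ suffices to license both the $a=1$ bound (which requires $n\ge 3$) and the $a=2$ bound (which requires only $n\ge 2$). All of the analytic difficulty has been absorbed into the earlier results.
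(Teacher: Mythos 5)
Your proposal is correct and follows essentially the same route as the paper: identity (\ref{1.1}), the equality branch of Theorem \ref{thm2} selected by $2\mid(k-a)$, and then the upper bounds of Theorems \ref{thm3} and \ref{thm4}. The only cosmetic difference is that you evaluate $v_3((a3^n)!)$ and the bounds explicitly, whereas the paper shortcuts via $v_3(s(a3^n,1))-v_3((a3^n)!)=v_3((a3^n-1)!)-v_3((a3^n)!)=-n$; both computations agree.
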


Thus our results provide evidence for conjectures of Hong 
and Qiu, Lengyel, and Leonetti and Sanna in the $3$-adic setting.

\subsection{Proof strategy and organization}

We briefly describe the main strategy of the proof of Theorem \ref{thm1}. 
The main difference from the $2$-adic case lies in the structure of the
parameter range. For powers of $2$, one deals with the single family
$s(2^n,k)$. In the present $3$-adic setting, the two families
$s(3^n,k)$ and $s(2\cdot 3^n,k)$ have to be treated simultaneously.
In addition, the valuation formula contains an essential parity correction
through $\epsilon_k$ and $v_3(\lfloor k/2\rfloor)$, which is responsible
for the finer case decomposition in the proof. 

The proof proceeds by induction on $n$.  
We first reduce the odd values of $k$ to the even values. 
Thus it remains to establish the formula for even $k$. 
Under the induction hypothesis for the family $s(3^n, 3^m-k)$, 
we first derive the corresponding formula for $v_3(s(2\cdot 3^n, 2\cdot 3^m-k))$. 
This is the first key step. We then use this formula, 
together with the convolution identity for Stirling numbers 
of the first kind and estimates for the $m$-th Stirling numbers,  
to obtain the formula for $v_3(s(3^{n+1}, 3^m-k))$. This 
completes the induction.  

The remainder of this paper is organized as follows. Section 2 collects the
auxiliary results needed in the sequel, including the reduction from odd
values of $k$ to even values of $k$ and comparison estimates for the
$m$-th Stirling numbers of the first kind. In Section 3, we establish the
key inductive step from the order $3^n$ to the order $2\cdot3^n$.
Section 4 completes the induction by passing from the order
$2\cdot3^n$ to the order $3^{n+1}$, thereby proving Theorem \ref{thm1}.
Section 5 applies the main formula to prove Theorems \ref{thm2}--\ref{thm4}
and Corollary \ref{cor2}. Section 6 concludes the paper with a brief
discussion of the restriction to the prime $3$ and the obstruction to a
direct extension of the present method to arbitrary odd primes.

\section{Preliminaries and reductions}

This section collects the auxiliary results used in the proof of Theorem \ref{thm1}.
We first recall a standard identity for Stirling numbers of the first kind.
We then prove a reduction showing that the odd case follows from the even case.
Finally, we introduce the $m$-th Stirling numbers of the first kind and derive
the comparison estimates needed in the induction.

\subsection{Basic identities}

We begin with the following standard identity for Stirling numbers of the first kind.

\begin{lem}[\cite{[VA]}]\label{lem2.1}
Let $n$ and $k$ be positive integers. If $n+k$ is odd, then
\[
s(n,k)=\frac{1}{2}\sum_{i=k+1}^{n}(-1)^{n-i}s(n,i)\binom{i-1}{i-k}n^{i-k}.
\]
\end{lem}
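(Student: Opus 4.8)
The plan is to establish the identity
$$
s(n,k)=\frac{1}{2}\sum_{i=k+1}^{n}(-1)^{n-i}s(n,i)\binom{i-1}{i-k}n^{i-k}
$$
when $n+k$ is odd, by starting from the defining generating function $(x)_n=\sum_{i=0}^n(-1)^{n-i}s(n,i)x^i$ and exploiting a symmetry that exchanges the argument $x$ with a suitable linear reflection. The key observation is that the falling factorial satisfies a reflection relation: writing $(x)_n=x(x-1)\cdots(x-n+1)$, one checks directly that $(n-x)_n=(n-x)(n-x-1)\cdots(1-x)=(-1)^n(x)_n$, since each factor $n-x-j$ equals $-(x-(n-1-j))$ and the $n$ sign changes produce $(-1)^n$. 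This is the structural fact that will force cancellation of half the terms and leave the factor $\frac12$.

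First I would substitute $x\mapsto n-x$ (equivalently expand around the reflected point) inside the generating-function identity. Concretely, I expand $(n-x)_n=\sum_{i=0}^n(-1)^{n-i}s(n,i)(n-x)^i$ and then apply the binomial theorem to each $(n-x)^i=\sum_{j=0}^i\binom{i}{j}n^{i-j}(-x)^j$, collecting the coefficient of each power $x^j$. Using the reflection relation $(n-x)_n=(-1)^n(x)_n$, the coefficient of $x^j$ obtained this way must equal $(-1)^n$ times the coefficient of $x^j$ in the original expansion, namely $(-1)^n\cdot(-1)^{n-j}s(n,j)=(-1)^{-j}s(n,j)$. Equating the two expressions for the coefficient of $x^k$ yields a linear relation among $s(n,k)$ and the higher $s(n,i)$ with $i>k$, in which the term $i=k$ on both sides can be compared. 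After reindexing (the binomial $\binom{i}{k}$ combined with an identity relating it to $\binom{i-1}{i-k}$, or a direct manipulation of the $x^k$-coefficient), the parity hypothesis that $n+k$ is odd is exactly what makes the diagonal $s(n,k)$ terms combine as $s(n,k)-(-1)^{n-k}s(n,k)=2s(n,k)$ rather than cancel, thereby isolating $2s(n,k)$ on one side.

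The main obstacle I expect is the bookkeeping in matching the binomial coefficients: the naive expansion produces $\binom{i}{k}n^{i-k}$, whereas the claimed identity carries $\binom{i-1}{i-k}$. Reconciling these requires either shifting the index carefully or invoking the recurrence $s(n,i)=s(n-1,i-1)-(n-1)s(n-1,i)$ to absorb one power of $n$ and convert $\binom{i}{k}$ into $\binom{i-1}{i-k}$. I would handle this by comparing coefficients of $x^{k-1}$ rather than $x^k$ in a slightly rewritten form, or by differentiating the generating function once so that the extra factor of $n$ appears naturally; the parity condition $n+k$ odd guarantees the surviving combination is $2s(n,k)$ and not $0$. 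Once the correct binomial is pinned down, the remaining steps are routine algebraic simplification, and the factor $\frac12$ emerges from solving the resulting equation $2s(n,k)=\sum_{i=k+1}^n(\cdots)$ for $s(n,k)$.
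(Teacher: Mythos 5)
The paper offers no proof of this lemma---it is quoted directly from Adamchik \cite{[VA]}---so your attempt can only be judged on its own merits. Your overall strategy (a reflection symmetry of the falling factorial forces the diagonal term to double rather than cancel when $n+k$ is odd, and the factor $\frac12$ drops out) is indeed the standard route to this identity. But the ``structural fact'' you build everything on is false: $(n-x)_n\ne(-1)^n(x)_n$. Your factor-by-factor computation has an off-by-one error ($n-x-j=-(x-(n-j))$, not $-(x-(n-1-j))$); already for $n=1$ one has $(1-x)_1=1-x$ while $(-1)^1(x)_1=-x$. The roots of $(x)_n$ are $0,1,\dots,n-1$, symmetric about $(n-1)/2$, so the valid reflection is $(n-1-x)_n=(-1)^n(x)_n$. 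If you run your argument with this corrected reflection you do get a true identity, but not the one claimed: comparing coefficients of $x^k$ yields
\begin{equation*}
s(n,k)=\frac12\sum_{i=k+1}^{n}(-1)^{n-i}s(n,i)\binom{i}{k}(n-1)^{i-k},
\end{equation*}
with $(n-1)^{i-k}$ and $\binom{i}{k}$ in place of $n^{i-k}$ and $\binom{i-1}{i-k}$. No amount of binomial reindexing converts one into the other; they are genuinely different identities that happen both to hold.

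The repair is the option you mention only in passing and do not commit to: divide by $x$ first. The polynomial $h(x):=(x)_n/x=(x-1)(x-2)\cdots(x-n+1)=\sum_{i\ge1}(-1)^{n-i}s(n,i)x^{i-1}$ has roots $1,\dots,n-1$, symmetric about $n/2$, so $h(n-x)=(-1)^{n-1}h(x)$. Expanding $h(n-x)=\sum_i(-1)^{n-i}s(n,i)(n-x)^{i-1}$ by the binomial theorem and equating coefficients of $x^{k-1}$ gives
$\sum_{i\ge k}(-1)^{n-i}s(n,i)\binom{i-1}{k-1}n^{i-k}=s(n,k)$; isolating the $i=k$ term $(-1)^{n-k}s(n,k)$ and using that $n-k$ is odd produces $2s(n,k)$ on the left, and $\binom{i-1}{k-1}=\binom{i-1}{i-k}$ gives exactly the stated formula. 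So your plan is salvageable, but as written the key identity is wrong, and the correct choice of which polynomial to reflect (and about which point) is precisely what produces the $\binom{i-1}{i-k}n^{i-k}$ in the lemma; it is not mere bookkeeping.
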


This identity will be used to relate neighboring values of the 
parameter $k$ and to reduce the odd case to the even case. 

\subsection{Reduction from odd $k$ to even $k$}

The aim of this subsection is to show that the odd case of formula (\ref{1.4}) 
is a consequence of the even case. We first establish a valuation 
relation between the adjacent Stirling numbers $s(a3^n, a3^n-2t)$ and 
$s(a3^n, a3^n-2t-1)$, under the assumption that (\ref{1.4}) holds for 
the relevant even indices. We then apply this relation to obtain formula 
(\ref{1.4}) for odd $k$. 


\begin{lem}\label{lem2.2}
Let $a\in\{1,2\}$, and let $n$ be a positive integer. Assume that 
(\ref{1.4}) holds for all integers $m$ and even $k$ with $(m,k)\in T_{a,n}$. 
Then, for every integer $t$ satisfying $0\le t\le \frac{a3^n-2}{2}$, we have
\begin{align}\label{2.1}
v_3(s(a3^n,a3^n-2t-1))=v_3(s(a3^n,a3^n-2t))+v_3(2t+1)+n.
\end{align}
\end{lem}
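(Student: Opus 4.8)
The plan is to argue by strong induction on $t$, using Lemma \ref{lem2.1} to convert the odd-index Stirling number $s(a3^n,a3^n-2t-1)$ into a finite sum whose terms are controlled either by the hypothesis (\ref{1.4}) (for even indices) or by the inductive hypothesis (for smaller odd indices). Write $N:=a3^n$ and $K:=N-2t-1$. Since $N+K=2N-2t-1$ is odd, Lemma \ref{lem2.1} applies and, after the substitution $i=N-s$, gives
\begin{align*}
s(N,N-2t-1)=\frac12\sum_{s=0}^{2t}(-1)^{s}\,s(N,N-s)\binom{N-s-1}{2t+1-s}N^{\,2t+1-s}.
\end{align*}
The base case $t=0$ is immediate: the sum collapses to the single term $\frac12 s(N,N)\binom{N-1}{1}N=\frac12(N-1)N$, whose $3$-adic valuation is $v_3(N-1)+v_3(N)=0+n$, in agreement with (\ref{2.1}) since $v_3(s(N,N))=0$ and $v_3(1)=0$.

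For the inductive step I would single out the term $s=2t$, namely $\frac12 s(N,N-2t)\binom{N-2t-1}{1}N=\frac12(N-2t-1)N\,s(N,N-2t)$. Because $2$ is a $3$-adic unit and $N-2t-1\equiv-(2t+1)\pmod{3^{n}}$ forces $v_3(N-2t-1)=v_3(2t+1)$ (which one verifies directly from $2t+1\le a3^{n}-1$), this distinguished term has valuation exactly $v_3(s(N,N-2t))+v_3(2t+1)+n$, precisely the right-hand side of (\ref{2.1}). It therefore suffices to prove that every remaining term, $0\le s\le 2t-1$, has strictly larger $3$-adic valuation, so that the distinguished term alone determines $v_3$ of the whole sum (the factor $\frac12$ being a unit and no cancellation affecting the minimal term).

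To estimate a generic term I would set $f(s):=v_3(s(N,N-s))$ and study
\begin{align*}
V(s):=f(s)+v_3\binom{N-s-1}{2t+1-s}+(2t+1-s)n.
\end{align*}
For even $s$ the value of $f(s)$ is supplied directly by (\ref{1.4}) (writing $N-s=a3^{m}-k$ with $(m,k)\in T_{a,n}$), while for odd $s<2t$ the inductive hypothesis rewrites $f(s)$ through the neighbouring even index plus a term of the form $v_3(\cdot)+n$. Since Stirling numbers and binomial coefficients are integers, $f(s)\ge0$ and $v_3\binom{N-s-1}{2t+1-s}\ge0$, and the factor $N^{2t+1-s}$ contributes the linear gain $(2t+1-s)n$. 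The goal reduces to the monotonicity estimate that the growth of $f$ in $s$ is outpaced by this linear gain; concretely, one must verify $f(2t)-f(s)<(2t-s)n+v_3\binom{N-s-1}{2t+1-s}-v_3(2t+1)$ for all $s\le 2t-1$.

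I expect this last inequality to be the main obstacle. The difficulty is that $f$ is not small: as $s$ increases and the second argument $N-s=a3^{m}-k$ falls into blocks with smaller $m$, the leading term $\frac{a}{2}(3^{n}-3^{m})$ in (\ref{1.4}) makes $f(s)$ as large as order $3^{n}$, so a crude bound $f(s)\ge0$ cannot work. Instead I would track the exact block structure of (\ref{1.4}): within a block of fixed $m$ the valuation $f$ increases by essentially $n-m<n$ per unit step of $s$ (up to the bounded fluctuation of the terms $-v_3(\lfloor k/2\rfloor)$ and $(m+v_3(k))\epsilon_k$), while each cross-block jump coming from $\frac{a}{2}(3^{n}-3^{m})$ is of size $a3^{m-1}$ and must be amortised against the roughly $2a3^{m-1}$ steps in that block, each carrying a gain of $n$ from $N^{2t+1-s}$. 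Establishing that the per-step increase of $f$ never reaches the per-step gain $n$, with enough strict deficit to absorb the $-v_3(2t+1)$ defect, is the delicate point, and it is exactly here that the detailed $3$-adic analysis and the precise exponents in (\ref{1.4}) are indispensable.
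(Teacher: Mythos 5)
Your setup coincides exactly with the paper's: induction on $t$, the Lemma \ref{lem2.1} expansion (your sum over $s=N-i$ is the paper's (\ref{2.2})), isolation of the distinguished term $s=2t$ with valuation $v_3(s(N,N-2t))+v_3(2t+1)+n$, reduction of odd $s$ to even $s$ via the inductive hypothesis, and the ultrametric inequality to conclude. The base case and the identity $v_3(N-2t-1)=v_3(2t+1)$ are also handled correctly. However, the proof is not complete: the decisive step --- showing that every term with $0\le s\le 2t-1$ has valuation at least $v_3(s(N,N-2t))+v_3(2t+1)+n+1$, which is the paper's Claim (\ref{2.3}) and its reduction (\ref{2.6}) --- is only announced as ``the main obstacle'' together with a heuristic amortisation argument, and is never established. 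That estimate is the entire mathematical content of the lemma beyond bookkeeping, so as written the proposal is a correct strategy with a genuine gap at its core.

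Concretely, what is missing is the following computation, which the paper carries out. Write $a3^n-2t=a3^m-k$ with $(m,k)\in T_{a,n}$, $2\mid k$, and for an even index $i=a3^n-2t+2r=a3^l-j$ with $(l,j)\in T_{a,n}$, $2\mid j$, use (\ref{1.4}) twice to evaluate the exact difference
\begin{align*}
D_l:=v_3(L_i)-v_3(s(a3^n,a3^n-2t))-v_3(2t+1)
=\Big(l-\tfrac12\Big)a3^l-\Big(m-\tfrac12\Big)a3^m-lj+l+mk-m+v_3(k)-v_3(j)-v_3(2t+1),
\end{align*}
and then check $D_l\ge 1$ separately for $l=m$ (where $k-j\ge 2$, $v_3(j)\le m-1$ and $v_3(2t+1)=v_3(k+1)\le m$ give $D_m\ge 2m-(m-1)-m=1$) and for $m+1\le l\le n$ (where $j\le 2a3^{l-1}$ gives $D_l\ge(l-\tfrac32)a3^{l-1}-(m-\tfrac12)a3^m+1\ge 1$); the boundary case $i=a3^n$ needs a separate line. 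Your per-step heuristic does not substitute for this: the inequality must absorb the defect $-v_3(2t+1)$, which can be as large as $m$, and this is recovered not from a step-by-step monotonicity of $f$ but from the global term $(k-1)m$ (respectively $mk-lj$) in the exact difference, together with the a priori bound $v_3(2t+1)=v_3(k+1)\le m$. Moreover your description of the cross-block contribution has the wrong sign to be amortised naively: since $i>a3^n-2t$ forces $l\ge m$, the leading terms contribute $\tfrac{a}{2}(3^m-3^l)\le 0$ to $f(s)-f(2t)$, and it is the factor $n(i-(a3^n-2t))$ from $N^{2t+1-s}$ compared against $(l-\tfrac12)a3^l$ that saves the day --- an exact computation, not a monotonicity estimate, is what closes the argument.
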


\begin{proof}
We prove (\ref{2.1}) by induction on $t$. The case $t=0$ is immediate, 
since
\[
v_3(s(a3^n,a3^n-1))=n=v_3(s(a3^n,a3^n))+v_3(1)+n.
\]
Now let $1\le t\le \frac{a3^n-2}{2}$.
Assume that (\ref{2.1}) holds for any nonnegative integer $e$ with $e\le t-1$.
We show that (\ref{2.1}) is still true for $t$. 

The idea is to isolate the principal contribution to $s(a3^{n},a3^n-2t-1)$. 
By Lemma \ref{lem2.1}, we have 
\begin{align}
s(a3^{n},a3^n-2t-1)
&=\frac{1}{2}\sum_{i=a3^n-2t}^{a3^{n}}(-1)^{a3^{n}-i}s(a3^{n},i)
\binom{i-1}{i-a3^n+2t+1}(a3^{n})^{i-a3^n+2t+1}\notag\\
&=\frac{a3^{n}}{2}s(a3^{n},a3^n-2t)(a3^n-2t-1) + \frac{a3^{n}}{2}L,\label{2.2}
\end{align}
where
\begin{align*}
L=\sum_{i=a3^n-2t+1}^{a3^{n}}(-1)^{a3^{n}-i}\binom{i-1}{i-a3^n+2t+1}L_i
\end{align*}
with $L_i:=s(a3^n,i)(a3^{n})^{i-a3^n+2t}$. 

Thus the first term on the right-hand side of (\ref{2.2}) is the desired 
principal term. It remains to show that the term involving $L$ has strictly 
larger 3-adic valuation. We shall prove the following estimate: 
\begin{align}
v_3(L_i)\ge v_3(s(a3^n,a3^n-2t))+v_3(2t+1)+1 \label{2.3}
\end{align}
for every integer $i$ with $a3^n-2t+1\le i\le a3^n$. If (\ref{2.3}) holds, 
then the isosceles triangle principle (see, for example, \cite{[K]}) gives 
\begin{align}
v_3(L)\ge v_3(s(a3^n,a3^n-2t))+v_3(2t+1)+1
&=v_3(s(a3^n,a3^n-2t))+v_3(a3^n-2t-1)+1\label{2.4}
\end{align}
since $3\le 2t+1\le a3^n-1$. 
Note that $v_3( \frac{a3^{n}}{2})=n$. 
Hence (\ref{2.1}) follows immediately from (\ref{2.2}) to (\ref{2.4}).
So to finish the proof of Lemma \ref{lem2.2},
it remains to prove (\ref{2.3}).
This will be done in what follows.

We first reduce the estimate for odd indices $i$ to the corresponding 
estimate for even indices. Every integer $i$ with $a3^n-2t+1\le i\le a3^n$ 
can be written in one of the two forms 
\[
i=a3^n-2t+2r\quad \text{or}\quad i=a3^n-2t+2r-1
\]
for some integer $r$ with $1\le r\le t$. By the induction hypothesis, 
for $1\le r\le t$,  
\begin{align}\label{2.5}
v_3(s(a3^n,a3^n-2t+2r-1))
&= v_3(s(a3^n,a3^n-2(t-r)-1))\notag\\
&=v_3(s(a3^n,a3^n-2(t-r)))+v_3(2(t-r)+1)+n.
\end{align}
Hence it is enough to prove that 
\begin{align}
v_3(L_{a3^n-2t+2r})
=v_3(s(a3^n,a3^n-2t+2r))+2rn
&\ge  v_3(s(a3^n,a3^n-2t))+v_3(2t+1)+1 \label{2.6}
\end{align}
for every $1\le r\le t$. Indeed, once (\ref{2.6}) holds, (\ref{2.5}) gives 
\begin{align*}
v_3(L_{a3^n-2t+2r-1})
&=v_3(s(a3^n,a3^n-2t+2r-1))+(2r-1)n\\
&=v_3(s(a3^n,a3^n-2t+2r))+2rn+v_3(2(t-r)+1)\\
&\ge  v_3(s(a3^n,a3^n-2t))+v_3(2t+1)+1.
\end{align*}
Thus (\ref{2.3}) follows from (\ref{2.6}).

We now prove (\ref{2.6}). Since $1\le t\le \frac{a3^n-2}{2}$, we have
$2\le a3^n-2t\le a3^n-2$. Write $a3^n-2t=a3^m-k$, where
$(m, k)\in T_{a, n}$ and $k$ is even. 
Note that $2t\le a3^n-2$ implies $k\le a3^m-2$.
Together with $n\ge m$ and $2\le k\le 2a3^{m-1}$, we obtain 
\[
v_3(2t+1)=v_3(a3^n-a3^m+k+1)=v_3(k+1)\le m.
\]
By the assumed even case of (\ref{1.4}), we get 
\begin{align}\label{2.7}
&v_3(s(a3^n,a3^m-k))
=\frac{a}{2}(3^n-3^m)-(n-m)(a3^m-k)+m-1-v_3(k).
\end{align}
It remains to compare $v_3(L_{a3^n-2t+2r})$ with 
$v_3(s(a3^n,a3^n-2t))+v_3(2t+1)$. 
Note that $a3^n-2t+2\le a3^n-2t+2r\le a3^n$. 
Consider the following two cases.

If $a3^n-2t+2r=a3^n$, then $2r=2t=a3^n-a3^m+k$.
It follows from (\ref{2.7}) that
\begin{align*}
&v_3(L_{a3^n})-v_3(s(a3^n,a3^n-2t))-v_3(2t+1)
=2rn-v_3(s(a3^n, a3^m-k))-v_3(2t+1)\\
&=n(a3^n-a3^m+k)-\frac{a}{2}(3^n-3^m)+(n-m)(a3^m-k)-m+1+v_3(k)-v_3(2t+1)\\
&=\Big(n-\frac{1}{2}\Big)a3^n-\Big(m-\frac{1}{2}\Big)a3^m+(k-1)m+v_3(k)-v_3(2t+1)+1\\
&\ge (k-1)m-v_3(2t+1)+1\ge m-m+1=1.
\end{align*}
Thus (\ref{2.6}) holds in this case.

If $a3^n-2t+2\le a3^n-2t+2r\le a3^n-2$, then we can write
$a3^n-2t+2r=a3^l-j$ for $(l, j)\in T_{a, n}$ with $2\mid j$.
Then $m\le l\le n$, and if $l=m$ then $k-j\ge 2$. 
By (\ref{2.7}) and the assumed even case of (\ref{1.4}), we obtain 
\begin{align*}
&v_3(L_{a3^n-2t+2r})-v_3(s(a3^n,a3^n-2t))-v_3(2t+1)\\
&=v_3(s(a3^n,a3^l-j))+n(a3^l-j-a3^m+k)-v_3(s(a3^n,a3^m-k))-v_3(2t+1)\\
&=\Big(l-\frac{1}{2}\Big)a3^l-\Big(m-\frac{1}{2}\Big)a3^m-lj+l+mk-m+v_3(k)-v_3(j)-v_3(2t+1)=:D_l.
\end{align*} 
It remains to prove $D_l\ge 1$ for $m\le l\le n$. 
If $l=m$, then $2\le j\le 2a3^{m-1}$, and so $v_3(j)\le m-1$. Thus
\[
D_m=m(k-j)+v_3(k)-v_3(j)-v_3(2t+1)\ge 2m-(m-1)-m=1.
\]
If $m+1\le l\le n$, then $2\le j\le 2a3^{l-1}$ and
\begin{align*}
D_l&\ge \Big(l-\frac{1}{2}\Big)a3^l-\Big(m-\frac{1}{2}\Big)a3^m-2la3^{l-1}+l+m-(l-1)-m\\
&=\Big(l-\frac{3}{2}\Big)a3^{l-1}-\Big(m-\frac{1}{2}\Big)a3^m+1\ge 1.
\end{align*}
Thus (\ref{2.6}) holds in all cases. 
Consequently, (\ref{2.3}) is proved, and the induction step follows. 
This completes the proof of Lemma \ref{lem2.2}.
\end{proof}

We next show how Lemma \ref{lem2.2} 
converts the even case of (\ref{1.4}) into the odd case. 
If $k$ is odd, then $k-1$ is even, and the index $a3^m-k$ can be 
written in the form $a3^n-2t-1$. Lemma \ref{lem2.2} then 
relates its valuation to that of the adjacent even index $a3^m-k+1$.


\begin{lem}\label{lem2.3}
Let $a\in\{1,2\}$, and let $n$ be a positive integer.  
Assume that \eqref{1.4} holds for every pair $(m, k)\in T_{a, n}$ with $k$ even. 
Then \eqref{1.4} also holds for every pair $(m, k)\in T_{a, n}$ with $k$ odd.
\end{lem}

\begin{proof}
Let $(m, k)\in T_{a, n}$ with $k$ odd. 
Then $3\le k\le 2a3^{m-1}+1$, $k<a3^m$, and $k-1$ is even. 
Hence $2\le k-1\le 2a3^{m-1}$, and so $(m, k-1)\in T_{a,n}$. 
By the assumed even case of (\ref{1.4}), we have
\begin{align}\label{2.8}
v_3(s(a3^n, a3^m-(k-1)))
=\frac{a}{2}(3^n-3^m)-(n-m)(a3^m-(k-1))+m-1-v_3\Big(\frac{k-1}{2}\Big).
\end{align}
Note that  $a3^m-k=a3^n-2\cdot \frac{a3^n-a3^m+k-1}{2}-1$.
It follows from Lemma \ref{lem2.2} that
\begin{align}\label{2.9}
v_3(s(a3^n, a3^m-k))
&=v_3(s(a3^n, a3^m-k+1))+v_3(a3^n-a3^m+k)+n\notag\\
&=v_3(s(a3^n, a3^m-k+1))+v_3(k)+n
\end{align}
since $n\ge m$ and $3\le k< a3^m$.
By (\ref{2.8}), (\ref{2.9}) and $\left\lfloor\frac{k}{2}\right\rfloor=\frac{k-1}{2}$ one derives that
\begin{align*}
v_3(s(a3^n, a3^m-k))
&=\frac{a}{2}(3^n-3^m)-(n-m)(a3^m-(k-1))+m-1-v_3\Big(\frac{k-1}{2}\Big)+v_3(k)+n\\
&=\frac{a}{2}(3^n-3^m)-(n-m)(a3^m-k)-n+m+m-1-v_3\Big(\Big\lfloor\frac{k}{2}\Big\rfloor\Big)+v_3(k)+n\\
&=\frac{a}{2}(3^n-3^m)-(n-m)(a3^m-k)+m-1-v_3\Big(\Big\lfloor\frac{k}{2}\Big\rfloor\Big)+m+v_3(k).
\end{align*}
Hence (\ref{1.4}) holds for $k$.
This finishes the proof of Lemma \ref{lem2.3}.
\end{proof}

The preceding lemma completes the reduction. 
Lemma \ref{lem2.2} expresses the valuation of the 
odd-indexed term in terms of the adjacent even-indexed 
term, while Lemma \ref{lem2.3} verifies that substituting 
the even formula gives exactly the odd formula. 
Therefore, in the proof of Theorem \ref{thm1}, it remains only to  
establish formula (\ref{1.4}) for even values of $k$.

\subsection{Auxiliary estimates under the induction hypothesis}

In this subsection, we collect several estimates that will be 
used under the assumption that formula (\ref{1.4}) has already 
been established for the corresponding value of $a$. 
 
We first recall the $m$-th Stirling numbers of the first kind. 
These numbers may be regarded as shifted analogues of 
the ordinary Stirling numbers of the first kind and will be 
used to compare Stirling numbers with shifted arguments 
in the inductive step. They are defined by (see \cite{[QH]})
\begin{align*}
(x+m)(x+m+1)\cdots(x+m+n-1) = \sum_{k=0}^{n} s_m(n, k)x^k
\end{align*}
with $n\ge 1$, $m$ and $k$ being nonnegative integers.

\begin{lem}[\cite{[QH]}]\label{lem2.4}
Let $n\ge 1$, and let $m$ and $k$ be nonnegative integers. Then
\[  
s(m+n, k)=\sum_{i=0}^{k}s(m,i)s_m(n, k-i)\quad\text{and}\quad
s_m(n, k)=\sum_{i=k}^{n}s(n,i)\binom{i}{i-k}m^{i-k}.
\]
\end{lem}

These identities allow us to compare $s_m(n, k)$ with $s(n, k)$.  
For instance, Lemma \ref{lem2.4} gives $s_m(n, k)\equiv s(n, k) \pmod m$ for all $m\ge 1$. 
In the proof of Theorem \ref{thm1}, we need a sharper comparison for  
the $3^n$-th Stirling numbers. The next lemma provides such estimates. 
It is stated in a conditional form and will be used only 
under the relevant induction hypotheses in Sections 3 and 4. 


\begin{lem}\label{lem2.6}
Let $a\in\{1,2\}$, and let $n, t\in \mathbb{Z}^+$ with $t\le a3^n$. 
Assume that \eqref{1.4} holds for this value of $a$. If $2\mid (t-a)$, then
\begin{align}\label{2.10}
v_3(s_{3^n}(a3^n,t))=v_3(s(a3^n,t))\quad 
\text{and}\quad v_3(s_{3^n}(a3^n,t)-s(a3^n,t))\ge v_3(s(a3^n,t))+2.
\end{align}
If $2\nmid (t-a)$, then 
\begin{align}
v_3(s_{3^n}(a3^n,t))\ge v_3(s(a3^n,t+1))+n. \label{2.12}
\end{align}
\end{lem}

\begin{proof}
The proof is based on the expansion in Lemma \ref{lem2.4}. 
We first treat the boundary values $t=a3^n$ and $t=a3^n-1$. 
Then, for $1\le t\le a3^n-2$, we write $t=a3^m-k$ with $(m,k)\in T_{a,n}$, 
and distinguish the two cases according to the parity of $t-a$.  

If $t=a3^n$, then $2\mid (a3^n-a)$ and
$s_{3^n}(a3^n,a3^n)=1=s(a3^n,a3^n)$. Hence (\ref{2.10}) holds. 

If $t=a3^n-1$, then $2\nmid (t-a)$ and, by Lemma \ref{lem2.4} we have
\begin{align*}
s_{3^n}(a3^n,a3^n-1)=s(a3^n,a3^n-1)+s(a3^n,a3^n)3^n\binom{a3^n}{1}=s(a3^n,a3^n-1)+a3^{2n}.
\end{align*}
Thus $v_3(s_{3^n}(a3^n,a3^n-1))=v_3(s(a3^n,a3^n-1))=n=v_3(s(a3^n,a3^n))+n$,
so (\ref{2.12}) holds when $t=a3^n-1$.

It remains to consider the case $1\le t\le a3^n-2$. Then write
$t=a3^m-k$ with $(m, k)\in T_{a, n}$. By Lemma \ref{lem2.4}, 
\begin{align}\label{2.13}
s_{3^n}(a3^n, t)=s(a3^n, t)+\sum_{i=t+1}^{a3^n}\binom{i}{i-t}L_i
\end{align}
with $L_i:=s(a3^n,i)3^{n(i-t)}$. 
We now distinguish two cases according to the parity of $t-a$. 

\smallskip
\noindent\emph{Case 1: $2\mid (t-a)$.}

In this case, we prove (\ref{2.10}). Since $2\mid (t-a)$, 
we have $2\mid k$, and hence $2\le k\le 2a3^{m-1}$. By (\ref{1.4}), 
\begin{align}\label{2.14}
v_3(s(a3^n,a3^m-k))
=\frac{a}{2}(3^n-3^m)-(n-m)(a3^m-k)+m-1-v_3(k).
\end{align}

By (\ref{2.13}), it is enough to show that the summation term in 
(\ref{2.13}) has 3-adic valuation at least $v_3(s(a3^n, t))+2$. 
Equivalently, it suffices to prove that 
\begin{align}\label{2.15}
v_3(L_i)\ge v_3(s(a3^n,t))+2
\end{align}
for every $i$ with $t+1\le i\le a3^n$.  

We now prove (\ref{2.15}). First, let $i\in \{ a3^n-1, a3^n\}$. Then 
\[
v_3(L_i)=n(a3^n-t)=n(a3^n-a3^m+k).
\]
Therefore, by (\ref{2.14}), $1\le m\le n$ and $k\ge 2$, we obtain 
\begin{align*}
v_3(L_{i})-v_3(s(a3^n, t))
&=n(a3^n-a3^m+k)-v_3(s(a3^n, a3^m-k))\\
&=\Big(n-\frac{1}{2}\Big)a3^n-\Big(m-\frac{1}{2}\Big)a3^m+(k-1)m+1+v_3(k)\ge m+1\ge 2.
\end{align*}
Hence (\ref{2.15}) holds for $i\in \{ a3^n-1, a3^n\}$. 

Next, let $t+1\le i\le a3^n-2$. Write $i=a3^l-j$ with $(l, j)\in T_{a, n}$.
Then $l\ge m$ and $2\le j\le k-1$ if $l=m$.
By (\ref{1.4}) and (\ref{2.14}), we get 
\begin{align*}
v_3(L_{i})-v_3(s(a3^n, t))
&=v_3(s(a3^n, a3^l-j))+n(a3^l-j-a3^m+k)-v_3(s(a3^n, a3^m-k))\\
&=\frac{a}{2}(3^n-3^l)-(n-l)(a3^l-j)+l-1
-v_3\Big(\Big\lfloor\frac{j}{2}\Big\rfloor\Big)+(l+v_3(j))\epsilon_j\\
&\ \ +n(a3^l-j-a3^m+k)-\frac{a}{2}(3^n-3^m)+(n-m)(a3^m-k)-m+1+v_3(k)\\
&=\Big(l-\frac{1}{2}\Big)a3^l-\Big(m-\frac{1}{2}\Big)a3^m+l-lj+mk-m+
v_3(k)-v_3\Big(\Big\lfloor\frac{j}{2}\Big\rfloor\Big)+(l+v_3(j))\epsilon_j
=:D_j.
\end{align*}
It remains to prove $D_j\ge 2$ for $2\le j\le 2a3^{l-1}+1$. 

If $j$ is even, then $2\le j\le 2a3^{l-1}$, and hence $v_3(j)\le l-1$. 
In this case, 
\[
D_j=\Big(l-\frac{1}{2}\Big)a3^l-\Big(m-\frac{1}{2}\Big)a3^m-l(j-1)+m(k-1)+v_3(k)-v_3(j).
\]
If $l=m$, then $k-j\ge 2$ since $k-j\ge 1$ and both $k$ and $j$ are even.
Thus
\begin{align*}
D_j=m(k-j)+v_3(k)-v_3(j)\ge 2m-(m-1)=m+1\ge 2.
\end{align*}
If $l\ge m+1$, then 
\begin{align*}
D_j&\ge \Big(l-\frac{1}{2}\Big)a3^l-\Big(m-\frac{1}{2}\Big)a3^m-l(2a3^{l-1}-1)+m-(l-1)\notag\\
&=\Big(l-\frac{3}{2}\Big)a3^{l-1}-\Big(m-\frac{1}{2}\Big)a3^m+m+1\ge m+1\ge 2.
\end{align*}
Therefore, $D_j\ge 2$ holds when $j$ is even. 

If $j$ is odd, then $3\le j\le 2a3^{l-1}+1$ and $j<a3^l$. Note that $j-1$ is even and 
$2\le j-1\le 2a3^{l-1}$. Using the even case just proved, we obtain 
\begin{align*}
D_j
=\Big(l-\frac{1}{2}\Big)a3^l-\Big(m-\frac{1}{2}\Big)a3^m-l(j-1)+m(k-1)+v_3(k)-v_3(j-1)+l+v_3(j)
&=D_{j-1}+v_3(j)\ge 2.
\end{align*}
Thus $D_j\ge 2$ also holds when $j$ is odd.  

Consequently, (\ref{2.15}) holds for all $i$ with $t+1\le i\le a3^n$.  
Hence the summation term in (\ref{2.13}) has 3-adic valuation 
at least $v_3(s(a3^n, t))+2$. This proves (\ref{2.10}).

\smallskip
\noindent\emph{Case 2: $2\nmid (t-a)$.}

In this case, we have $2\nmid k$, $3\le k\le 2a3^{m-1}+1$ and $k<a3^m$.
Since $2\nmid (t-a)$, we have $2\mid (a3^n-t-1)$. Thus
$t=a3^n-2\cdot \frac{a3^n-t-1}{2}-1$. By Lemma \ref{lem2.2},
\[
v_3(s(a3^n, t))=v_3(s(a3^n, t+1))+v_3(a3^n-t)+n.
\]
In particular, 
\begin{align}\label{2.16}
v_3(s(a3^n, t))\ge v_3(s(a3^n, t+1)) +n.
\end{align}

From (\ref{2.13}), we write 
\begin{align}\label{2.17}
s_{3^n}(a3^n, t)
=s(a3^n, t)+(t+1)3^ns(a3^n, t+1)+\sum_{i=t+2}^{a3^n}\binom{i}{i-t}L_i.
\end{align} 
Since $2\mid (t+1-a)$, the estimate proved in Case 1, 
applied to $t+1$, gives 
\begin{align}\label{2.18}
v_3(L_i)=v_3(s(a3^n,i)3^{n(i-t)})
=v_3(s(a3^n, i)3^{n(i-(t+1))})+n\ge v_3(s(a3^n, t+1))+n+2
\end{align} 
for every $t+2\le i\le a3^n$. Therefore,  
(\ref{2.12}) follows from (\ref{2.16}) to (\ref{2.18}).

The proof of Lemma \ref{lem2.6} is complete.
\end{proof}

In Sections 3 and 4, (\ref{2.10}) will be used to replace the 
$3^n$-th Stirling numbers by the corresponding ordinary Stirling 
numbers up to higher-order error terms, while (\ref{2.12}) will 
be used to control the opposite parity cases. 
We shall also use the following adjacent-index estimate under 
the same inductive assumption. 


\begin{lem}\label{lem2.7}
Let $a\in\{1,2\}$ be fixed, and assume that (\ref{1.4}) holds 
for this value of $a$. Let $n, t\in \mathbb{Z}^+$ with $3\le t\le a3^n$.
If $2\mid (t-a)$, then 
\begin{align}\label{2.19} 
v_3(s(a3^n, t)) - v_3(s(a3^n, t-2))\ge -2n+2.
\end{align}
\end{lem}

\begin{proof}
We first consider the boundary case $t=a3^n$. 
By (\ref{1.4}) with $m=n$ and $k=2$, we have 
$v_3(s(a3^n, a3^n-2)) = n-1$. Since $v_3(s(a3^n, a3^n)) = 0$, 
it follows that 
\[
v_3(s(a3^n, t))-v_3(s(a3^n, t-2))=
v_3(s(a3^n, a3^n))- v_3(s(a3^n, a3^n-2)) = -n+1\ge -2n+2.
\] 
Thus (\ref{2.19}) holds in this case.

Now assume that $3\le t\le a3^n-2$. Write $t = a3^l-j$, where
$1\le l\le n$ and $2\le j\le 2a3^{l-1}$. Since $2\mid (t-a)$, 
we have $2\mid j$. Then, by the assumed formula (\ref{1.4}), we obtain  
\begin{align*}
v_3(s(a3^n, t))=v_3(s(a3^n, a3^l-j))
=\frac{a}{2}(3^n-3^l)-(n-l)(a3^l-j)+l-1-v_3(j).
\end{align*}
Since $t-2=a3^l-j-2=a3^l-(j+2)$, we distinguish two cases. 

If $j+2\le 2a3^{l-1}$, then
\begin{align*}
v_3(s(a3^n, t-2))
=v_3(s(a3^n, a3^l-(j+2)))
&=\frac{a}{2}(3^n-3^l)-(n-l)(a3^l-(j+2))+l-1-v_3(j+2).
\end{align*}
Hence
\begin{align*}
v_3(s(a3^n, t))-v_3(s(a3^n, t-2))&=-2(n-l)+v_3(j+2)-v_3(j)
\ge -2n+2l-(l-1)\ge-2n+l+1\ge -2n+2.
\end{align*}
Therefore, (\ref{2.19}) holds in this case.

If $j+2\ge 2a3^{l-1}+2$, then $j=2a3^{l-1}$, and so 
$t-2=a3^{l-1}-2$. Since $t-2\ge 1$, we have $l\ge 2$. Thus 
\begin{align*}
v_3(s(a3^n, t-2))&=v_3(s(a3^n, a3^{l-1}-2))
=\frac{a}{2}(3^n-3^{l-1})-(n-l+1)(a3^{l-1}-2)+l-2
\end{align*}
and
\[
v_3(s(a3^n, t)) = v_3(s(a3^n, a3^l-2a3^{l-1}))=\frac{a}{2}(3^n-3^l)-(n-l)a3^{l-1}.
\]
Therefore
\begin{align*}
v_3(s(a3^n, t))-v_3(s(a3^n, t-2))=-2-2(n-l)-l+2=-2n+l\ge-2n+2.
\end{align*}
Thus (\ref{2.19}) holds in this case. This completes the proof of Lemma \ref{lem2.7}. 
\end{proof}


\section{An Inductive Step: From $v_3(s(3^n, 3^m-k))$ to $v_3(s(2\cdot3^n, 2\cdot3^m-k))$}

In this section, we establish the first key inductive step in the proof of Theorem \ref{thm1}. 
More precisely, we show that formula (\ref{1.4}) for $a=1$ 
implies the corresponding formula for $a=2$.  
For convenience, throughout this paper, define 
$$s^{(1)}_{n,m,k}:=s(3^n,3^m-k), \ \ s^{(1)*}_{n,m,k}:=s_{3^n}(3^n,3^m-k),$$
$$s^{(2)}_{n,m,k}:=s(2\cdot3^n, 2\cdot3^m-k),\ \ \  s^{(2)*}_{n,m,k}:=s_{3^n}(2\cdot3^n, 2\cdot3^m-k).$$

We first outline the proof strategy. The aim of this section is to 
derive the formula for $s^{(2)}_{n,m,k}$ from the corresponding 
formula $s^{(1)}_{n,m,k}$. By Lemma \ref{lem2.3}, it is enough to treat the 
case where $k$ is even. For such $k$, we use the convolution formula 
from Lemma \ref{lem2.4} to decompose $s^{(2)}_{n,m,k}$ into 
three parts, $s^{(2)}_{n,m,k}=s_1+s_2+s_3$. The set defining $s_1$ contains 
the terms expected to give the exact valuation, 
while $s_2$ and $s_3$ are remainder sums.

For each fixed pair $(m, k)$, we denote by $W$ the value predicted by formula (\ref{1.4}); 
this is made explicit in (\ref{3.1}) below. The proof is then reduced to showing 
$v_3(s_1)=W$, $v_3(s_2)\ge W+1$ and $v_3(s_3)\ge W+2$. The principal part 
$s_1$ is handled by replacing the $3^n$-Stirling numbers $s_{3^n}(3^n, \cdot)$ 
with the corresponding ordinary Stirling numbers $s(3^n, \cdot)$. 
By Lemma \ref{lem2.6}, the replacement error has 3-adic valuation at least 
two larger than the predicted leading value, and hence it cannot 
affect the final 3-adic valuation. The sums $s_2$ and $s_3$ are then 
controlled by separating the relevant indices according to parity 
and applying the auxiliary estimates from Section 2. 

We prove the following lemma.


\begin{lem}\label{lem3.1}
If (\ref{1.4}) holds for $a=1$, then (\ref{1.4}) also holds for $a=2$.
\end{lem}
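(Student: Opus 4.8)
The plan is to derive the $a=2$ case of (\ref{1.4}) from the $a=1$ case by expanding $s(2\cdot3^n,\,\cdot\,)$ through the convolution identity of Lemma \ref{lem2.4} with $m=3^n$. First I would reduce to even $k$: by Lemma \ref{lem2.3} applied with $a=2$ (whose hypotheses, through Lemma \ref{lem2.2}, require only the even-$k$ instances of (\ref{1.4}) for $a=2$), it suffices to prove (\ref{1.4}) for $a=2$ and even $k$, the odd case following at once. So fix an even $k$ with $2\le k\le 4\cdot3^{m-1}$ and put $t:=2\cdot3^m-k$.

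Next, applying Lemma \ref{lem2.4} to the splitting $2\cdot3^n=3^n+3^n$ gives
\[
s(2\cdot3^n,t)=\sum_{i=0}^{t}s(3^n,i)\,s_{3^n}(3^n,t-i),
\]
which expresses $s^{(2)}_{n,m,k}$ as a convolution of the quantities $s^{(1)}$ and $s^{(1)*}$. For each $i$ I would read off $v_3(s(3^n,i))$ from the assumed $a=1$ case of (\ref{1.4}) (together with the boundary data $v_3(s(3^n,3^n))=0$, $v_3(s(3^n,1))=v_3((3^n-1)!)$, $s(3^n,0)=0$, and the Komatsu--Young values \cite{[KY]} for pure powers of $3$), and I would control $v_3(s_{3^n}(3^n,t-i))$ by Lemma \ref{lem2.6} with $a=1$: it equals $v_3(s(3^n,t-i))$ when $t-i$ is odd, and is at least $v_3(s(3^n,t-i+1))+n$ when $t-i$ is even.

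The core of the proof is to locate the split minimizing $v_3(s(3^n,i))+v_3(s_{3^n}(3^n,t-i))$. A direct substitution shows that for $2\le k\le2\cdot3^{m-1}$ the minimizer is the pair $\{i,t-i\}=\{3^m,\,3^m-k\}$, while for $2\cdot3^{m-1}<k\le4\cdot3^{m-1}$ it shifts to $\{3^{m-1},\,3^m-(k-2\cdot3^{m-1})\}$; in both regimes the relevant arguments of $s_{3^n}$ are odd, so feeding the $a=1$ formula (and Komatsu--Young) into the clean branch of Lemma \ref{lem2.6} returns exactly the target value
\[
V=(3^n-3^m)-(n-m)(2\cdot3^m-k)+m-1-v_3\Big(\Big\lfloor\tfrac{k}{2}\Big\rfloor\Big).
\]
When the two arguments are distinct this minimum is attained by a symmetric pair $i\leftrightarrow t-i$; invoking (\ref{2.11}) of Lemma \ref{lem2.6} the two terms sum to $2\,s(3^n,\cdot)\,s(3^n,\cdot)$ up to higher $3$-adic order, and since $v_3(2)=0$ no cancellation occurs, so the pair still contributes valuation exactly $V$.

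It then remains to bound every remaining term strictly above $V$ by a case analysis on the position and parity of $i$, entirely parallel to the estimates $D_l$ and $D_j$ already carried out in Lemmas \ref{lem2.2} and \ref{lem2.6}; the isosceles triangle principle then forces $v_3(s(2\cdot3^n,t))=V$, which is (\ref{1.4}) for $a=2$ and even $k$, and Lemma \ref{lem2.3} supplies the odd case. I expect the domination step to be the main obstacle: the location of the minimizing split depends on the size of $k$ relative to $2\cdot3^{m-1}$ (equivalently on the leading base-$3$ digit of $k$), so the strict inequality must be proved uniformly across these regimes while tracking the interaction between the $a=1$ formula and the two branches of Lemma \ref{lem2.6}, and while ruling out accidental cancellation among the several near-minimal terms.
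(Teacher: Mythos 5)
Your proposal follows essentially the same route as the paper's proof: the same convolution $s(2\cdot3^n,t)=\sum_i s(3^n,i)\,s_{3^n}(3^n,t-i)$ from Lemma \ref{lem2.4}, the same identification of the dominant symmetric pair ($\{3^m,\,3^m-k\}$ versus $\{3^{m-1},\,2\cdot3^m-k-3^{m-1}\}$ according to whether $k\le 2\cdot3^{m-1}$, degenerating to the single index $3^{m-1}$ at $k=4\cdot3^{m-1}$), the same use of the two branches of Lemma \ref{lem2.6} keyed to the parity of $t-i$, the observation $v_3(2)=0$ to rule out cancellation between the paired terms, and the same reduction of odd $k$ to even $k$ via Lemma \ref{lem2.3}. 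The domination step you flag as the main obstacle is indeed where the paper spends most of its effort (its Claims (II) and (III), splitting the remaining indices by parity and running $D$-type estimates exactly as you describe), so your plan is correct and matches the paper's argument.
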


\begin{proof}
Let $n$ be a positive integer.
Suppose that (\ref{1.4}) holds for $a=1$.
Then Lemmas \ref{lem2.2} and \ref{lem2.6} are 
applicable with $a=1$. By Lemma \ref{lem2.3},
it suffices to prove (\ref{1.4}) holds
for all the integers $m$ and even $k$ with $1\le m\le n$ and
$2\le k\le 4\cdot3^{m-1}$.


Let $m, k\in \mathbb{Z}$ with $1\le m\le n$,
$2\le k\le 4\cdot3^{m-1}$ and $2\mid k$.
Since $k$ is even, proving (\ref{1.4}) for $a=2$ is 
equivalent to proving the following identity:
\begin{align}\label{3.1}
v_3(s^{(2)}_{n,m,k})
=3^n-3^m-(n-m)(2\cdot3^m-k)+m-1-v_3(k)=:W.
\end{align}
Setting $(m,n,k)\mapsto(3^{n}, 3^{n}, 2\cdot3^m-k)$ in Lemma \ref{lem2.4}
gives us that
\begin{align*}
s^{(2)}_{n,m,k}
=\sum_{i=1}^{2\cdot3^m-k}s(3^n,i)s_{3^n}(3^n,2\cdot3^m-k-i).
\end{align*}
Now let $g_{1,k}(i):=s(3^n,i)s_{3^n}(3^n,2\cdot3^m-k-i)$. Define
$$
s_1:=\sum_{i\in A}g_{1,k}(i),\quad s_2:=\sum_{i\in B}g_{1,k}(i),\quad s_3:=\sum_{i\in C}g_{1,k}(i),
$$
where
\begin{align*}
A:=\left\{\begin{array}
{ll}\{3^m,3^m-k\}, & {\rm if}\ 2 \leq k \leq 2\cdot3^{m-1},\\
\{3^{m-1},2\cdot3^{m}-k-3^{m-1}\}, &
{\rm if}\ m\ge 2\ {\rm and}\ 2\cdot3^{m-1}+2 \leq k \leq 4\cdot3^{m-1}-2,\\
\{3^{m-1}\}, & {\rm if}\ k=4\cdot3^{m-1}
\end{array}\right.
\end{align*}
and
$$
B:=([1,2\cdot3^m-k]\cap (1+2\mathbb{Z}))\setminus A,
\quad C:=[1,2\cdot3^m-k]\cap (2\mathbb{Z}).
$$
Then $$s^{(2)}_{n,m,k}=s_1+s_2+s_3.$$ 
It remains to prove the following three estimates:
$$v_3(s_1)=W, \qquad  v_3(s_2)\ge W+1, \qquad v_3(s_3)\ge W+2.$$ 

\medskip
\noindent\textbf{Estimate for the principal part $s_1$.}

We prove that $v_3(s_1)=W$. The proof is divided according 
to the three possible forms of $A$. In each case, we replace the terms 
involving $s_{3^n}(3^n, \cdot)$ by the corresponding terms 
involving $s(3^n, \cdot)$ plus error terms. More precisely, 
we write $s_1 = M + E$, where $M$ is the principal 
contribution obtained from the ordinary Stirling numbers $s(3^n, \cdot)$, 
and $E$ is the total error term arising from this replacement.  By Lemma \ref{lem2.6}, 
the error term always satisfies $v_3(E)\ge W+2$. 
Therefore, it remains only to verify that $v_3(M)=W$. 
Then $v_3(s_1)=W$ follows in each case. 

\smallskip
\noindent\emph{Case 1: $2\le k\le 2\cdot3^{m-1}$.} 

In this case, one has $A=\{3^m,3^m-k\}$. Hence
\begin{align*}
s_1=g_{1,k}(3^m)+g_{1,k}(3^m-k)
=s^{(1)}_{n, m, 0}\cdot s^{(1)*}_{n,m,k}+s^{(1)}_{n, m, k}\cdot s^{(1)*}_{n,m,0}.
\end{align*}
By Lemma \ref{lem2.4}, we may
write $s^{(1)*}_{n,m,k}=s^{(1)}_{n, m, k}+L_1$ and
$s^{(1)*}_{n,m,0}=s^{(1)}_{n, m, 0}+L_2$ with $L_1, L_2\in \mathbb{Z}$.
Since Lemma \ref{lem2.6} holds for $a=1$ and $k$ is even,
we have $v_3(L_1)\ge v_3(s^{(1)}_{n, m, k})+2$
and $v_3(L_2)\ge v_3(s^{(1)}_{n, m, 0})+2$.

Thus $s_1 = M + E$, where 
\[
M:=2s^{(1)}_{n, m, 0}\cdot s^{(1)}_{n, m, k},\quad  
E:=s^{(1)}_{n, m, 0}L_1+s^{(1)}_{n, m, k}L_2.
\]
It remains to compute $v_3(M)$. 
Note that $v_3(s^{(1)}_{n, m, 0})=\frac{1}{2}(3^n-3^{m})-(n-m)3^{m}$. 
Indeed, this is clear when $m=n$, and for $1\le m\le n-1$ it 
follows from $v_3(s^{(1)}_{n, m, 0})=v_3(s^{(1)}_{n, m+1, 2\cdot3^m})
=\frac{1}{2}(3^n-3^{m})-(n-m)3^{m}$. 
Therefore, by (\ref{1.4}) for $a=1$, we obtain 
\begin{align*}
v_3(M)&=v_3(2s^{(1)}_{n, m, 0}\cdot s^{(1)}_{n, m, k})
=v_3(s^{(1)}_{n, m, 0})+v_3(s^{(1)}_{n, m, k})
=v_3(g_{1,k}(3^m))=v_3(g_{1,k}(3^m-k))\\
&=\frac{1}{2}(3^n-3^{m})-(n-m)3^{m}+\frac{1}{2}(3^n-3^m)-(n-m)(3^m-k)+m-1-v_3(k)\\
&=3^n-3^m-(n-m)(2\cdot3^m-k)+m-1-v_3(k)=W.
\end{align*}

Since $s_1 = M+E$, $v_3(M)=W$ and $v_3(E)\ge W+2$, we have $v_3(s_1)=W$.

\smallskip
\noindent\emph{Case 2: $m\ge 2$ and $2+2\cdot3^{m-1}\le k\le 4\cdot3^{m-1}-2$.} 

In this case, $A=\{3^{m-1},2\cdot3^m-k-3^{m-1}\}$. Note that
$2\cdot3^m-k-3^{m-1}=3^m-(k-2\cdot3^{m-1})$ where 
$2\le k-2\cdot3^{m-1}\le 2\cdot3^{m-1}-2$, which implies that
$v_3(k-2\cdot3^{m-1})=v_3(k)$. Hence
\begin{align*}
s_1
=g_{1,k}(3^{m-1})+g_{1,k}(2\cdot3^m-k-3^{m-1})
&=s^{(1)}_{n, m-1, 0}\cdot s^{(1)*}_{n, m, k-2\cdot3^{m-1}}
+s^{(1)}_{n, m, k-2\cdot3^{m-1}}\cdot s^{(1)*}_{n, m-1, 0}.
\end{align*}

Again, write
$s^{(1)*}_{n, m, k-2\cdot3^{m-1}}=s^{(1)}_{n, m, k-2\cdot3^{m-1}}+L_3$
and $s^{(1)*}_{n, m-1, 0}=s^{(1)}_{n, m-1, 0}+L_4$ with $L_3, L_4\in \mathbb{Z}$.
Then $v_3(L_3)\ge v_3(s^{(1)}_{n, m, k-2\cdot3^{m-1}})+2$
and $v_3(L_4)\ge v_3(s^{(1)}_{n, m-1, 0})+2$. Thus 
$s_1 = M + E$, where 
$$M:=2s^{(1)}_{n, m-1, 0}\cdot s^{(1)}_{n, m, k-2\cdot3^{m-1}},\ \  
E:=s^{(1)}_{n, m-1, 0}L_3+s^{(1)}_{n, m, k-2\cdot3^{m-1}}L_4.$$ 

By (\ref{1.4}) for $a=1$, we have 
\begin{align*}
v_3(M)& =v_3(2s^{(1)}_{n, m-1, 0}\cdot s^{(1)}_{n, m, k-2\cdot3^{m-1}})
=v_3(s^{(1)}_{n, m, 2\cdot3^{m-1}})+v_3(s^{(1)}_{n, m, k-2\cdot3^{m-1}})\\
&=v_3(g_{1,k}(3^{m-1}))=v_3(g_{1,k}(2\cdot3^m-k-3^{m-1}))\\
&=\frac{1}{2}(3^n-3^{m})-(n-m)(3^{m}-2\cdot3^{m-1})\\
&\ \ \ \ +\frac{1}{2}(3^n-3^m)-(n-m)(3^m-(k-2\cdot3^{m-1}))
+m-1-v_3(k-2\cdot3^{m-1})\\
&=3^n-3^m-(n-m)(2\cdot3^m-k)+m-1-v_3(k)=W.
\end{align*}
Since $s_1 = M+E$, $v_3(M)=W$ and $v_3(E)\ge W+2$, we have $v_3(s_1)=W$.  

\smallskip
\noindent\emph{Case 3: $k=4\cdot3^{m-1}$.} 

In this case, $A=\{3^{m-1}\}$ and
\begin{align*}
s_1=g_{1, k}(3^{m-1})=s^{(1)}_{n, m-1, 0}\cdot s^{(1)*}_{n, m-1, 0}
=(s^{(1)}_{n, m-1, 0})^2+s^{(1)}_{n, m-1, 0}L_4
\end{align*}
with $v_3(L_4)\ge v_3(s^{(1)}_{n, m-1, 0})+2$.
Therefore, we have $s_1 = M + E$, where 
$$M:=(s^{(1)}_{n, m-1, 0})^2,\ \  
E:=s^{(1)}_{n, m-1, 0}L_4$$ and   
\begin{align*}
v_3(M)&=v_3(g_{1, k}(3^{m-1}))
=v_3((s^{(1)}_{n, m-1, 0})^2)
=2v_3(s^{(1)}_{n, m, 2\cdot3^{m-1}})
=3^n-3^m-2(n-m)(3^m-2\cdot3^{m-1})\\
&=3^n-3^m-(n-m)(2\cdot3^m-4\cdot3^{m-1})+m-1-v_3(4\cdot3^{m-1})=W. 
\end{align*}
Since $s_1 = M+E$, $v_3(M)=W$ and $v_3(E)\ge W+2$, we have $v_3(s_1)=W$. 

The three cases above exhaust all possible forms of $A$. Therefore, $v_3(s_1)=W$. 
This proves the estimate for the principal part $s_1$. 


\medskip
\noindent\textbf{Estimate for the odd-index part $s_2$}

We next estimate the odd-index part $s_2$. 
The purpose of this part is twofold: first, to prove $v_3(s_2)\ge W+1$; 
and second, to identify the possible terms in $s_2$ whose 3-adic valuation is exactly $W+1$. 

If $m=1$, then $B=\varnothing$, hence $s_2=0$. Thus we may 
assume that $2\le m\le n$. For $i\in B$, if either $i>3^n$ or $2\cdot3^m-k-i>3^n$, 
then $g_{1,k}(i)=0$. Therefore, it suffices to consider $B'=B\cap[2\cdot3^m-k-3^n,3^n]$. 
If $B'=\varnothing$, then $s_2=0$, and there is nothing to prove.

Assume henceforth that $B' \ne \varnothing$. We decompose $B'$ as 
$B' := B_{1}\cup B_{2}\cup B_{3}$, where
\begin{align*}
B_{1}:=\Big[1,3^m-\frac{k}{2}-1\Big]\cap B',\quad
B_{2}:=\Big[3^m-\frac{k}{2}+1,2\cdot3^m-k\Big]\cap B',\quad
B_{3}:=\Big\{3^m-\frac{k}{2}\Big\}\cap B'.
\end{align*}
Since at least one of $B_{1},B_{2}$ and $B_{3}$ is nonempty,
one obtains that
$$
s_2=\sum_{i\in B'}g_{1,k}(i)=\Big(\sum_{i\in B_{1}}
+\sum_{i\in B_{2}}+\sum_{i\in B_{3}}\Big)g_{1,k}(i).
$$

\smallskip
\noindent\emph{The central term $B_3$.}

We first estimate the possible central term $B_3$. 
If $B_{3}\ne \varnothing$, then let $i\in B_{3}$. So $i=3^m-\frac{k}{2}$.
Since $i$ is odd, one has $4\mid k$. Thus
\begin{align*}
v_3(g_{1,k}(3^m-\frac{k}{2}))&=v_3(s^{(1)}_{n, m, \frac{k}{2}}\cdot s^{(1)*}_{n, m, \frac{k}{2}})
=v_3((s^{(1)}_{n, m, \frac{k}{2}})^2)\\
&=3^n-3^m-(n-m)(2\cdot3^m-k)+2(m-1-v_3(k))
=W+m-1-v_3(k).
\end{align*}

By $2\le k\le 4\cdot3^{m-1}$ and $4\mid k$, one has
$v_3(k)\le m-1$. Thus $v_3(g_{1,k}(3^m-\frac{k}{2}))\ge W$
with the equality if and only if $k=4\cdot3^{m-1}$.
If $k=4\cdot3^{m-1}$, then $i=3^{m-1}$, this
contradicts the definition of $B$ when $k=4\cdot3^{m-1}$.
Therefore $v_3(g_{1,k}(3^m-\frac{k}{2}))\ge W+1$.
Further, $v_3(g_{1,k}(3^m-\frac{k}{2}))= W+1$ holds
if and only if $k=4\cdot3^{m-2}$ or $k=8\cdot3^{m-2}$, i.e.,
\begin{align}
v_3(g_{1,4\cdot3^{m-2}}(3^m-2\cdot3^{m-2}))
&=v_3(s^{(1)}_{n, m, 2\cdot3^{m-2}}\cdot s^{(1)*}_{n, m, 2\cdot3^{m-2}})
=v_3((s^{(1)}_{n, m, 2\cdot3^{m-2}})^2)=W+1,\label{3.3}\\
v_3(g_{1,8\cdot3^{m-2}}(3^m-4\cdot3^{m-2}))
&=v_3(s^{(1)}_{n, m, 4\cdot3^{m-2}}\cdot s^{(1)*}_{n, m, 4\cdot3^{m-2}})
=v_3((s^{(1)}_{n, m, 4\cdot3^{m-2}})^2)=W+1.\label{3.4}
\end{align}

Thus every term indexed by $B_3$ has 3-adic valuation at least $W+1$. 
Moreover, equality can occur only in the case listed in (\ref{3.3}) and (\ref{3.4}). 
All other terms indexed by $B_3$ have valuation at least $W+2$.

\smallskip
\noindent\emph{The paired terms $B_1$ and $B_2$.}

We now consider the terms indexed by $B_1$ and $B_2$. The two 
parts are symmetric with respect to the involution $i \mapsto 2\cdot 3^m-k-i$. 
Indeed, $i\in B_{1}$ if and only if $2\cdot 3^m-k-i\in B_{2}$.
Moreover, by Lemma \ref{lem2.6}, the two corresponding terms 
have the same 3-adic valuation: 
\begin{align*}
v_3(g_{1,k}(i))&=v_3(s(3^n,i)s_{3^n}(3^n,2\cdot3^{m}-k-i))
=v_3(s(3^n,i)s(3^n,2\cdot3^{m}-k-i))\\
&=v_3(s(3^n,2\cdot3^{m}-k-i)s_{3^n}(3^n,i))
=v_3(g_{1,k}(2\cdot3^{m}-k-i)).
\end{align*}
Hence it is enough to prove $v_3(g_{1,k}(i))\ge W+1$
holds for $i\in B_{1}$. This will be done in what follows.

Let $i\in B_{1}$. Then $1\le i\le 3^m-\frac{k}{2}-1\le 3^m-2$.
Write $i=3^{l_1}-j_1$ with $(l_1, j_1)\in T_{1, m}$ and $2\mid j_1$.
Note that $\frac{k}{2}+1\le j_1\le 2\cdot3^{m-1}$ if $l_1=m$.
Since $1\le l_1\le m\le n$, by (\ref{1.4}) for $a=1$,
\begin{align}\label{3.5}
v_3(s^{(1)}_{n, l_1, j_1})
=\frac{1}{2}(3^n-3^{l_1})-(n-l_1)(3^{l_1}-j_1)+l_1-1-v_3(j_1).
\end{align}
Note that $2\cdot 3^m-k-i \in B_{2}$ and
$3^{m-1}+1\le 3^m-\frac{k}{2}+1\le 2\cdot 3^m-k-i
\le \min\{2\cdot3^m-k-1,3^n\} \le \min\{2\cdot3^m-3, 3^n\}$.

If $2\cdot 3^m-k-i = 3^n$, then we must have $m=n$ and
$i=2\cdot3^m-k-3^n=3^n-k=3^{m}-k$.
It implies that $2+2\cdot3^{m-1}\le k\le 3^{m}-1$
since $i\ge 1$ and $i\ne 3^{m}-k$ when $2\le k\le 2\cdot3^{m-1}$.
Hence $1\le i\le 3^{m-1}-2=3^{n-1}-2$, which implies that $l_1\le n-1$.
Note that $2\le j_1\le 2\cdot3^{l_1-1}\le 2\cdot3^{n-2}$ and
$i=3^{l_1}-j_1=3^n-k$ imply that $v_3(k)=v_3(j_1)$.
From (\ref{3.5}), $s^{(1)*}_{n, n, 0}=1$
and $W=n-1-v_3(k)$ when $m=n$, we deduce that
\begin{align}
v_3(g_{1,k}(i))-W
&=v_3(s(3^n,i))-W=v_3(s^{(1)}_{n, l_1, j_1})-W\notag\\
&=\frac{1}{2}(3^n-3^{l_1})-(n-l_1)3^{l_1}+(n-l_1)(j_1-1)
\ge \frac{1}{2}(3^n-3^{l_1})-(n-l_1)(3^{l_1}-1)\ge 1\notag
\end{align}
with the equality if and only if $l_1=n-1$ and $j_1=2$.
So for $k=2\cdot3^{n-1}+2$, one has
\begin{align}\label{3.6}
v_3(g_{1, k}(3^{n-1}-2))
=v_3(g_{1, k}(3^n))
=v_3(s^{(1)}_{n, n-1, 2}\cdot s^{(1)}_{n, n, 0})=W+1.
\end{align}
For any other integer $k$ in this case, we have $v_3(g_{1,k}(i))\ge W+2$.

If $2\cdot 3^m-k-i \le 3^n-2$, then write $2\cdot3^{m}-k-i=3^{l_2}-j_2$
with $(l_2, j_2)\in T_{1, n}$ and $2\mid j_2$.
Since $3^{m-1}+1\le 2\cdot 3^m-k-i\le \min\{2\cdot3^m-3, 3^n\}$,
we have $m\le l_2\le m+1$. Also note that $2\le j_2\le \frac{k}{2}-1$
if $l_2=m$ and $3^m+3\le j_2\le 2\cdot3^m$ if $l_2=m+1$.
By (\ref{1.4}) for $a=1$,
\begin{align*}
v_3(s^{(1)}_{n, l_2, j_2})
=\frac{1}{2}(3^n-3^{l_2})-(n-l_2)(3^{l_2}-j_2)+l_2-1-v_3(j_2).
\end{align*}
Hence together with (\ref{3.5}) one derives that
\begin{align*}
v_3(g_{1,k}(i))-W
&=v_3(s(3^n,i))+v_3(s(3^n,2\cdot3^{m}-k-i))-W
=v_3(s^{(1)}_{n, l_1, j_1})+v_3(s^{(1)}_{n, l_2, j_2})-W\\
&=3^m-\frac{1}{2}(3^{l_1}+3^{l_2})-(m-l_1)(3^{l_1}-j_1)-(m-l_2)(3^{l_2}-j_2)\\
&\ \ \ \ \ \ \ +l_1+l_2-m-1+v_3(k)-v_3(j_1)-v_3(j_2)=:D_{l_1,l_2}.
\end{align*}
Since $2\cdot3^{m}-k=3^{l_1}-j_1+3^{l_2}-j_2$ with $2\le k\le 4\cdot 3^{m-1}$,
$2\le j_1\le 2\cdot3^{l_1-1}$ and $2\le j_2\le 2\cdot3^{l_2-1}$, we get that
$v_3(k)-v_3(j_1)-v_3(j_2)\ge -\max\{v_3(j_1),v_3(j_2)\}$.
In what follows, we prove that $D_{l_1,l_2}\ge 1$. 
Since $m\le l_2 \le m+1$, there are two cases to consider. 

\smallskip
\noindent\emph{The case $l_2=m$.}

In this case, we have $2\le j_2\le \frac{k}{2}-1\le 2\cdot3^{m-1}-1$.
Since $2\mid j_2$, one derives that $2\le j_2\le 2\cdot3^{m-1}-2$ and
$v_3(j_2)\le m-2$. Note that
\begin{align*}
D_{l_1,m}=\frac{1}{2}(3^{m}-3^{l_1})-(m-l_1)(3^{l_1}-j_1)+l_1-1+v_3(k)-v_3(j_1)-v_3(j_2).
\end{align*}

If $1\le l_1\le m-1$, then $2\le j_1\le 2\cdot3^{m-2}$.
Thus $v_3(j_1)\le m-2$ and one has
\begin{align*}
D_{l_1,m}
\ge \frac{1}{2}(3^{m}-3^{l_1})-(m-l_1)(3^{l_1}-j_1)+l_1-1-(m-2)
&\ge \frac{1}{2}(3^{m}-3^{l_1})-(m-l_1)(3^{l_1}-1)+1\ge 2.
\end{align*}
So $v_3(g_{1,k}(i))\ge W+2$ is proved when $1\le l_1\le m-1$ and $l_2=m$.

If $l_1=m$, then $\frac{k}{2}+1\le j_1\le 2\cdot3^{m-1}$.
Note that $j_1+j_2=k$ and $2\le j_2\le j_1-2$.
This implies that $6\le k\le 4\cdot3^{m-1}-2$.
Since $i\ne 3^{m-1}$ when $m\ge 2$ and $2+2\cdot3^{m-1}\le k\le 4\cdot3^{m-1}-2$,
we have $4\le j_1\le 2\cdot3^{m-1}-2$. Thus $v_3(j_1)\le m-2$ and
$$
D_{m,m}=m-1+v_3(k)-v_3(j_1)-v_3(j_2)\ge m-1-(m-2)=1,
$$
where the equality holds if and only if
$j_1$ (or $j_2$) equals $2\cdot3^{m-2}$ or $4\cdot3^{m-2}$
and $v_3(j_2)=v_3(k)$ (or $v_3(j_1)=v_3(k)$).
Hence $v_3(g_{1,k}(i))\ge W+1$ is proved when $l_1=l_2=m$.
Moreover, one notes that if $m\ge 3$ and $2\cdot3^{m-2}+2\le k\le 4\cdot3^{m-2}-2$, then
\begin{align}\label{3.7}
v_3(g_{1,k}(3^{m}-2\cdot3^{m-2}))
=v_3(g_{1,k}(3^m-(k-2\cdot3^{m-2})))
=v_3(s^{(1)}_{n, m, 2\cdot3^{m-2}}\cdot s^{(1)}_{n, m, k-2\cdot3^{m-2}})=W+1.
\end{align}
If $m\ge 3$ and $4\cdot3^{m-2}+2\le k\le 8\cdot3^{m-2}-2$ with
$k\ne 2\cdot3^{m-1}$, then (\ref{3.7}) holds and
\begin{align}\label{3.8}
v_3(g_{1,k}(3^{m}-4\cdot3^{m-2}))
=v_3(g_{1,k}(3^m-(k-4\cdot3^{m-2})))
=v_3(s^{(1)}_{n, m, 4\cdot3^{m-2}}\cdot s^{(1)}_{n, m, k-4\cdot3^{m-2}})=W+1.
\end{align}
If $m\ge 3$ and $8\cdot3^{m-2}+2\le k\le 10\cdot3^{m-2}-2$, then
(\ref{3.8}) holds.

For any other integer $k$ in this case, we have $v_3(g_{1,k}(i))\ge W+2$.

\smallskip
\noindent\emph{The case $l_2=m+1$.}

In this case, we have $3^m+3\le j_2\le 2\cdot3^m$. Note that
\begin{align*}
&D_{l_1,m+1}=\frac{1}{2}(3^{m}-3^{l_1})
-(m-l_1)(3^{l_1}-j_1)+2\cdot3^m-j_2+l_1+v_3(k)-v_3(j_1)-v_3(j_2).
\end{align*}

If $l_1=m$, then by $2\cdot3^m-k-i=3^{m+1}-j_2\ge 3^m$,
one knows that $i=3^m-j_1\le 3^m-k$. Thus $k\le j_1\le 2\cdot3^{m-1}$.
By the definition of $B$, $i\ne 3^m-k$. Hence $j_2\le 2\cdot3^m-2$ and
\begin{align*}
D_{m,m+1}=2\cdot3^m-j_2+m+v_3(k)-v_3(j_1)-v_3(j_2)\ge2.
\end{align*}
So $v_3(g_{1,k}(i))\ge W+2$ is proved when $l_1=m$ and $l_2=m+1$.

If $1\le l_1\le m-1$, then
\begin{align*}
D_{l_1,m+1}
&\ge \frac{1}{2}(3^{m}-3^{l_1})-(m-l_1)(3^{l_1}-j_1)+2\cdot3^m-2\cdot3^m+l_1-m\\
&=\frac{1}{2}(3^m-3^{l_1})-(m-l_1)(3^{l_1}-j_1+1)
\ge \frac{1}{2}(3^m-3^{l_1})-(m-l_1)(3^{l_1}-1)\ge 1,
\end{align*}
where the equality holds if and only if
$j_2=2\cdot3^m$, $l_1=m-1$, $j_1=2$,
i.e., $i=3^{m}-k=3^{m-1}-2$.
Hence $v_3(g_{1,k}(i))\ge W+1$ is proved when $1\le l_1\le m-1$ and $l_2=m+1$.
Moreover, if $k=2\cdot3^{m-1}+2$, then
\begin{align}\label{3.9}
v_3(g_{1,k}(3^{m}-k))=v_3(g_{1,k}(3^m))
=v_3(s^{(1)}_{n, m-1, 2}\cdot s^{(1)}_{n, m, 0})=W+1.
\end{align}
For any other integer $k$ in this case, we have $v_3(g_{1,k}(i))\ge W+2$. 

Consequently, every term indexed by $B_1 \cup B_2$ has 3-adic valuation 
at least $W+1$. All possible terms with $v_3(g_{1,k}(i))= W+1$ 
are precisely those listed in (\ref{3.6}) to (\ref{3.9}). All other terms indexed by 
$B_1 \cup B_2$ have 3-adic valuation at least $W+2$.

Combining the estimates for $B_3$ and $B_1 \cup B_2$, we obtain 
$v_3(g_{1,k}(i))\ge W+1$ for all $ i \in B' $. 
Moreover, all possible terms with 3-adic valuation exactly $W+1$ 
are listed in (\ref{3.3}) to (\ref{3.9}). Since $g_{1,k}(i)=0$ for $i\in B\setminus B'$, 
it follows that $v_3(s_2)\ge W+1$. This proves the estimate for 
the odd-index part $s_2$, together with the description of the possible $W+1$-terms.  


\medskip
\noindent\textbf{Estimate for the even-index part $s_3$.}

It remains to estimate the contribution from even indices. 
We prove that $v_3(s_3)\ge W+2.$

Let $i\in C$. Then $2\mid i$. If either $i\ge 3^n+1$ or $2\cdot3^m-k-i\ge 3^n+1$,
then $g_{1,k}(i)=0$. Hence it suffices to consider 
$C':=C\cap[2\cdot3^m-k-3^n,3^n]$. Equivalently, $C'=C\cap[2\cdot3^m-k-3^n+1,3^n-1]$, 
since $C$ consists of even integers. Note that $C'\ne \varnothing$, since either 
$2\cdot3^m-k\in C'$ or $3^n-1\in C'$.

For $i\in C'$, both $i$ and $2\cdot3^m-k-i$ are even. 
By Lemmas \ref{lem2.2} and \ref{lem2.6}, we have 
\begin{align}
v_3(s(3^n, i))=v_3(s(3^n, i+1))+v_3(3^n-i)+n \label{3.10}
\end{align}
and
\begin{align}
v_3(s_{3^n}(3^n,2\cdot3^m-k-i))\ge v_3(s(3^n,2\cdot3^m-k-i+1))+n. \label{3.11}
\end{align}
Moreover, since $i$ is even and $3\le i+1\le 3^n$, the induction hypothesis 
and Lemma \ref{lem2.7} imply 
\begin{align}
v_3(s(3^n,i+1))-v_3(s(3^n,i-1))\ge -2n+2. \label{3.12}
\end{align}
Then it follows from (\ref{3.10}) to (\ref{3.12}) that
\begin{align}
v_3(g_{1,k}(i))&=v_3(s(3^n,i))+v_3(s_{3^n}(3^n,2\cdot3^m-k-i))\notag\\
&\ge v_3(s(3^n,i+1))+ v_3(s(3^n,2\cdot3^m-k-i+1))+2n\notag\\
&\ge v_3(s(3^n,i-1))+v_3(s(3^n,2\cdot3^m-k-(i-1)))+2\notag\\
&=v_3(s(3^n,i-1))+v_3(s_{3^n}(3^n,2\cdot3^m-k-(i-1)))+2
=v_3(g_{1,k}(i-1))+2. \label{3.13}
\end{align}
Since $i-1$ is odd and $2\le i\le 2\cdot3^m-k$,
one has $i-1\in A\cup B$. By the estimates for $s_1$ and $s_2$, 
we know that $v_3(g_{1,k}(i-1))\ge W$. It follows from (\ref{3.13}) that 
$$v_3(g_{1,k}(i))\ge v_3(g_{1,k}(i-1))+2\ge W+2.$$

Therefore $v_3(g_{1,k}(i))\ge W+2$ 
for $i\in C'$, and consequently $v_3(s_3)\ge W+2$. 
This proves the estimate for the even-index part $s_3$.

Combining the estimates for $s_1$, $s_2$ and $s_3$, we obtain 
$v_3(s^{(2)}_{n,m,k})=W$. Therefore formula (\ref{1.4}) holds for 
$a=2$ whenever it holds for $a=1$. This completes the proof 
of Lemma \ref{lem3.1}. 
\end{proof}

\begin{rem}\label{rem3.2}
Let $n, m, k\in \mathbb{Z}$ such that
$1\le m\le n$ and $2\le k\le 4\cdot3^{m-1}$ with $2\mid k$.
If Theorem \ref{thm1} holds for $a=1$, then
from the proof of Lemma \ref{lem3.1}, we can conclude that
$$s^{(2)}_{n, m, k}=\bar{s}_1+\bar{s}_2+\bar{s}_3$$
with $\bar{s}_1, \bar{s}_2, \bar{s}_3\in \mathbb{Z}$,
$v_3(\bar{s}_1)=v_3(s^{(2)}_{n, m, k})=W$,
$v_3(\bar{s}_2)\ge W+1$ and $v_3(\bar{s}_3)\ge W+2$, where
\begin{align*}
&\bar{s}_1=
\left\{\begin{array}
{ll}2s^{(1)}_{n, m, 0}\cdot s^{(1)}_{n, m, k}, & {\rm if}\ 2 \leq k \leq 2\cdot3^{m-1},\\
2s^{(1)}_{n, m-1, 0}\cdot s^{(1)}_{n, m, k-2\cdot3^{m-1}}, &
{\rm if}\ k\in[2\cdot3^{m-1}+2, 4\cdot3^{m-1}-2]\ {\rm with}\ m\ge 2,\\
(s^{(1)}_{n, m-1, 0})^2, & {\rm if}\ k=4\cdot3^{m-1}.
\end{array}\right.
\end{align*}

If $m=2$ and $k=2\cdot3^{m-1}+2\cdot3^{m-2}$, then by (\ref{3.4}), (\ref{3.6}) and (\ref{3.9}), one has
\[
\bar{s}_2=2s^{(1)}_{n, m-1, 2}\cdot s^{(1)}_{n, m, 0}+(s^{(1)}_{n,m,4\cdot3^{m-2}})^2.
\]
If $m\ge 2$ and $k=4\cdot3^{m-2}$, then by (\ref{3.3}), 
\[
\bar{s}_2=(s^{(1)}_{n,m,2\cdot3^{m-2}})^2.
\]
Assume now that $m\ge 3$. If $k\in [2\cdot3^{m-2}+2, 4\cdot3^{m-2}-2]$, then by (\ref{3.7}),
\[
\bar{s}_2=2s^{(1)}_{n,m,2\cdot3^{m-2}}\cdot s^{(1)}_{n,m,k-2\cdot3^{m-2}}.
\]
If $k\in [4\cdot3^{m-2}+2, 8\cdot3^{m-2}-2]$
with $k\notin \{2\cdot3^{m-1}, 2\cdot3^{m-1}+2\}$, then by (\ref{3.7}) and (\ref{3.8}),
\[
\bar{s}_2=2s^{(1)}_{n,m,2\cdot3^{m-2}}\cdot s^{(1)}_{n,m,k-2\cdot3^{m-2}}
+2s^{(1)}_{n,m,4\cdot3^{m-2}}\cdot s^{(1)}_{n,m,k-4\cdot3^{m-2}}.
\]
If $k=2\cdot3^{m-1}+2$, then by (\ref{3.6}) to (\ref{3.9}),
\[
\bar{s}_2=2s^{(1)}_{n,m-1,2}\cdot s^{(1)}_{n,m,0}
+2s^{(1)}_{n,m,2\cdot3^{m-2}}\cdot s^{(1)}_{n,m,k-2\cdot3^{m-2}}
+2s^{(1)}_{n,m,4\cdot3^{m-2}}\cdot s^{(1)}_{n,m,k-4\cdot3^{m-2}}.
\]
If $k=8\cdot3^{m-2}$, then by (\ref{3.4}), 
\[
\bar{s}_2=(s^{(1)}_{n,m,4\cdot3^{m-2}})^2.
\]
If $k\in [8\cdot3^{m-2}+2, 10\cdot3^{m-2}-2]$,
then by (\ref{3.8}),
\[
\bar{s}_2=2s^{(1)}_{n,m,4\cdot3^{m-2}}\cdot s^{(1)}_{n,m,k-4\cdot3^{m-2}}.
\]
For any other $k$, one has $\bar{s}_2=0$.
\end{rem}


\section{From $2\cdot 3^n$ to $3^{n+1}$: proof of Theorem \ref{thm1}}


In this section, we complete the proof of Theorem \ref{thm1}. 
By Lemma \ref{lem3.1}, it suffices to prove formula (\ref{1.4}) 
for $a=1$. We shall prove this by induction on $n$.

We briefly describe the structure of the induction step. 
Assuming that Theorem \ref{thm1} holds for $n$, 
Lemma \ref{lem3.1} gives the required formula for the 
family $s^{(2)}_{n, m, k}$. The remaining task is 
to derive the formula for $s^{(1)}_{n+1, m, k}$. We do 
this by applying the convolution formula of Lemma \ref{lem2.4}, 
which expresses $s^{(1)}_{n+1, m, k}$ as a sum of products 
involving $s(3^n, \cdot)$ and $s_{3^n}(2\cdot3^n, \cdot)$. 

After reducing to even $k$, we denote by $W_{m, k}$ the value 
predicted by the formula. The proof then proceeds by estimating 
the summands $g_{2,k}(i)= s(3^n,i)s_{3^n}(2\cdot3^n, 3^m-k-i)$. 
Even-index terms are shown to have 3-adic valuation at least $W_{m, k}+1$, 
and therefore cannot contribute to the leading valuation. 
Odd-index terms may have 3-adic valuation as low as $W_{m, k}-1$, 
so we isolate a special subset $A$ of odd indices containing all possible 
leading contributions. The key point is then to prove 
$v_3(\sum_{i\in A} g_{2,k}(i))=W_{m, k}.$ Once this is established, 
all remaining summands have strictly larger 3-adic valuation, and the desired 
value of $v_3(s^{(1)}_{n+1, m, k})$ follows.

\subsection{Induction setup}

We first set up the induction step. The case $n=1$ is immediate. 
Indeed, then $m=1$ and $k=2$, and 
$v_3(s(3,3-2))=v_3(s(3,1))=v_3(2)=0$. Thus Theorem \ref{thm1} holds 
for $n=1$. 

Assume now that Theorem \ref{thm1} holds for $n$.
We prove it for $n+1$. This is equivalent to showing that, for all integers $m$ and $k$
satisfying $1\le m\le n+1$, $2\le k\le 2\cdot3^{m-1}+1$ and $k<3^m$, one has
\begin{align}\label{4.1}
v_3(s^{(1)}_{n+1, m, k})
=\frac{1}{2}(3^{n+1}-3^m)-(n+1-m)(3^m-k)
+m-1-v_3\Big(\Big\lfloor\frac{k}{2}\Big\rfloor\Big)+(m+v_3(k))\epsilon_k,
\end{align}
where $\epsilon_k=0$ if $k$ is even and $\epsilon_k=1$ if $k$ is odd.

The case $m=1$ is also immediate, since then $k=2$ and
\[
v_3(s^{(1)}_{n+1, 1, 2})=v_3(s(3^{n+1},1))=v_3((3^{n+1}-1)!)
=\frac{1}{2}(3^{n+1}-2n-3).
\]
Hence we may assume that $2\le m \le n+1$. 
Moreover, by Lemma \ref{lem2.3}, it suffices to prove (\ref{4.1}) for even $k$.
Thus, throughout the rest of this section, we assume that 
 $2\le m\le n+1$, $2\le k\le 2\cdot3^{m-1}$ and $k$ is even. 
It remains to prove  
\begin{align}\label{4.2}
&v_3(s^{(1)}_{n+1, m, k})
=\frac{1}{2}(3^{n+1}-3^m)-(n+1-m)(3^m-k)+m-1-v_3(k)=:W_{m,k}.
\end{align}

\subsection{The convolution decomposition and estimates for $g_{2,k}(i)$}

In this subsection, we decompose $s^{(1)}_{n+1, m, k}$ into a sum 
of terms $g_{2,k}(i)$ and establish valuation estimates for these terms
according to the parity of $i$. By Lemma \ref{lem2.4}, with the substitution 
$(m,n,k)\mapsto(3^{n},2\cdot3^{n},3^m-k)$, we have 
\[
s^{(1)}_{n+1, m, k}=\sum_{i=1}^{3^m-k}s(3^n,i)s_{3^n}(2\cdot3^n,3^m-k-i).
\]
Let $i\in \mathbb{Z}$ and $1\le i\le 3^m-k$.
Define $g_{2,k}(i):= s(3^n,i)s_{3^n}(2\cdot3^n,3^m-k-i)$. 
Then 
\begin{align}\label{4.3}
s^{(1)}_{n+1, m, k}=\sum_{i=1}^{3^m-k}g_{2,k}(i).
\end{align}

The proof of (\ref{4.2}) is based on the following strategy. 
We first establish two estimates for the summands $g_{2,k}(i)$: 
\begin{equation}\label{4.4}
\begin{aligned}
v_3(g_{2,k}(i))&\ge W_{m, k}-1\quad  &&\textrm{if}\ 2\nmid i, \\
v_3(g_{2,k}(i))&\ge W_{m, k}+1\quad &&\textrm{if}\ 2\mid i.
\end{aligned}
\end{equation}
The second estimate in (\ref{4.4}) shows that even-index terms cannot affect the 
leading valuation. The first estimate in (\ref{4.4}) shows that possible 
low-valuation terms among odd indices can only have valuation $W_{m,k}-1$ or
$W_{m, k}$. Accordingly, define
\begin{align*}
A_{1}&:=\{i\in [1, 3^m-k ]\cap (1+2\mathbb{Z}): v_3(g_{2,k}(i))=W_{m, k}-1\},\\
A_{2}&:=\{i\in [1, 3^m-k ]\cap (1+2\mathbb{Z}): v_3(g_{2,k}(i))=W_{m, k}\}, 
\quad A:=A_1\cup A_2.
\end{align*}
We also define the two remainder sets 
\[
B:=\bigl([1,3^m-k]\cap(1+2\mathbb Z)\bigr)\setminus A,
\qquad
C:=[1,3^m-k]\cap 2\mathbb Z.
\]
Thus the index set $[1,3^m-k]$ is the disjoint union of $A$, $B$ and $C$. Set
\[
S_A:=\sum_{i\in A}g_{2,k}(i),\qquad
S_B:=\sum_{i\in B}g_{2,k}(i),\qquad
S_C:=\sum_{i\in C}g_{2,k}(i).
\]
Once \eqref{4.4} is proved, the definition of $A$ will imply that
\[
v_3(S_B)\ge W_{m,k}+1,\qquad v_3(S_C)\ge W_{m,k}+1.
\]
Thus the only remaining leading contribution can come from $S_A$. The key
estimate is
\begin{align}\label{4.6}
v_3(S_A)=W_{m,k}.
\end{align}
Therefore, to complete the induction step, it remains to prove
\eqref{4.4} and \eqref{4.6}.

\medskip
\noindent\textbf{Odd indices.}

Now we prove the first estimate in (\ref{4.4}). Let $i$ be an odd integer with $1\le i\le 3^m-k$. 
If $i > 3^n$ or $3^m-k-i > 2\cdot3^n$, then
$g_{2,k}(i)=0$, and there is nothing to prove. 
Hence we may assume that $3^m-k-2\cdot3^n\le i\le 3^n$.

We first consider the boundary case $i=3^m-k$. In this case, 
$3^m-k-i=0$, and therefore 
\begin{align}\label{4.7}
v_3(s_{3^n}(2\cdot 3^n, 3^m-k-i))=v_3(s_{3^n}(2\cdot 3^n, 0))
=v_3\Big(\frac{(3^{n+1}-1)!}{(3^n-1)!}\Big)=3^n-1.
\end{align}
If $i=3^m-k=3^n$, then by $2\le m\le n+1$ and $2\le k\le 2\cdot3^{m-1}$,
we get that $m=n+1$ and $k=2\cdot3^n$. Since $s(3^n,3^n)=1$ and
$W_{m,k}=W_{n+1,2\cdot3^n}=0$, one then deduces that
\begin{align*}
v_3(g_{2,k}(i))=v_3(g_{2, 2\cdot3^n}(3^n))=3^n-1\ge 2=W_{m,k}+2
\end{align*}
as required by (\ref{4.4}). If $i=3^m-k\le 3^n-2$, then $m\le n$.
By the inductive hypothesis and (\ref{4.7}), we obtain 
\begin{align*}
v_3(g_{2,k}(i))-W_{m,k}
&=v_3(s^{(1)}_{n, m, k})+v_3(s_{3^n}(2\cdot 3^n, 0))-W_{m,k}
=3^m-k-1\ge 3^{m-1}-1\ge 2.
\end{align*}
So the first estimate in (\ref{4.4}) holds when $i=3^m-k$. 

It remains to consider the case $1\le i\le 3^m-k-2$. 
Since $i$ is odd and $k$ is even, the integer $3^m-k-i$ is even,  
and $2\le 3^m-k-i < 2\cdot3^m-2$. 
Write 
$$i=3^{l_1}-j_1,\quad 3^m-k-i=2\cdot3^{l_2}-j_2,$$
where $(l_1, j_1)\in T_{1, m}$, $(l_2, j_2)\in T_{2, m}$
with $2\mid j_1$ and $2\mid j_2$. 
Since $2\le k\le 2\cdot3^{m-1}$, we have $3^{m-1}\le 3^m-k\le 3^m-2$. 
In particular, the case $l_1=l_2=m$ cannot occur. Indeed, 
if $l_1=l_2=m$, then 
$3^m-2\ge 3^m-k=i+3^m-k-i=3^m-j_1+2\cdot3^m-j_2\ge
3^m-2\cdot3^{m-1}+2\cdot3^m-4\cdot3^{m-1}=3^m$,
which is a contradiction. 
Since $2\mid (3^m-k-i)$ and $2\le 3^m-k-i\le 2\cdot 3^n$, 
Lemma \ref{lem2.6} also gives us that
\begin{align}
v_3(g_{2,k}(i))\label{4.8}
=v_3(s(3^n,i))+v_3(s_{3^n}(2\cdot3^n,3^m-k-i))
&=v_3(s(3^n,i))+v_3(s(2\cdot3^n,3^m-k-i)).
\end{align}

We first consider the boundary case $i=3^{l_1}-j_1=3^n$. 
Then $l_1=m=n+1$ and $j_1=2\cdot3^{l_1-1}=2\cdot3^n$ since $l_1\le m\le n+1$
and $2\le j_1\le 2\cdot3^{l_1-1}$. Hence $1\le l_2\le m-1=n$. Moreover, 
$2\cdot3^{l_2}-j_2=3^m-k-i=2\cdot3^{n}-k$. Since 
$2\le j_2\le 4\cdot3^{l_2-1}$, we have $2\le k\le 2\cdot3^n-2$, 
and consequently $v_3(j_2)=v_3(k)$. 
By the inductive hypothesis and (\ref{4.8}), we derive that
\begin{align*}
v_3(g_{2,k}(i))-W_{m,k}&=v_3(g_{2,k}(3^n))-W_{n+1,k}
=v_3(s^{(1)}_{n, n, 0})+v_3(s^{(2)}_{n, l_2, j_2})-W_{n+1,k}\notag\\
&=3^n-3^{l_2}-(n-l_2)(2\cdot3^{l_2}-j_2+1)-1=:D_{l_2}.
\end{align*} 
If $l_2=n$, then $2\le j_2=k\le 4\cdot3^{n-1}$ and $D_{n}=-1$.
If $n\ge 2$ and $1\le l_2\le n-1$, then
\begin{align*}
D_{l_2}&\ge 3^n-3^{l_2}-(n-l_2)(2\cdot3^{l_2}-1)-1
\ge 3^n-3^{n-1}-(2\cdot3^{n-1}-1)-1=0,
\end{align*}
where the equality holds if and only if $j_2=2$ and $l_2=n-1$.
Hence the first estimate in (\ref{4.4}) holds when $i=3^n$. 
Moreover, in this boundary case, the possible equality cases are as follows: 
\begin{align*}
&v_3(g_{2,k}(i))=W_{m,k}-1\ \ 
\text{if}\ \ 2\le k\le 4\cdot3^{n-1}=4\cdot3^{m-2}\ \  \text{and}\ \  i=3^n=3^{m-1};\\
&v_3(g_{2,k}(i))=W_{m,k}\ \ \text{if}\ \ 
k=4\cdot3^{n-1}+2=4\cdot3^{m-2}+2\ \  \text{and}\ \  i=3^n=3^{m-1},\  m\ge 3.
\end{align*}
In all other cases with $i=3^n=3^{m-1}$, one has $v_3(g_{2,k}(i))\ge W_{m,k}+1$.

We next consider the other boundary case $3^m-k-i=2\cdot3^{l_2}-j_2=2\cdot3^n$. 
Then $l_2=m=n+1$ and $j_2=4\cdot3^{l_2-1}=4\cdot3^n$. Hence 
$1\le l_1\le m-1=n$. Moreover, $3^{l_1}-j_1=i=3^n-k$. Thus 
$2\le k\le 3^n-1$, and consequently $v_3(j_1)=v_3(k)$. 
By the inductive hypothesis and (\ref{4.8}), we have
\begin{align*}
v_3(g_{2,k}(i))-W_{m,k}
&=v_3(g_{2,k}(3^n-k))-W_{n+1,k}
=v_3(s^{(1)}_{n, l_1, j_1})+v_3(s^{(2)}_{n, n, 0})-W_{n+1,k}\notag\\
&=\frac{1}{2}(3^n-3^{l_1})-(n-l_1)(3^{l_1}-j_1+1)-1=: D_{l_1}.
\end{align*}
If $l_1=n$, then $2\le j_1=k\le 2\cdot3^{n-1}$ and $D_{n}=-1$.
If $n\ge 2$ and $1\le l_1\le n-1$, then
\begin{align*}
D_{l_1}&\ge \frac{1}{2}(3^n-3^{l_1})-(n-l_1)(3^{l_1}-1)-1
\ge \frac{1}{2}(3^n-3^{n-1})-(3^{n-1}-1)-1=0,
\end{align*}
where the equality holds if and only if $j_1=2$ and $l_1=n-1$.
Hence (\ref{4.4}) holds when $3^m-k-i=2\cdot3^n$. 
Moreover, in this boundary case, the possible equality cases are as follows: 
\begin{align*}
&v_3(g_{2,k}(i))=W_{m,k}-1\ \ 
\text{if}\ \ 2\le k\le 2\cdot3^{n-1}=2\cdot3^{m-2}\ \  \text{and}\ \  i=3^n-k=3^{m-1}-k;\\
&v_3(g_{2,k}(i))=W_{m,k}\ \ \text{if}\ \ 
k=2\cdot3^{n-1}+2=2\cdot3^{m-2}+2\ \  \text{and}\ \  i=3^n-k=3^{m-1}-k,\  m\ge 3.
\end{align*}
In all other cases with $i=3^n-k=3^{m-1}-k$, one has $v_3(g_{2,k}(i))\ge W_{m,k}+1$.

It remains to consider the case $i\le 3^n-2$ and $3^m-k-i\le 2\cdot3^n-2$. 
Since $i=3^{l_1}-j_1$ and $3^m-k-i=2\cdot3^{l_2}-j_2$, we have $l_1\le n$ and $l_2\le n$.
By the induction assumption and (\ref{4.8}), we obtain 
\begin{align}
v_3(g_{2,k}(i))-W_{m,k}
&=v_3(s^{(1)}_{n, l_1, j_1})+v_3(s^{(2)}_{n, l_2, j_2})-W_{m,k}\notag\\
&=\frac{1}{2}(3^m-3^{l_1})-3^{l_2}+(3^{l_1}-j_1)(l_1+1-m)+(2\cdot3^{l_2}-j_2)(l_2+1-m)\notag\\
&\quad +l_1+l_2-m-1+v_3(k)-v_3(j_1)-v_3(j_2)=:D_{l_1,l_2}.\label{4.9'}
\end{align}
Therefore, it remains to prove that 
\begin{align} 
D_{l_1, l_2}\ge -1.\label{4.10'}
\end{align}
Note that $3^m-k=i+3^m-k-i=3^{l_1}-j_1+2\cdot3^{l_2}-j_2$
with $2\le k\le 2\cdot 3^{m-1}$, $2\le j_1\le 2\cdot 3^{l_1-1}$ and
$2\le j_2\le 4\cdot 3^{l_2-1}$ imply that
\[
v_3(k)-v_3(j_1)-v_3(j_2)\ge -\max\{v_3(j_1),v_3(j_2)\}.
\]
We prove (\ref{4.10'}) by considering the possible values of $l_1$. 
Since the case $l_1=l_2=m$ has already been excluded and $1\le l_1\le m$, 
it remains to consider the following three cases:
\[
l_1=m,\quad l_1=m-1, \quad 1\le l_1\le m-2.
\]

\smallskip
\noindent\emph{Case 1: $l_1=m$.}

In this case, we have $2\le j_1\le 2\cdot3^{m-1}$ and $1\le l_2\le m-1$.
Hence $v_3(j_1)\le m-1$ and $v_3(j_2)\le l_2-1\le m-2$.
Moreover, 
\begin{align*}
D_{m,l_2}&=3^m-j_1-3^{l_2}+(2\cdot3^{l_2}-j_2)(l_2+1-m)+l_2-1+v_3(k)-v_3(j_1)-v_3(j_2).
\end{align*}
We distinguish two subcases according to whether 
$j_1$ attains its maximal value $2\cdot3^{m-1}$.

\smallskip
\noindent\emph{Subcase 1.1: $j_1=2\cdot3^{m-1}$.}

Then $i=3^{m-1}$. We further distinguish the value of $l_2$. 

If $l_2=m-1$, then $k=j_2\le 4\cdot 3^{m-2}$. A direct calculation gives 
\begin{align*}
D_{m,m-1}&=m-1-1-(m-1)=-1.
\end{align*}

If $1\le l_2\le m-2$, then $m\ge 3$, $i=3^{m-1}$ and 
$3^m-k-i=2\cdot 3^{m-1}-k=2\cdot3^{l_2}-j_2$. Thus 
$4\cdot3^{m-2}+2\le k\le 2\cdot 3^{m-1}-2$, and consequently $v_3(k)=v_3(j_2)$.
It follows that 
\begin{align*}
D_{m,l_2}=3^{m-1}-3^{l_2}(2(m-l_2-1)+1)+(j_2-1)(m-l_2-1)-1
\ge j_2-2\ge 0. 
\end{align*}
Moreover, the equality holds if and only if $l_2=m-2$ and $j_2=2$.

\smallskip
\noindent\emph{Subcase 1.2: $2\le j_1\le 2\cdot3^{m-1}-2$.}

Then $v_3(j_1)\le m-2$, and hence
\begin{align*}
D_{m,l_2}&\ge 3^{m-1}+2-3^{l_2}+(2\cdot3^{l_2}-j_2)(l_2+1-m)+l_2-1-(m-2)\\
&=3^{m-1}-3^{l_2}(2(m-l_2-1)+1)+(j_2-1)(m-l_2-1)+2
\ge 2.
\end{align*}

Combining the two subcases, we have proved (\ref{4.10'}) in the case $l_1=m$.  
Moreover, the equality cases in Case 1 are as follows. 
\begin{align*}
&v_3(g_{2,k}(i))=W_{m,k}-1\ \ \text{if}\ \ 2\le k\le 4\cdot3^{m-2}\ \ \text{and}\ \  i=3^{m-1}; \\
&v_3(g_{2,k}(i))=W_{m,k}\ \ \text{if}\ \ k=4\cdot3^{m-2}+2\ \ \text{and}\ \  i=3^{m-1},\ m\ge 3.  
\end{align*}
In all remaining cases covered by Case 1, we have 
$v_3(g_{2,k}(i))\ge W_{m,k}+1$.

\smallskip
\noindent\emph{Case 2: $l_1=m-1$.}

In this case, we have $2\le j_1\le 2\cdot3^{m-2}$, and hence 
$v_3(j_1)\le m-2$. Moreover, 
\begin{align*}
D_{m-1,l_2}=3^{m-1}-3^{l_2}+(2\cdot3^{l_2}-j_2)(l_2+1-m)+l_2-2+v_3(k)-v_3(j_1)-v_3(j_2).
\end{align*}
We distinguish three subcases according to the value of $l_2$, where $1\le l_2\le m$. 

\smallskip
\noindent\emph{Subcase 2.1: $l_2=m$.}

Then $2\le j_2\le 4\cdot3^{m-1}$ and
\begin{align*}
D_{m-1,m}=4\cdot 3^{m-1}-j_2+m-2+v_3(k)-v_3(j_1)-v_3(j_2).
\end{align*}
If $j_2=4\cdot3^{m-1}$, then $i=3^{m-1}-k$ and so $k=j_1 \le 2\cdot3^{m-2}$.
Hence
\begin{align*}
D_{m-1,m}=m-2-(m-1)=-1.
\end{align*}
If $2\le j_2\le 4\cdot3^{m-1}-2$, then $v_3(j_2)\le m-1$ and
\begin{align*}
D_{m-1,m}\ge 4\cdot 3^{m-1}-(4\cdot 3^{m-1}-2)+m-2-(m-1)=1.
\end{align*}

Hence (\ref{4.10'}) holds in this subcase. 
The only exceptional case occurs when $j_2=4\cdot3^{m-1}$, 
therefore 
\[
v_3(g_{2,k}(i))=W_{m,k}-1\quad \text{if}\quad 2\le k\le 2\cdot3^{m-2}\quad \text{and}\quad  i=3^{m-1}-k. 
\]
In all remaining cases of Subcase 2.1, $D_{m-1,m}\ge 1$, and hence  
$v_3(g_{2,k}(i))\ge W_{m,k}+1$.

\smallskip
\noindent\emph{Subcase 2.2: $l_2=m-1$.}

Then $2\le j_2\le 4\cdot3^{m-2}$. 
Note that in this case $j_1+j_2=k$ and
\[
D_{m-1,m-1}=m-3+v_3(k)-v_3(j_1)-v_3(j_2).
\]

We first identify the cases in which $D_{m-1,m-1}=m-3-(m-2)=-1$. 
This happens only in the following situations:
\begin{align*}
&j_1=2\cdot 3^{m-2},\ \ \ \ v_3(k)=v_3(j_2)=v_3(k-2\cdot 3^{m-2});\\
&j_2=2\cdot 3^{m-2},\ \ \ \ v_3(k)=v_3(j_1)=v_3(k-2\cdot 3^{m-2});\\
&j_2=4\cdot 3^{m-2},\ \ \ \ v_3(k)=v_3(j_1)=v_3(k-4\cdot 3^{m-2}). 
\end{align*}
In each of these cases, 
\[
D_{m-1,m-1}=m-3-(m-2)=-1.
\]
In all remaining cases, we have $D_{m-1,m-1}\ge 0$. The equality 
$D_{m-1,m-1}=0$ occurs only in the following exceptional cases:
\[
j_1=2\cdot3^{m-2},\quad j_2=4\cdot3^{m-2};
\]
or, when $m\ge 3$, in one of the following cases: 
\begin{align*}
&j_1=2\cdot3^{m-3}\ \ \text{or}\ j_2=2\cdot3^{m-3}, \quad v_3(k)=v_3(k-2\cdot3^{m-3});\\
&j_1=4\cdot3^{m-3}\ \ \text{or}\ j_2=4\cdot3^{m-3}, \quad v_3(k)=v_3(k-4\cdot3^{m-3});\\
&j_2=8\cdot3^{m-3}, \quad v_3(k)=v_3(k-8\cdot3^{m-3});\\
&j_2=10\cdot3^{m-3}, \quad v_3(k)=v_3(k-10\cdot3^{m-3}).
\end{align*}
Thus (\ref{4.10'}) is proved in this subcase.

The equality cases above yield the following possibilities for $v_3(g_{2,k}(i))$. 

First, $v_3(g_{2,k}(i))=W_{m,k}-1$ in the following cases: 
\begin{align*}
& 2\cdot 3^{m-2}+2 \le k \le 2\cdot3^{m-1}-2, &&i=3^{m-1}-2\cdot 3^{m-2};\\
& 2\cdot 3^{m-2}+2 \le k \le 4\cdot3^{m-2}, &&i=3^{m-1}-(k-2\cdot3^{m-2});\\
& 4\cdot 3^{m-2}+2 \le k \le 2\cdot3^{m-1}-2, &&i=3^{m-1}-(k-4\cdot3^{m-2}).
\end{align*}
Second, $v_3(g_{2,k}(i))=W_{m,k}$ if 
\[
k=2\cdot3^{m-1}, \quad i=3^{m-2};
\]
or, when $m\ge 3$, in one of the following cases: 
\[
\begin{array}{ll}
2\cdot3^{m-3}+2\le k\le14\cdot3^{m-3}-2,\ 
k\notin\{2\cdot3^{m-2},8\cdot3^{m-3},4\cdot3^{m-2}\},
& i=3^{m-1}-2\cdot3^{m-3};\\
2\cdot3^{m-3}+2\le k\le8\cdot3^{m-3}-2,\ 
k\ne2\cdot3^{m-2},
& i=3^{m-1}-(k-2\cdot3^{m-3});\\
4\cdot3^{m-3}+2\le k\le16\cdot3^{m-3}-2,\ 
k\notin\{2\cdot3^{m-2},10\cdot3^{m-3},4\cdot3^{m-2}\},
& i=3^{m-1}-4\cdot3^{m-3};\\
4\cdot3^{m-3}+2\le k\le10\cdot3^{m-3}-2,\ 
k\ne2\cdot3^{m-2},
& i=3^{m-1}-(k-4\cdot3^{m-3});\\
8\cdot3^{m-3}+2\le k\le14\cdot3^{m-3}-2,\ 
k\ne4\cdot3^{m-2},
& i=3^{m-1}-(k-8\cdot3^{m-3});\\
10\cdot3^{m-3}+2\le k\le16\cdot3^{m-3}-2,\ 
k\ne4\cdot3^{m-2},
& i=3^{m-1}-(k-10\cdot3^{m-3}).
\end{array}
\]
For other $i$ and $k$ in this case, we have $v_3(g_{2,k}(i))\ge W_{m,k}+1$.

\smallskip
\noindent\emph{Subcase 2.3: $m\ge 3$ and $1\le l_2\le m-2$.}

In this case, 
$2\le j_2\le 4\cdot3^{m-3}$ and hence $v_3(j_2)\le m-3$.

If $j_1=2\cdot 3^{m-2}$, then $i=3^{m-2}$. Moreover, 
$3^m-k-i=2\cdot 3^{m-1}-k=2\cdot3^{l_2}-j_2$, which 
implies $v_3(k)=v_3(j_2)$. Therefore
\begin{align*}
&D_{m-1,l_2}=3^{m-1}-3^{l_2}(2(m-l_2-1)+1)+(j_2-1)(m-l_2-1)-1\ge0. 
\end{align*}
The equality in the last inequality can occur only 
when $l_2=m-2$ and $j_2=2$. In that case, 
$k=3^m-3^{m-2}-(2\cdot 3^{m-2}-2)=2\cdot 3^{m-1}+2$,
which contradicts the assumption $2\le k\le 2\cdot 3^{m-1}$.
Hence $D_{m-1,l_2}\ge 1$ when $j_1=2\cdot 3^{m-2}$.

If $2\le j_1\le 2\cdot3^{m-2}-2$, then $v_3(j_1)\le m-3$. Thus 
\begin{align*}
D_{m-1,l_2}
&\ge 3^{m-1}-3^{l_2}(2(m-l_2-1)+1)+j_2(m-l_2-1)+l_2-2-(m-3)\\
&=3^{m-1}-3^{l_2}(2(m-l_2-1)+1)+(j_2-1)(m-l_2-1)
\ge 1.
\end{align*}
Therefore $D_{m-1,l_2}\ge 1$ throughout Subcase 2.3. 
Consequently, $v_3(g_{2,k}(i))\ge W_{m,k}+1$ in this subcase. 

Combining Subcases 2.1 to 2.3, we have $D_{m-1,l_2}\ge -1$ in Case 2. 
The equality cases in Case 2 have been identified in Subcases 2.1 and 2.2. 
In Subcase 2.3, all terms have valuation at least $W_{m,k}+1$.

\smallskip
\noindent\emph{Case 3: $m\ge 3$ and $1\le l_1 \le m-2$.}

We first note that $l_2\ge m-1$. Indeed, if $l_2\le m-2$, then
$3^{m-1}\le 3^m-k\le 3^{m-2}-2+2\cdot3^{m-2}-2=3^{m-1}-4$, 
which is a contradiction. Hence $l_2=m-1$ or $l_2=m$.
Consider the following two subcases.

\smallskip
\noindent\emph{Subcase 3.1: $l_2=m-1$.}

Then $2\le j_2\le 4\cdot3^{m-2}$. We first show that $2\le j_2\le 4\cdot3^{m-2}-2$. 
Indeed, if $j_2=4\cdot3^{m-2}$, 
then $3^{m-1}\le 3^m-k=3^{l_1}-j_1
+2\cdot3^{m-1}-4\cdot3^{m-2}\le3^{m-1}-2$,
which is a contradiction. Hence $2\le j_2\le 4\cdot3^{m-2}-2$ and so 
$v_3(j_2)\le m-2$. Therefore, 
\begin{align*}
D_{l_1,m-1}
&=\frac{1}{2}(3^{m-1}-3^{l_1})+(3^{l_1}-j_1)(l_1+1-m)+l_1-2+v_3(k)-v_3(j_1)-v_3(j_2)\\
&\ \ge \frac{3^{m-1}}{2}-\frac{3^{l_1}}{2}(2(m-l_1-1)+1)+j_1(m-l_1-1)+l_1-2-(m-2)\\
&\ =\frac{3^{m-1}}{2}-\frac{3^{l_1}}{2}(2(m-l_1-1)+1)+(j_1-1)(m-l_1-1)-1
\ge 0. 
\end{align*}
The equality in the last inequality occurs if and only if
$$l_1=m-2,\quad j_1=2, \quad j_2=2\cdot3^{m-2}.$$
Thus $D_{l_1,m-1}\ge 0$ holds in this subcase.

The equality case in Subcase 3.1 gives $v_3(g_{2,k}(i))=W_{m,k}$ 
only when 
$$k=4\cdot3^{m-2}+2, \quad i=3^{m-2}-2=3^{m-1}-(k-2\cdot 3^{m-2}),\quad m\ge 3.$$
In all other cases of Subcase 3.1, we have $v_3(g_{2,k}(i))\ge W_{m,k}+1$.

\smallskip
\noindent\emph{Subcase 3.2: $l_2=m$.}

Then $2\le j_2\le 4\cdot3^{m-1}$ and
\begin{align*}
D_{l_1,m}&=\frac{1}{2}(3^{m+1}-3^{l_1})+(3^{l_1}-j_1)(l_1+1-m)-j_2
+l_1-1+v_3(k)-v_3(j_1)-v_3(j_2)\\
&\ge \frac{1}{2}(3^{m+1}-3^{l_1})+(3^{l_1}-j_1)(l_1+1-m)-4\cdot3^{m-1}+l_1-1-(m-1)\\
&= \frac{1}{2}(3^{m-1}-3^{l_1})+(3^{l_1}-j_1)(l_1+1-m)+l_1-m\\
&=\frac{3^{m-1}}{2}-\frac{3^{l_1}}{2}(2(m-l_1-1)+1)+(j_1-1)(m-l_1-1)-1
\ge 0. 
\end{align*}
The equality in the last inequality occurs if and only if
$$l_1=m-2,\quad j_1=2, \quad j_2=4\cdot3^{m-1}.$$
Thus $D_{l_1,m}\ge 0$ holds in this subcase.

The equality case in Subcase 3.2 gives $v_3(g_{2,k}(i))=W_{m,k}$ 
only when 
$$k=2\cdot3^{m-2}+2, \quad i=3^{m-2}-2=3^{m-1}-k,\quad m\ge 3.$$
In all other cases of Subcase 3.2, we have $v_3(g_{2,k}(i))\ge W_{m,k}+1$.

Combining Subcases 3.1 and 3.2, we obtain $D_{l_1,l_2}\ge 0$ 
throughout Case 3. Hence all terms covered by Case 3 have 3-adic valuation at least $W_{m,k}$.  
The equality cases have been identified in Subcases 3.1 and 3.2; in all other cases, 
the 3-adic valuation of $g_{2,k}(i)$ is at least $W_{m,k}+1$. 

Combining Cases 1 to 3, we have proved that $D_{l_1,l_2}\ge -1$ in all remaining cases. 
Together with (\ref{4.9'}), this gives $v_3(g_{2,k}(i))\ge W_{m,k}-1$ for every odd integer $i$ 
with $1\le i\le 3^m-k$. Hence the first estimate in (\ref{4.4}) is proved.

\medskip
\noindent\textbf{Even indices.}

We now prove the second estimate in (\ref{4.4}). Let $i$ be an even integer with $1\le i\le 3^m-k$.
Then $2\le i\le 3^m-k-1$. If $i> 3^n$ or $3^m-k-i> 2\cdot3^n$, 
then $g_{2,k}(i)=0$, and there is nothing to prove. 
Hence we may assume that $2\le i\le 3^n-1$ and $1\le 3^m-k-i\le 2\cdot 3^n$.

Since $i-1$ is odd and $1\le i-1\le 3^m-k-2$, the first estimate in (\ref{4.4}) gives 
\begin{align}
v_3(g_{2,k}(i-1))\ge W_{m, k}-1.\label{4.11'}
\end{align}
On the other hand, since $i$ is even and $2\le i\le 3^n-1$, and since 
$3^m-k-i$ is odd with $1\le 3^m-k-i\le 2\cdot 3^n$, 
the estimates (\ref{3.10}), Lemma \ref{lem2.6}, (\ref{3.12}) and (\ref{4.8}) imply 
\begin{align}
v_3(g_{2,k}(i))
&=v_3(s(3^n, i))+v_3(s_{3^n}(2\cdot3^n, 3^m-k-i))\notag\\
&\ge v_3(s(3^n,i+1))+n+v_3(s(2\cdot3^n, 3^m-k-i+1))+n\notag\\
&\ge v_3(s(3^n,i-1))+v_3(s(2\cdot3^n,3^m-k-(i-1)))+2
=v_3(g_{2,k}(i-1))+2. \label{4.12'}
\end{align}
Combining (\ref{4.11'}) and (\ref{4.12'}), we obtain $v_3(g_{2,k}(i))\ge W_{m, k}+1.$ 
This proves the second estimate in (\ref{4.4}).


\subsection{The leading contribution}

The goal of this subsection is to identify the leading terms and 
to prove that their total contribution has 3-adic valuation $W_{m,k}$. 
More precisely, we shall determine the set $A$ introduced above and prove 
$v_3(S_A)=W_{m, k}$, which is precisely (\ref{4.6}). 

Recall that
\[
A=A_1\cup A_2,\qquad S_A=\sum_{i\in A}g_{2,k}(i),
\]
where $A_1$ and $A_2$ were defined by the conditions
\[
v_3(g_{2,k}(i))=W_{m,k}-1
\quad\text{and}\quad
v_3(g_{2,k}(i))=W_{m,k},
\]
respectively. Thus $A$ consists exactly of the odd-index terms whose
valuations may affect the leading valuation.

We shall prove (\ref{4.6}) in several steps. First, since the sets $A_1$ and 
$A_2$ depend on the value of $k$, we decompose the possible values of $k$ 
and give an explicit description of $A_1$ and $A_2$ in each range. 
Next, we replace $g_{2,k}(i)$ by a corresponding term involving ordinary 
Stirling numbers and reduce the proof of (\ref{4.6}) to an equivalent estimate 
for a modified sum $S_A'$. Finally, we prove this estimate case by case, 
treating the exceptional cases first and then the remaining ranges of $k$. 

\medskip
\noindent\textbf{Decomposition of the parameter range and description of $A_1, A_2$.}

Since the sets $A_1$ and $A_2$ depend on the value of $k$, 
we now decompose the range of $k$. Define
\[
K_1=[2,\,2\cdot 3^{m-2}]\cap (2\mathbb{Z}), \qquad
K_3=\{4\cdot 3^{m-2}\}, \qquad
K_5=\{2\cdot 3^{m-1}\},
\]
and, for $m\ge 3$,
\[
K_2=[2+2\cdot 3^{m-2},\,4\cdot 3^{m-2}-2]\cap (2\mathbb{Z}), \qquad
K_4=[2+4\cdot 3^{m-2},\,2\cdot 3^{m-1}-2]\cap (2\mathbb{Z}).
\]

Moreover, we further decompose the sets $K_1$, $K_2$, and $K_4$ as follows:
\[
K_1=\bigcup_{j=1}^4 K_{1j}, \qquad
K_2=\bigcup_{j=1}^6 K_{2j}, \qquad
K_4=\bigcup_{j=1}^5 K_{4j}.
\]
For $K_1$, we define
\[
\begin{aligned}
K_{11} &= [2+2\cdot 3^{m-3},\, 4\cdot 3^{m-3}-2]\cap (2\mathbb{Z}), \\
K_{12} &= \{4\cdot 3^{m-3}\}, \\
K_{13} &= [2+4\cdot 3^{m-3},\, 2\cdot 3^{m-2}-2]\cap (2\mathbb{Z}),
\end{aligned}
\]
where $K_{12}$ is defined for $m\ge 3$, and $K_{11},K_{13}$ are defined for $m\ge 4$. We set
\[
K_{14}=K_1\setminus \bigcup_{j=1}^3 K_{1j}.
\]
For $K_2$, we define
\[
\begin{aligned}
&K_{21} = \{2+2\cdot 3^{m-2}\},  
&&K_{22} = [4+2\cdot 3^{m-2},\, 8\cdot 3^{m-3}-2]\cap (2\mathbb{Z}), \\
&K_{23} = \{8\cdot 3^{m-3}\}, \quad
&&K_{24} = [2+8\cdot 3^{m-3},\, 10\cdot 3^{m-3}-2]\cap (2\mathbb{Z}), \\
&K_{25} = \{10\cdot 3^{m-3}\}, \quad 
&&K_{26} = [2+10\cdot 3^{m-3},\, 4\cdot 3^{m-2}-2]\cap (2\mathbb{Z}),
\end{aligned}
\]
where $K_{21}$, $K_{22}$, $K_{24}$ and $K_{26}$ are defined for $m\ge 4$.
For $K_4$, we define
\[
\begin{aligned}
&K_{41} = \{2+4\cdot 3^{m-2}\}, 
&&K_{42} = [4+4\cdot 3^{m-2},\, 14\cdot 3^{m-3}-2]\cap (2\mathbb{Z}), \\
&K_{43} = \{14\cdot 3^{m-3}\}, 
&&K_{44} = [2+14\cdot 3^{m-3},\, 16\cdot 3^{m-3}-2]\cap (2\mathbb{Z}),
\end{aligned}
\]
where $K_{41}$, $K_{42}$ and $K_{44}$ are defined for $m\ge 4$. We finally set
\[
K_{45}=K_4\setminus \bigcup_{j=1}^4 K_{4j}.
\]
The following table summarizes the role of this decomposition in the
subsequent leading-term analysis.
\begin{table}[htbp]
\centering
\caption{Roadmap for the decomposition of the even parameter range.}
\label{tab:roadmap-K}
\renewcommand{\arraystretch}{1.25}
\begin{tabular}{c|c|p{7.2cm}}
\hline
Range & Subranges & Role in the leading-term analysis \\
\hline
$K_1$ 
& $K_{11},K_{12},K_{13},K_{14}$ 
& The lower range. Here the possible leading indices are described by
$T_1(k)=\{0,k\}$, while $T_2(k)$ depends on the finer subrange. \\

$K_2$ 
& $K_{21},K_{22},K_{23},K_{24},K_{25},K_{26}$ 
& The middle range. An additional pair of possible leading indices appears,
and $T_1(k)$ contains the terms involving $2\cdot 3^{m-2}$. \\

$K_3$ 
& exceptional singleton 
& Treated separately. In this case $A_1=\{3^{m-2},3^{m-1}\}$ and
$A_2=\varnothing$. \\

$K_4$ 
& $K_{41},K_{42},K_{43},K_{44},K_{45}$ 
& The upper range. The form of $T_1(k)$ changes, and the contribution of
$T_2(k)$ again depends on the finer subrange. \\

$K_5$ 
& exceptional singleton 
& Treated immediately. In this case $A_1=\varnothing$ and
$A_2=\{3^{m-2}\}$. \\
\hline
\end{tabular}
\end{table}

\noindent The table is only intended as a roadmap; the exact descriptions of
$A_1$ and $A_2$ are given below.

We first record the two exceptional cases. If $k\in K_5$, then
\[
A_1=\varnothing\quad \text{and}\quad A_2=\{3^{m-2}\}.
\]
If $k\in K_3$, then
\[
A_1=\{3^{m-2},3^{m-1}\}\quad \text{and}\quad A_2=\varnothing.
\]
For the remaining ranges, it is convenient to describe $A_1$ and $A_2$
by translating finite sets of integers. For any integer $a$ and any finite
set $T$ of integers, write
\[
a-T:=\{a-t\mid t\in T\}.
\]
Then, for $k\in K_1\cup K_2\cup K_4$, the sets $A_1$ and $A_2$ can be
written in the form
\[
A_1=3^{m-1}-T_1(k),\qquad A_2=3^{m-1}-T_2(k),
\]
where the finite sets $T_1(k)$ and $T_2(k)$ are as follows.

If $k\in K_1$, then
\[
T_1(k)=\{0,k\},
\]
and
\[
T_2(k)=
\begin{cases}
\{2\cdot3^{m-3},\,k-2\cdot3^{m-3}\}, & k\in K_{11},\\
\{2\cdot3^{m-3}\}, & k\in K_{12},\\
\{2\cdot3^{m-3},\,k-2\cdot3^{m-3},\,4\cdot3^{m-3},\,k-4\cdot3^{m-3}\}, & k\in K_{13},\\
\varnothing, & k\in K_{14}.
\end{cases}
\]

If $k\in K_2$, then
\[
T_1(k)=\{0,\,2\cdot3^{m-2},\,k-2\cdot3^{m-2}\},
\]
and
\[
T_2(k)=
\begin{cases}
\{k,\,2\cdot3^{m-3},\,k-2\cdot3^{m-3},\,4\cdot3^{m-3},\,k-4\cdot3^{m-3}\}, & k\in K_{21},\\
\{2\cdot3^{m-3},\,k-2\cdot3^{m-3},\,4\cdot3^{m-3},\,k-4\cdot3^{m-3}\}, & k\in K_{22},\\
\{k,\,4\cdot3^{m-3}\}, & m=3,\ k\in K_{23},\\
\{4\cdot3^{m-3}\}, & m\ge4,\ k\in K_{23},\\
\{2\cdot3^{m-3},\,4\cdot3^{m-3},\,k-4\cdot3^{m-3},\,k-8\cdot3^{m-3}\}, & k\in K_{24},\\
\{2\cdot3^{m-3}\}, & k\in K_{25},\\
\{2\cdot3^{m-3},\,4\cdot3^{m-3},\,k-8\cdot3^{m-3},\,k-10\cdot3^{m-3}\}, & k\in K_{26}.
\end{cases}
\]

If $k\in K_4$, then
\[
T_1(k)=\{2\cdot3^{m-2},\,k-4\cdot3^{m-2}\},
\]
and
\[
T_2(k)=
\begin{cases}
\{0,\,k-2\cdot3^{m-2},\,2\cdot3^{m-3},\,4\cdot3^{m-3},\,k-8\cdot3^{m-3},\,k-10\cdot3^{m-3}\}, & k\in K_{41},\\
\{2\cdot3^{m-3},\,4\cdot3^{m-3},\,k-8\cdot3^{m-3},\,k-10\cdot3^{m-3}\}, & k\in K_{42},\\
\{0,\,k-2\cdot3^{m-2},\,4\cdot3^{m-3}\}, & m=3,\ k\in K_{43},\\
\{4\cdot3^{m-3}\}, & m\ge4,\ k\in K_{43},\\
\{4\cdot3^{m-3},\,k-10\cdot3^{m-3}\}, & k\in K_{44},\\
\varnothing, & k\in K_{45}.
\end{cases}
\]

The description above identifies all odd indices that may contribute to the 
leading valuation. We first deal with the immediate case $K_5$.

\medskip
\noindent\textbf{The immediate case $K_5$.}

Suppose that $k\in K_5$. Then $k=2\cdot 3^{m-1}$, 
$A_1=\varnothing$ and $A_2=\{3^{m-2}\}$. Hence 
\begin{align*}
v_3\Big(\sum_{i\in A}g_{2, k}(i)\Big)
=v_3\Big(\sum_{i\in A_2}g_{2, k}(i)\Big)=v_3(g_{2, 2\cdot 3^{m-1}}(3^{m-2}))=W_{m, k}.
\end{align*}
Thus (\ref{4.6}) holds when $k\in K_5$.

\medskip
\noindent\textbf{Reduction to ordinary Stirling numbers.}

It remains to consider $k\in \bigcup_{j=1}^4K_j$. For $i\in A$, we have
$2\le 3^m-k-i\le 2\cdot3^n$. 
By Lemma \ref{lem2.6}, applied with $a=2$, we have
\[
g_{2,k}(i)=g'_{2,k}(i)+O(3^{v_3(g'_{2,k}(i))+2}),
\]
where $O(3^r)$ denotes an integer with 3-adic valuation at least $r$, and
\[
g'_{2,k}(i):=s(3^n,i)s(2\cdot3^n,3^m-k-i).
\]
Since $i\in A=A_1\cup A_2$, this gives
\[
g_{2,k}(i)=g'_{2,k}(i)+O(3^{W_{m,k}+1}).
\]
Consequently,
\[
v_3(g'_{2,k}(i))=v_3(g_{2,k}(i))=
\begin{cases}
W_{m,k}-1, & i\in A_1,\\
W_{m,k}, & i\in A_2.
\end{cases}
\]
Define
\[
S_A':=\sum_{i\in A}g'_{2,k}(i).
\]
Then $S_A=S_A'+O(3^{W_{m,k}+1})$. Therefore, to prove \eqref{4.6}, it suffices to prove
\begin{align}\label{4.9}
v_3(S_A')
=v_3\Big(\sum_{i\in A}s(3^n,i)s(2\cdot3^n,3^m-k-i)\Big)
=W_{m,k}.
\end{align} 

\medskip
\noindent\textbf{The exceptional case $K_3$.} 

We first prove (\ref{4.9}) for $k\in K_3$. In this case, 
$k=4\cdot 3^{m-2}$, $A_1=\{3^{m-2}, 3^{m-1}\}$ and 
$A_2=\varnothing$. Thus
\begin{align}\label{4.10}
S_A'=\sum_{i\in A_1}g_{2, k}'(i)
=g_{2, k}'(3^{m-2})+g_{2, k}'(3^{m-1})
=s_{n, m-2, 0}^{(1)}\cdot s_{n, m-1, 2\cdot 3^{m-2}}^{(2)}
+s_{n, m-1, 0}^{(1)}\cdot s_{n, m-1, 4\cdot 3^{m-2}}^{(2)}.
\end{align}
In Remark \ref{rem3.2}, by setting $(n,m,k)\mapsto(n, m-1,2\cdot3^{m-2})$
and $(n,m,k)\mapsto(n, m-1,4\cdot3^{m-2})$, respectively, we obtain that
\begin{align}
s^{(2)}_{n, m-1, 2\cdot3^{m-2}}=2s^{(1)}_{n, m-1, 0}\cdot s^{(1)}_{n, m-1, 2\cdot3^{m-2}}+L_{1}
=2s^{(1)}_{n, m-1, 0}\cdot s^{(1)}_{n, m-2, 0}+L_{1}\ \ \text{and}\ \  
s^{(2)}_{n, m-1, 4\cdot3^{m-2}}=(s^{(1)}_{n, m-2, 0})^2+L_{2},\label{4.12}
\end{align}
where $v_3(L_{1})\ge v_3(s^{(1)}_{n, m-1, 0}\cdot s^{(1)}_{n, m-2, 0})+2$
and $v_3(L_{2})\ge v_3((s^{(1)}_{n, m-2, 0})^2)+2$.
By (\ref{4.10}) and (\ref{4.12}) we have
\begin{align}\label{4.13}
&S_A'=3s^{(1)}_{n, m-1, 0}(s^{(1)}_{n, m-2, 0})^2
+s^{(1)}_{n, m-2, 0}L_{1}+s^{(1)}_{n, m-1, 0}L_{2}.
\end{align}

By the definition of $A_1$ together with (\ref{4.10}) and (\ref{4.12}),
we deduce that
\begin{align}
&v_3(g_{2, k}'(3^{m-2}))=v_3(g_{2, k}'(3^{m-1}))=W_{m, k}-1
=v_3(s^{(1)}_{n, m-1, 0}(s^{(1)}_{n, m-2, 0})^2)\label{4.14}
\end{align}
and
$v_3(s^{(1)}_{n, m-2, 0}L_{1}+s^{(1)}_{n, m-1, 0}L_{2})
\ge v_3(s^{(1)}_{n, m-1, 0}(s^{(1)}_{n, m-2, 0})^2)+2=W_{m, k}+1$.
Thus, (\ref{4.13}) and (\ref{4.14}) give 
\begin{align*}
v_3(S_A')=v_3(3s^{(1)}_{n, m-1, 0}(s^{(1)}_{n, m-2, 0})^2)=W_{m,k}.
\end{align*}
Hence (\ref{4.9}) is proved when $k\in K_3$.

\medskip
\noindent\textbf{The remaining cases.} 

It remains to prove (\ref{4.9}) for $k\in K_1\cup K_2\cup K_4$. 
In these cases, the terms indexed by $A_1$ produce the principal 
contribution, while the terms indexed by $A_2$ must be combined 
with certain secondary terms arising from the expansion in Remark \ref{rem3.2}. 

To keep the formulas readable, we introduce the following abbreviations. 
For $a_1, a_2, a_3\in \mathbb Z$, let
\begin{align*}
&S_{a_1, a_2}:= s^{(1)}_{n, m-1,a_1}\cdot s^{(1)}_{n, m-1, a_2},
&&S_{a_1, a_2, a_3}:= s^{(1)}_{n, m-1,a_1}
\cdot s^{(1)}_{n, m-1, a_2}\cdot s^{(1)}_{n, m-1, a_3},\\
&S'_{a_1, a_2}:= s^{(1)}_{n, m-2,a_1}\cdot s^{(1)}_{n, m-1, a_2},
&&S'_{a_1, a_2, a_3}:= s^{(1)}_{n, m-2,a_1}
\cdot s^{(1)}_{n, m-1, a_2}\cdot s^{(1)}_{n, m-1, a_3},\\
&S''_{a_1, a_2}:= s^{(1)}_{n, m-2,a_1}\cdot s^{(1)}_{n, m-2, a_2}, 
&&S''_{a_1, a_2, a_3}:= s^{(1)}_{n, m-2,a_1}
\cdot s^{(1)}_{n, m-2, a_2}\cdot s^{(1)}_{n, m-1, a_3}.
\end{align*}
We now treat the cases $k\in K_{1}, k\in K_{2}$ and $ k\in K_{4}$ separately. 

\smallskip
\noindent\emph{Case 1: $k\in K_{1}$.}

Then $2\le k\le 2\cdot3^{m-2}$ and
$A_1=3^{m-1}-\{0, k\}$. Hence 
\begin{align}\label{4.15}
g_{2,k}'(3^{m-1})
=s^{(1)}_{n, m-1, 0}\cdot s^{(2)}_{n, m-1, k},\quad\quad 
g_{2,k}'(3^{m-1}-k)
=s^{(1)}_{n, m-1, k}\cdot s^{(2)}_{n, m-1, 0}.
\end{align}
Setting $(n,m,k)\mapsto(n, m-1,k)$ in Remark \ref{rem3.2}, we get 
\begin{align}\label{4.17}
s^{(2)}_{n, m-1, k}=2s^{(1)}_{n, m-1, 0}\cdot s^{(1)}_{n, m-1, k}+L_{11}+L_{12}
=2S_{0, k}+L_{11}+L_{12},
\end{align}
where $L_{11}, L_{12}\in \mathbb{Z}$,
$v_3(L_{11})\ge v_3(S_{0, k})+2$ and
$v_3(L_{12})\ge v_3(S_{0, k})+1$ with 
\[
L_{12}=
\begin{cases}
2s^{(1)}_{n, m-1, 2\cdot3^{m-3}}\cdot s^{(1)}_{n, m-1, k-2\cdot3^{m-3}}, & k\in K_{11},\\
(s^{(1)}_{n, m-1, 2\cdot3^{m-3}})^2, & k\in K_{12},\\
2s^{(1)}_{n, m-1, 2\cdot3^{m-3}}\cdot s^{(1)}_{n, m-1, k-2\cdot3^{m-3}}
+2s^{(1)}_{n, m-1, 4\cdot3^{m-3}}\cdot s^{(1)}_{n, m-1, k-4\cdot3^{m-3}}, & k\in K_{13},\\
0, & k\in K_{14}.
\end{cases}
\]
Moreover, since $s^{(2)}_{n, m-1, 0}=s^{(1)}_{n, m-1, 0}=1$ if $m=n+1$, and
$s^{(2)}_{n, m-1, 0}=s^{(2)}_{n, m, 4\cdot3^{m-1}}$ if $m\le n$,
Remark \ref{rem3.2} gives 
\begin{align}\label{4.18}
s^{(2)}_{n, m-1, 0}=(s^{(1)}_{n, m-1, 0})^2+L_{13}=S_{0, 0}+L_{13}, 
\end{align}
where $v_3(L_{13})\ge v_3(S_{0, 0})+2$.

Substituting (\ref{4.17}) and (\ref{4.18}) into (\ref{4.15}), we obtain 
\begin{align}\label{4.19}
\sum_{i\in A_1}g_{2,k}'(i)=g_{2,k}'(3^{m-1})+g_{2,k}'(3^{m-1}-k)=S_{11}+S_{12}+S_{13}, 
\end{align}
where 
\[
S_{11}:=3(s^{(1)}_{n, m-1, 0})^2s^{(1)}_{n, m-1, k}=3S_{0, 0, k},\quad
S_{12}:=s^{(1)}_{n, m-1, 0}L_{12},\quad
S_{13}:=s^{(1)}_{n, m-1, 0}L_{11}+s^{(1)}_{n, m-1, k}L_{13}.
\]
By the definition of $A_1$ together with the second identity in (\ref{4.15}) and (\ref{4.18}),
we have 
\begin{align}
v_3(g_{2,k}'(3^{m-1}-k))=W_{m, k}-1=v_3(S_{0, 0, k}). \label{4.20}
\end{align}
So we have
\begin{align}\label{4.21}
v_3(S_{11})= v_3(S_{0, 0, k})+1=W_{m,k}
\ \ \textrm{and}\ \
v_3(S_{13})\ge v_3(S_{0, 0, k})+2=W_{m,k}+1.
\end{align}

It remains to deal with $S_{12}$. We first observe that 
$$S_{12}=0 \quad \Longleftrightarrow \quad k\in K_{14}
\quad \Longleftrightarrow \quad A_2=\varnothing.$$
If $k\in K_{14}$, then $S_{12}=0$ and $A_2=\varnothing$. 
Hence, by (\ref{4.19}) and (\ref{4.21}),  
$$
v_3(S_A')=v_3\Big(\sum_{i\in A}g_{2,k}'(i)\Big)
=v_3\Big(\sum_{i\in A_1}g_{2,k}'(i)\Big)=v_3(S_{11})=W_{m,k}. 
$$
Thus (\ref{4.9}) holds when $k\in K_{14}$.  

It remains to consider $k\in K_{11}\cup K_{12}\cup K_{13}$. In this case, 
$S_{12}\ne 0$ and $A_2\ne \varnothing$. We claim that 
\begin{align}
v_3\Big(S_{12}+\sum_{i\in A_2}g_{2,k}'(i)\Big)\ge W_{m,k}+1.\label{4.22}
\end{align}
Assuming (\ref{4.22}), it follows from (\ref{4.19}) and (\ref{4.21}) that
\begin{align*}
v_3(S_A')&=v_3\Big(\sum_{i\in A}g_{2,k}'(i)\Big)
=v_3\Big(S_{11}+S_{12}+S_{13}+\sum_{i\in A_2}g_{2,k}'(i)\Big)=v_3(S_{11})=W_{m,k}.
\end{align*}
Thus (\ref{4.9}) holds in Case 1, once (\ref{4.22}) is proved. 

We now prove (\ref{4.22}). We do this according to the 
three possible forms of $A_2$, corresponding to $k\in K_{11}$, $k\in K_{12}$, 
and $k\in K_{13}$. In each subcase, we combine $S_{12}$ with the terms indexed 
by $A_2$ and show that the resulting sum has 3-adic valuation at least $W_{m,k}+1$. 


\smallskip
\noindent\emph{Subcase 1.1: $k\in K_{11}$.}

In this subcase,  $m\ge 4$ and $A_2=3^{m-1}-\{2\cdot3^{m-3}, k-2\cdot3^{m-3}\}$. 
Moreover, 
\begin{align}\label{4.23}
S_{12}&=s^{(1)}_{n, m-1,0}L_{12}=2S_{0, 2\cdot3^{m-3}, k-2\cdot3^{m-3}}.
\end{align}
Since $2\le k-2\cdot3^{m-3}\le 2\cdot3^{m-3}-2$, Remark \ref{rem3.2} gives 
\begin{equation}\label{4.24}
\begin{aligned}
g_{2,k}'(3^{m-1}-2\cdot3^{m-3})
&=s^{(1)}_{n, m-1,2\cdot3^{m-3}}\cdot s^{(2)}_{n, m-1,k-2\cdot3^{m-3}}
=2S_{0, 2\cdot3^{m-3}, k-2\cdot3^{m-3}}+L_{111}, \\
g_{2,k}'(3^{m-1}-(k-2\cdot3^{m-3}))
&=s^{(1)}_{n, m-1, k-2\cdot3^{m-3}}\cdot s^{(2)}_{n, m-1, 2\cdot3^{m-3}}
=2S_{0, 2\cdot3^{m-3}, k-2\cdot3^{m-3}}+L_{112},
\end{aligned}
\end{equation}
where $\min\{v_3(L_{111}),v_3(L_{112})\}\ge v_3(S_{0, 2\cdot3^{m-3}, k-2\cdot3^{m-3}})+1$. 

Since $A_2=3^{m-1}-\{2\cdot3^{m-3}, k-2\cdot3^{m-3}\}$ and 
\begin{align}\label{4.26}
v_3(g_{2,k}'(3^{m-1}-2\cdot3^{m-3}))=W_{m,k}=
v_3(S_{0, 2\cdot3^{m-3}, k-2\cdot3^{m-3}}),
\end{align}
from (\ref{4.23}) to (\ref{4.26}), we derive that
\begin{align*}
v_3\Big(S_{12}+\sum_{i\in A_2}g_{2,k}'(i)\Big)
=v_3(6S_{0, 2\cdot3^{m-3}, k-2\cdot3^{m-3}}+L_{111}+L_{112})
\ge v_3(S_{0, 2\cdot3^{m-3}, k-2\cdot3^{m-3}})+1
=W_{m,k}+1.
\end{align*}

Thus (\ref{4.22}) is proved in this subcase.

\smallskip
\noindent\emph{Subcase 1.2: $k\in K_{12}$.}

In this subcase, $m\ge 3$ and $k=4\cdot3^{m-3}$. We have
\begin{align}\label{4.27}
S_{12}=s^{(1)}_{n, m-1, 0}L_{12}
=s^{(1)}_{n, m-1, 0} (s^{(1)}_{n, m-1, 2\cdot3^{m-3}})^2
=S_{0, 2\cdot3^{m-3}, 2\cdot3^{m-3}}
\end{align}
and $A_{2}=\{3^{m-1}-2\cdot3^{m-3}\}$.
It follows from Remark \ref{rem3.2} that
\begin{align}\label{4.28}
&g_{2, k}'(3^{m-1}-2\cdot3^{m-3})
=s^{(1)}_{n, m-1, 2\cdot3^{m-3}}\cdot s^{(2)}_{n, m-1, 2\cdot3^{m-3}}
=2S_{0, 2\cdot3^{m-3}, 2\cdot3^{m-3}}+L_{121}, 
\end{align}
where $v_3(L_{121})\ge v_3(S_{0, 2\cdot3^{m-3}, 2\cdot3^{m-3}})+1$.
Therefore, by (\ref{4.27}) and (\ref{4.28}), we obtain  
\begin{align*}
v_3\Big(S_{12}+\sum_{i\in A_2}g_{2,k}'(i)\Big)
&=v_3(3S_{0, 2\cdot3^{m-3}, 2\cdot3^{m-3}}+L_{121})
\ge v_3(S_{0, 2\cdot3^{m-3}, 2\cdot3^{m-3}})+1\\
&=v_3(g_{2, k}'(3^{m-1}-2\cdot3^{m-3}))+1=W_{m,k}+1.
\end{align*} 
Thus (\ref{4.22}) is proved in this subcase.

\smallskip
\noindent\emph{Subcase 1.3: $k\in K_{13}$.}

In this subcase, $m\ge 4$ and $2+4\cdot3^{m-3}\le k\le 2\cdot3^{m-2}-2$. 
Moreover, 
\begin{align}
S_{12}
&=s^{(1)}_{n, m-1, 0}L_{12}
= 2S_{0, 2\cdot3^{m-3}, k-2\cdot3^{m-3}}
+2S_{0, 4\cdot3^{m-3}, k-4\cdot3^{m-3}}\label{4.29}
\end{align}
and $A_{2}=3^{m-1}-\{2\cdot3^{m-3}, k-2\cdot3^{m-3},
4\cdot3^{m-3}, k-4\cdot3^{m-3}\}$. 
The two terms corresponding to $2\cdot3^{m-3}$ and $k-2\cdot3^{m-3}$ 
are handled as in Subcase 1.1. Since $k-2\cdot3^{m-3}\in K_{1}$,
(\ref{4.24}) and (\ref{4.26}) remain valid. 

For the remaining two terms, Remark \ref{rem3.2} gives 
\begin{align}
g_{2,k}'(3^{m-1}-4\cdot3^{m-3})
&=s^{(1)}_{n, m-1, 4\cdot3^{m-3}}\cdot s^{(2)}_{n, m-1, k-4\cdot3^{m-3}}
=2S_{0, 4\cdot3^{m-3}, k-4\cdot3^{m-3}}+L_{131},\label{4.30}\\  
g_{2,k}'(3^{m-1}-(k-4\cdot3^{m-3}))
&=s^{(1)}_{n, m-1, k-4\cdot3^{m-3}}\cdot s^{(2)}_{n, m-1, 4\cdot3^{m-3}}
= 2S_{0, 4\cdot3^{m-3}, k-4\cdot3^{m-3}}+L_{132},\label{4.31}
\end{align}
where $\min\{v_3(L_{131}), v_3(L_{132})\}\ge v_3(S_{0, 4\cdot3^{m-3}, k-4\cdot3^{m-3}})+1$.
Note that
\begin{align}\label{4.32}
v_3(g_{2,k}'(3^{m-1}-4\cdot3^{m-3}))=W_{m,k}=
v_3(S_{0, 4\cdot3^{m-3}, k-4\cdot3^{m-3}}).
\end{align}
Combining (\ref{4.24}), (\ref{4.26}) and 
(\ref{4.29}) to (\ref{4.32}), we obtain 
\begin{align*}
v_3\Big(S_{12}+\sum_{i\in A_2}g_{2,k}'(i)\Big)
&=v_3(6S_{0, 2\cdot3^{m-3}, k-2\cdot3^{m-3}}+L_{111}+L_{112}
+6S_{0, 4\cdot3^{m-3}, k-4\cdot3^{m-3}}+L_{131}+L_{132})\\
&\ge \min\{v_3(S_{0, 2\cdot3^{m-3}, k-2\cdot3^{m-3}}),
v_3(S_{0, 4\cdot3^{m-3}, k-4\cdot3^{m-3}})\}+1 = W_{m,k}+1.
\end{align*}
Thus (\ref{4.22}) is proved in this subcase. 

Combining Subcases 1.1 to 1.3, we have proved (\ref{4.22}). 
Hence (\ref{4.9}) holds in Case 1.


\smallskip
\noindent\emph{Case 2: $k\in K_{2}$.}

Then $m\ge 3$,
$A_1=3^{m-1}-\{0, 2\cdot3^{m-2}, k-2\cdot3^{m-2}\}$. 
We first record the three terms indexed by $A_1$. 
Recall from the first identity in (\ref{4.15}) that 
$$g_{2,k}'(3^{m-1})=s^{(1)}_{n, m-1,0}\cdot s^{(2)}_{n, m-1, k}. $$
In addition, we have
\begin{align}
&g_{2,k}'(3^{m-1}-2\cdot3^{m-2})=g_{2,k}'(3^{m-2})
=s^{(1)}_{n, m-2,0}\cdot s^{(2)}_{n, m-1, k-2\cdot3^{m-2}}, \label{4.33}\\
&g_{2,k}'(3^{m-1}-(k-2\cdot3^{m-2}))
=s^{(1)}_{n, m-1, k-2\cdot3^{m-2}}\cdot s^{(2)}_{n, m-1, 2\cdot3^{m-2}}.\label{4.34}
\end{align}

We now expand the three $s^{(2)}$-terms appearing above. 
First, setting $(n,m,k)\mapsto(n, m-1, 2\cdot3^{m-2})$ in Remark \ref{rem3.2}, we get 
\begin{align}\label{4.35}
s^{(2)}_{n, m-1, 2\cdot3^{m-2}}
=2s^{(1)}_{n, m-1, 0}\cdot s^{(1)}_{n, m-1, 2\cdot3^{m-2}}+L_{21}
=2s^{(1)}_{n, m-2, 0}\cdot s^{(1)}_{n, m-1, 0}+L_{21}
=2S'_{0,0}+L_{21}, 
\end{align}
where $L_{21}\in \mathbb{Z}$ and $v_3(L_{21}) \ge v_3(S'_{0,0})+2$.

Next, since $m\ge 3$ and $2+2\cdot 3^{m-2}\le k\le 4\cdot 3^{m-2}-2$, 
Remark \ref{rem3.2} gives 
\begin{align}
s^{(2)}_{n, m-1, k}
=2s^{(1)}_{n, m-2, 0}\cdot s^{(1)}_{n, m-1, k-2\cdot3^{m-2}}+L_{22}+L_{23}
=2S'_{0, k-2\cdot3^{m-2}}+L_{22}+L_{23}, \label{4.36}
\end{align}
where $L_{22}, L_{23}\in \mathbb{Z}$,
$v_3(L_{23})\ge v_3(S'_{0, k-2\cdot3^{m-2}})+2$,
$v_3(L_{22})\ge v_3(S'_{0, k-2\cdot3^{m-2}})+1$ with 
\[
L_{22}=
\begin{cases}
2s^{(1)}_{n,m-2,2}s^{(1)}_{n,m-1,0}
+2s^{(1)}_{n,m-1,2\cdot3^{m-3}}s^{(1)}_{n,m-1,k-2\cdot3^{m-3}}
+2s^{(1)}_{n,m-1,4\cdot3^{m-3}}s^{(1)}_{n,m-1,k-4\cdot3^{m-3}},
& k\in K_{21},\\[1mm]
2s^{(1)}_{n,m-1,2\cdot3^{m-3}}s^{(1)}_{n,m-1,k-2\cdot3^{m-3}}
+2s^{(1)}_{n,m-1,4\cdot3^{m-3}}s^{(1)}_{n,m-1,k-4\cdot3^{m-3}},
& k\in K_{22},\\[1mm]
2s^{(1)}_{n,m-2,2}s^{(1)}_{n,m-1,0}
+\left(s^{(1)}_{n,m-1,4\cdot3^{m-3}}\right)^2,
& k\in K_{23},\ m=3,\\[1mm]
\left(s^{(1)}_{n,m-1,4\cdot3^{m-3}}\right)^2,
& k\in K_{23},\ m\ge4,\\[1mm]
2s^{(1)}_{n,m-1,4\cdot3^{m-3}}s^{(1)}_{n,m-1,k-4\cdot3^{m-3}},
& k\in K_{24},\\[1mm]
0, & k\in K_{25}\cup K_{26}.
\end{cases}
\]

Finally, since $k-2\cdot 3^{m-2}\in K_{1}$,
replacing $k$ by $k-2\cdot3^{m-2}$ in (\ref{4.17}) gives 
\begin{align}
s^{(2)}_{n, m-1, k-2\cdot3^{m-2}}
=2s^{(1)}_{n, m-1, 0}\cdot s^{(1)}_{n, m-1, k-2\cdot3^{m-2}}+L_{24}+L_{25}
=2S_{0, k-2\cdot3^{m-2}}+L_{24}+L_{25}, \label{4.37}
\end{align}
where $L_{24}, L_{25}\in \mathbb{Z}$,
$v_3(L_{25})\ge v_3(S_{0, k-2\cdot3^{m-2}})+2$ and
$v_3(L_{24})\ge v_3(S_{0, k-2\cdot3^{m-2}})+1$ with
\[
L_{24}=
\begin{cases}
2s^{(1)}_{n,m-1,2\cdot3^{m-3}}s^{(1)}_{n,m-1,k-8\cdot3^{m-3}},
& k\in K_{24},\\[1mm]
\left(s^{(1)}_{n,m-1,2\cdot3^{m-3}}\right)^2,
& k\in K_{25},\\[1mm]
2s^{(1)}_{n,m-1,2\cdot3^{m-3}}s^{(1)}_{n,m-1,k-8\cdot3^{m-3}}
+2s^{(1)}_{n,m-1,4\cdot3^{m-3}}s^{(1)}_{n,m-1,k-10\cdot3^{m-3}},
& k\in K_{26},\\[1mm]
0, & k\in K_{21}\cup K_{22}\cup K_{23}.
\end{cases}
\]

Substituting (\ref{4.35}) to (\ref{4.37}) into the first identity in (\ref{4.15}), (\ref{4.33}), 
and (\ref{4.34}), we obtain 
\begin{align}\label{4.38}
\sum_{i\in A_1}g_{2,k}'(i)
=g_{2,k}'(3^{m-1})+ g_{2,k}'(3^{m-2})+g_{2,k}'(3^{m-1}-(k-2\cdot3^{m-2}))
=S_{21}+S_{22}+S_{23},
\end{align}
where
\begin{align*}
S_{21}&:=6s^{(1)}_{n, m-2, 0}\cdot s^{(1)}_{n, m-1, 0}\cdot s^{(1)}_{n, m-1, k-2\cdot3^{m-2}}
=6S'_{0, 0, k-2\cdot3^{m-2}},\\
S_{22}&:=s^{(1)}_{n, m-2, 0}L_{24}+s^{(1)}_{n, m-1, 0}L_{22},\quad 
S_{23}:=s^{(1)}_{n, m-2, 0}L_{25}+s^{(1)}_{n, m-1, 0}L_{23}
+s^{(1)}_{n, m-1, k-2\cdot3^{m-2}}L_{21}.
\end{align*}
By the first identity in (\ref{4.15}) and (\ref{4.36}), we have 
\begin{align}
&v_3(g_{2,k}'(3^{m-1}))=W_{m, k}-1
=v_3(S'_{0, 0, k-2\cdot3^{m-2}}).\label{4.39}
\end{align}
Therefore, 
\begin{align}
v_3(S_{21})
&=v_3(S'_{0, 0, k-2\cdot3^{m-2}})+1=W_{m,k}\label{4.40}
\end{align}
and the bounds for $L_{21}, L_{23}, L_{25}$ imply 
\begin{align}\label{4.41}
v_3(S_{23})\ge v_3(S'_{0, 0, k-2\cdot3^{m-2}})+2=W_{m,k}+1.
\end{align}

It remains to control $S_{22}$. We claim that 
\begin{align}
v_3\Big(S_{22}+\sum_{i\in A_2}g_{2,k}'(i)\Big)\ge W_{m,k}+1.\label{4.42}
\end{align}
Assuming (\ref{4.42}), it follows from 
(\ref{4.38}), (\ref{4.40}) and (\ref{4.41}) that
\begin{align*}
v_3(S_A')&=v_3\Big(\sum_{i\in A}g_{2,k}'(i)\Big)
=v_3\Big(\sum_{i\in A_1}g_{2,k}'(i)+\sum_{i\in A_2}g_{2,k}'(i)\Big)\\
&=v_3\Big(S_{21}+S_{22}+S_{23}
+\sum_{i\in A_2}g_{2,k}'(i)\Big)=v_3(S_{21})=W_{m,k}.
\end{align*}
Thus (\ref{4.9}) holds in Case 2 once (\ref{4.42}) is proved. 

We now prove (\ref{4.42}). The argument follows the same pattern as in 
Case 1: the secondary term $S_{22}$ is combined with the contribution from $A_2$. 
Since $A_2$ has six possible forms, corresponding to $k\in K_{21}, \ldots, K_{26}$, 
we treat these cases separately. In each case, we prove that the combined contribution 
has 3-adic valuation at least $W_{m,k}+1$.

\smallskip
\noindent\emph{Subcase 2.1: $k\in K_{21}$.}

In this subcase, $k=2+2\cdot3^{m-2}$, $m\ge 4$, $L_{22}\ne 0, L_{24}=0$,
and 
\[
A_2=3^{m-1}-\{k, 2\cdot3^{m-3}, k-2\cdot3^{m-3},
4\cdot3^{m-3}, k-4\cdot3^{m-3}\}.
\]
Moreover, 
\begin{align}
S_{22}&=s^{(1)}_{n, m-1, 0}L_{22}
=2S'_{2, 0, 0}+2S_{0, 2\cdot3^{m-3}, k-2\cdot3^{m-3}}
+2S_{0, 4\cdot3^{m-3}, k-4\cdot3^{m-3}}.\label{4.43}
\end{align}
Note that $s^{(1)}_{n, m-1, k}=s^{(1)}_{n, m-2, 2}$, 
the second identity in (\ref{4.15}) and (\ref{4.18}) remain valid, 
and we have
\begin{align}\label{4.44}
v_3(g_{2, k}'(3^{m-1}-k))=W_{m,k}=v_3(S'_{2, 0, 0}).
\end{align}

Since $k-2\cdot3^{m-3}, k-4\cdot3^{m-3}\in K_{1}$, the estimates 
(\ref{4.24}), (\ref{4.26}) and (\ref{4.30}) to (\ref{4.32}) remain valid. 
Hence by the second identity in (\ref{4.15}), (\ref{4.18}), 
(\ref{4.24}), (\ref{4.26}), (\ref{4.30}) to (\ref{4.32}),
(\ref{4.43}) and (\ref{4.44}) we obtain 
\begin{align*}
v_3\Big(S_{22}+\sum_{i\in A_2}g_{2,k}'(i)\Big)
&=v_3(3S'_{2, 0, 0}+s^{(1)}_{n, m-2, 2}L_{13}
+6S_{0, 2\cdot3^{m-3}, k-2\cdot3^{m-3}}+L_{111}+L_{112}\\
&\quad \quad \ \ +6S_{0, 4\cdot3^{m-3}, k-4\cdot3^{m-3}}+L_{131}+L_{132})\\
&\ge \min\{v_3(S'_{2, 0, 0}),
v_3(S_{0, 2\cdot3^{m-3}, k-2\cdot3^{m-3}}),
v_3(S_{0, 4\cdot3^{m-3}, k-4\cdot3^{m-3}})\}+1=W_{m,k}+1.
\end{align*}
Thus (\ref{4.42}) is proved in this subcase.


\smallskip
\noindent\emph{Subcase 2.2: $k\in K_{22}$.}

In this subcase, $L_{22}\ne 0$, $L_{24}=0$, and 
$A_2=3^{m-1}-\{2\cdot3^{m-3}, k-2\cdot3^{m-3},
4\cdot3^{m-3}, k-4\cdot3^{m-3}\}$. Moreover, 
\begin{align}
S_{22}&=s^{(1)}_{n, m-1, 0}L_{22}
=2S_{0, 2\cdot3^{m-3}, k-2\cdot3^{m-3}}+2S_{0, 4\cdot3^{m-3}, k-4\cdot3^{m-3}}.\label{4.45}
\end{align}

Since $k-2\cdot3^{m-3}, k-4\cdot3^{m-3}\in K_{1}$,
(\ref{4.24}), (\ref{4.26}) and (\ref{4.30}) to (\ref{4.32}) remain valid.
Combining them with (\ref{4.45}), we get 
\begin{align*}
v_3\Big(S_{22}+\sum_{i\in A_2}g_{2,k}'(i)\Big)
&=v_3(6S_{0, 2\cdot3^{m-3}, k-2\cdot3^{m-3}}+L_{111}+L_{112}
+6S_{0, 4\cdot3^{m-3}, k-4\cdot3^{m-3}}+L_{131}+L_{132})\\
&\ge \min\{v_3(S_{0, 2\cdot3^{m-3}, k-2\cdot3^{m-3}}),
v_3(S_{0, 4\cdot3^{m-3}, k-4\cdot3^{m-3}})\}+1=W_{m,k}+1.
\end{align*}
This proves (\ref{4.42}) in this subcase.


\smallskip
\noindent\emph{Subcase 2.3: $k\in K_{23}$.}

Here $k=8\cdot3^{m-3}$. We consider two cases according to 
whether $m=3$ or $m\ge 4$. 

If $m=3$, then $A_2=3^{m-1}-\{k, 4\cdot3^{m-3}\}$ and
\begin{align}
S_{22}&=s^{(1)}_{n, m-1, 0}L_{22}
=2S'_{2, 0, 0}+S_{0, 4\cdot3^{m-3}, 4\cdot3^{m-3}}. \label{4.46}
\end{align}
Since $s^{(1)}_{n, m-1, k}=s^{(1)}_{n, m-2, 2}$, as in Subcase 2.1, 
the second identity in (\ref{4.15}), (\ref{4.18}) and (\ref{4.44}) hold. Moreover, 
by Remark \ref{rem3.2},
\begin{align}
g_{2,k}'(3^{m-1}-4\cdot3^{m-3})
=s^{(1)}_{n, m-1, 4\cdot3^{m-3}}\cdot s^{(2)}_{n, m-1, 4\cdot3^{m-3}}
=2S_{0, 4\cdot3^{m-3}, 4\cdot3^{m-3}}+L_{231}, \label{4.47}
\end{align}
where $v_3(L_{231})\ge v_3(S_{0, 4\cdot3^{m-3}, 4\cdot3^{m-3}})+1$.
Note that
\begin{align}
&v_3(g_{2,k}'(3^{m-1}-4\cdot3^{m-3}))=W_{m, k}
=v_3(S_{0, 4\cdot3^{m-3}, 4\cdot3^{m-3}}). \label{4.48}
\end{align}
Thus combining the second identity in (\ref{4.15}), 
(\ref{4.18}), (\ref{4.44}), (\ref{4.46}) to (\ref{4.48}), we get 
\begin{align*}
v_3\Big(S_{22}+\sum_{i\in A_2}g_{2,k}'(i)\Big)
&=v_3(3S'_{2, 0, 0} +s^{(1)}_{n, m-2, 2}L_{13}
+3S_{0, 4\cdot3^{m-3}, 4\cdot3^{m-3}}+L_{231})\\
&\ge \min\{v_3(S'_{2, 0, 0}), v_3(S_{0, 4\cdot3^{m-3}, 4\cdot3^{m-3}})\}+1 = W_{m,k}+1.
\end{align*}

If $m\ge 4$, then $A_2=\{3^{m-1}-4\cdot3^{m-3}\}$ and
\begin{align}
S_{22}=s^{(1)}_{n, m-1, 0}L_{22}
=s^{(1)}_{n, m-1, 0} (s^{(1)}_{n, m-1, 4\cdot3^{m-3}})^2
=S_{0, 4\cdot3^{m-3}, 4\cdot3^{m-3}}. \label{4.49}
\end{align}
By (\ref{4.47}) to (\ref{4.49}), we obtain
\begin{align*}
v_3\Big(S_{22}+\sum_{i\in A_2}g_{2,k}'(i)\Big)
=v_3(3S_{0, 4\cdot3^{m-3}, 4\cdot3^{m-3}}+L_{231})
\ge v_3(S_{0, 4\cdot3^{m-3}, 4\cdot3^{m-3}})+1= W_{m,k}+1.
\end{align*}
Thus (\ref{4.42}) is proved in this subcase.


\smallskip
\noindent\emph{Subcase 2.4: $k\in K_{24}$.}

In this subcase, $m\ge 4$, $2+8\cdot3^{m-3}\le k\le 10\cdot3^{m-3}-2$,
and $A_2=3^{m-1}-\{2\cdot3^{m-3}, 4\cdot3^{m-3},  
k-4\cdot3^{m-3}, k-8\cdot3^{m-3}\}$. Moreover,
\begin{align}
S_{22}&=s^{(1)}_{n, m-2, 0}L_{24}+s^{(1)}_{n, m-1, 0}L_{22}
=2S'_{0, 2\cdot3^{m-3}, k-8\cdot3^{m-3}}
+2S_{0, 4\cdot3^{m-3}, k-4\cdot3^{m-3}}.  \label{4.50}
\end{align}

Since $k-4\cdot3^{m-3}\in K_1$ and $k-2\cdot3^{m-3}, 8\cdot3^{m-3}\in K_2$,
(\ref{4.30}) and (\ref{4.31}) hold, and Remark {\ref{rem3.2}} gives 
\begin{equation}\label{4.51}
\begin{aligned}
g_{2,k}'(3^{m-1}-2\cdot3^{m-3})
&=s^{(1)}_{n, m-1, 2\cdot3^{m-3}}\cdot s^{(2)}_{n, m-1, k-2\cdot3^{m-3}}
=2S'_{0, 2\cdot3^{m-3}, k-8\cdot3^{m-3}}+L_{241},\\
g_{2,k}'(3^{m-1}-(k-8\cdot3^{m-3}))
&=s^{(1)}_{n, m-1, k-8\cdot3^{m-3}}\cdot s^{(2)}_{n, m-1, 8\cdot3^{m-3}}
=2S'_{0, 2\cdot3^{m-3}, k-8\cdot3^{m-3}}+L_{242},
\end{aligned}
\end{equation}
where $\min\{v_3(L_{241}), v_3(L_{242})\}\ge v_3(S'_{0, 2\cdot3^{m-3}, k-8\cdot3^{m-3}})+1$.
Note that (\ref{4.32}) holds and
\begin{align}
v_3(g_{2,k}'(3^{m-1}-2\cdot3^{m-3}))=W_{m, k}
=v_3(S'_{0, 2\cdot3^{m-3}, k-8\cdot3^{m-3}}). \label{4.53}
\end{align}

Combining (\ref{4.30}) to (\ref{4.32}) and (\ref{4.50}) to (\ref{4.53}), we obtain 
\begin{align*}
v_3\Big(S_{22}+\sum_{i\in A_2}g_{2,k}'(i)\Big)
&=v_3(6S'_{0, 2\cdot3^{m-3}, k-8\cdot3^{m-3}}+L_{131}+L_{132}
+6S_{0, 4\cdot3^{m-3}, k-4\cdot3^{m-3}}+L_{241}+L_{242})\\
&\ge  \min\{v_3(S'_{0, 2\cdot3^{m-3}, k-8\cdot3^{m-3}}),
v_3(S_{0, 4\cdot3^{m-3}, k-4\cdot3^{m-3}})\}+1 = W_{m,k}+1.
\end{align*}
Thus (\ref{4.42}) is proved in this subcase.

\smallskip
\noindent\emph{Subcase 2.5: $k\in K_{25}$.}

In this subcase, $k=10\cdot3^{m-3}$, $L_{22}=0$, 
and $A_2=\{3^{m-1}-2\cdot3^{m-3}\}$. Moreover, 
\begin{align}
&S_{22}=s^{(1)}_{n, m-2, 0}L_{24}
=s^{(1)}_{n, m-2, 0}(s^{(1)}_{n, m-1, 2\cdot3^{m-3}})^2
=S'_{0, 2\cdot3^{m-3}, 2\cdot3^{m-3}}. \label{4.54}
\end{align}
Since $8\cdot3^{m-3}\in K_2$, Remark {\ref{rem3.2}} gives 
\begin{align}
g_{2,k}'(3^{m-1}-2\cdot3^{m-3})
=s^{(1)}_{n, m-1, 2\cdot3^{m-3}}\cdot s^{(2)}_{n, m-1, 8\cdot3^{m-3}}
=2S'_{0, 2\cdot3^{m-3}, 2\cdot3^{m-3}}+L_{251},\label{4.55}
\end{align}
where $v_3(L_{251})\ge v_3(S'_{0, 2\cdot3^{m-3}, 2\cdot3^{m-3}})+1$.

Note that
\begin{align}
v_3(g_{2,k}'(3^{m-1}-2\cdot3^{m-3}))=W_{m, k}
=v_3(S'_{0, 2\cdot3^{m-3}, 2\cdot3^{m-3}}). \label{4.56}
\end{align}
Then it follows from (\ref{4.54}) to (\ref{4.56}) that
\begin{align*}
v_3\Big(S_{22}+\sum_{i\in A_2}g_{2,k}'(i)\Big)
=v_3(3S'_{0, 2\cdot3^{m-3}, 2\cdot3^{m-3}}+L_{251})
\ge v_3(S'_{0, 2\cdot3^{m-3}, 2\cdot3^{m-3}})+1
= W_{m,k}+1.
\end{align*}
This proves (\ref{4.42}) in this subcase.

\smallskip
\noindent\emph{Subcase 2.6: $k\in K_{26}$.}

In this subcase, $m\ge 4$, $2+10\cdot3^{m-3}\le k\le 4\cdot3^{m-2}-2$, 
$L_{22}=0$, and 
\[
A_2=3^{m-1}-\{2\cdot3^{m-3},
4\cdot3^{m-3}, k-8\cdot3^{m-3}, k-10\cdot3^{m-3}\}. 
\]
Moreover, 
\begin{align}
S_{22}&=s^{(1)}_{n, m-2, 0}L_{24}
=2S'_{0, 2\cdot3^{m-3}, k-8\cdot3^{m-3}}
+2S'_{0, 4\cdot3^{m-3}, k-10\cdot3^{m-3}}.\label{4.57}
\end{align}
Since $k-2\cdot3^{m-3}, k-4\cdot3^{m-3}, 10\cdot3^{m-3}\in K_2$,
(\ref{4.51}) remains valid. By Remark {\ref{rem3.2}}, we have 
\begin{equation}\label{4.58}
\begin{aligned}
g_{2,k}'(3^{m-1}-4\cdot3^{m-3})
&=s^{(1)}_{n, m-1, 4\cdot3^{m-3}}\cdot s^{(2)}_{n, m-1, k-4\cdot3^{m-3}}
=2S'_{0, 4\cdot3^{m-3}, k-10\cdot3^{m-3}}+L_{261},\\
g_{2,k}'(3^{m-1}-(k-10\cdot3^{m-3}))
&=s^{(1)}_{n, m-1, k-10\cdot3^{m-3}}\cdot s^{(2)}_{n, m-1, 10\cdot3^{m-3}}
=2S'_{0, 4\cdot3^{m-3}, k-10\cdot3^{m-3}}+L_{262},
\end{aligned}
\end{equation}
where $\min\{v_3(L_{261}), v_3(L_{262})\}\ge
v_3(S'_{0, 4\cdot3^{m-3}, k-10\cdot3^{m-3}})+1$.

Note that (\ref{4.53}) holds and we have
\begin{align}
v_3(g_{2,k}'(3^{m-1}-4\cdot3^{m-3}))=W_{m, k}
=v_3(S'_{0, 4\cdot3^{m-3}, k-10\cdot3^{m-3}}). \label{4.60}
\end{align}
By (\ref{4.51}), (\ref{4.53}) and (\ref{4.57}) to (\ref{4.60}), we obtain
\begin{align*}
v_3\Big(S_{22}+\sum_{i\in A_2}g_{2,k}'(i)\Big)
&=v_3(6S'_{0, 2\cdot3^{m-3}, k-8\cdot3^{m-3}}+L_{241}+L_{242}
+6S'_{0, 4\cdot3^{m-3}, k-10\cdot3^{m-3}}+L_{261}+L_{262})\\
&\ge \min\{v_3(S'_{0, 2\cdot3^{m-3}, k-8\cdot3^{m-3}}),
v_3(S'_{0, 4\cdot3^{m-3}, k-10\cdot3^{m-3}})\}+1 = W_{m,k}+1.
\end{align*}
Thus (\ref{4.42}) is proved in this subcase.

Combining Subcases 2.1 to 2.6, we have proved (\ref{4.42}). 
Hence (\ref{4.9}) holds in Case 2.


\smallskip
\noindent\emph{Case 3: $k\in K_{4}$.}

Then $m\ge 3$ and $A_1=3^{m-1}-\{2\cdot3^{m-2},k-4\cdot3^{m-2}\}$.
Thus the contribution from $A_1$ consists of two terms. 
Recall from (\ref{4.33}) that 
$g_{2,k}'(3^{m-2})=s^{(1)}_{n, m-2, 0}\cdot s^{(2)}_{n, m-1, k-2\cdot3^{m-2}}.$
Moreover, 
\begin{align}\label{4.61}
&g_{2,k}'(3^{m-1}-(k-4\cdot3^{m-2}))
=s^{(1)}_{n, m-1, k-4\cdot3^{m-2}}\cdot s^{(2)}_{n, m-1, 4\cdot3^{m-2}}.
\end{align}

We now expand the two $s^{(2)}$-terms appearing above. 
Setting $(n,m,k)\mapsto(n, m-1, 4\cdot3^{m-2})$ in Remark \ref{rem3.2}, we get 
\begin{align}\label{4.62}
s^{(2)}_{n, m-1, 4\cdot3^{m-2}}=(s^{(1)}_{n, m-2, 0})^2+L_{41}
=S''_{0, 0}+L_{41},
\end{align}
where $v_3(L_{41})\ge v_3(S''_{0, 0})+2$. 
Since $k-2\cdot3^{m-2}\in K_2$,
replacing $k$ by $k-2\cdot3^{m-2}$ in (\ref{4.36}) gives 
\begin{align}\label{4.63}
s^{(2)}_{n, m-1, k-2\cdot3^{m-2}}
=2s^{(1)}_{n, m-2, 0}\cdot s^{(1)}_{n, m-1, k-4\cdot3^{m-2}}+L_{42}+L_{43}
=2S'_{0, k-4\cdot3^{m-2}}+L_{42}+L_{43},
\end{align}
where $v_3(L_{43})\ge v_3(S'_{0, k-4\cdot3^{m-2}})+2$ and
$v_3(L_{42})\ge v_3(S'_{0, k-4\cdot3^{m-2}})+1$. 
Moreover, 
\[
L_{42}=
\begin{cases}
2s^{(1)}_{n,m-2,2}s^{(1)}_{n,m-1,0}
+2s^{(1)}_{n,m-1,2\cdot3^{m-3}}s^{(1)}_{n,m-1,k-8\cdot3^{m-3}}
+2s^{(1)}_{n,m-1,4\cdot3^{m-3}}s^{(1)}_{n,m-1,k-10\cdot3^{m-3}},
& k\in K_{41},\\[1mm]
2s^{(1)}_{n,m-1,2\cdot3^{m-3}}s^{(1)}_{n,m-1,k-8\cdot3^{m-3}}
+2s^{(1)}_{n,m-1,4\cdot3^{m-3}}s^{(1)}_{n,m-1,k-10\cdot3^{m-3}},
& k\in K_{42},\\[1mm]
2s^{(1)}_{n,m-2,2}s^{(1)}_{n,m-1,0}
+\left(s^{(1)}_{n,m-1,4\cdot3^{m-3}}\right)^2,
& k\in K_{43},\ m=3,\\[1mm]
\left(s^{(1)}_{n,m-1,4\cdot3^{m-3}}\right)^2,
& k\in K_{43},\ m\ge4,\\[1mm]
2s^{(1)}_{n,m-1,4\cdot3^{m-3}}s^{(1)}_{n,m-1,k-10\cdot3^{m-3}},
& k\in K_{44},\\[1mm]
0, & k\in K_{45}.
\end{cases}
\]

Substituting (\ref{4.61}) to (\ref{4.63}) into (\ref{4.33}), we obtain 
\begin{align}\label{4.64}
\sum_{i\in A_1}g_{2,k}'(i)=
g_{2,k}'(3^{m-2})+g_{2,k}'(3^{m-1}-(k-4\cdot3^{m-2}))=S_{41}+S_{42}+S_{43},
\end{align}
where 
\begin{align*}
S_{41}&:=3(s^{(1)}_{n, m-2, 0})^2s^{(1)}_{n, m-1, k-4\cdot3^{m-2}}
=3S''_{0, 0, k-4\cdot3^{m-2}}, \\
S_{42}&:=s^{(1)}_{n, m-2, 0}L_{42}, \quad \quad
S_{43}:=s^{(1)}_{n, m-2, 0}L_{43}+s^{(1)}_{n, m-1, k-4\cdot3^{m-2}}L_{41}.
\end{align*}
By the definition of $A_1$, together with (\ref{4.61}) and (\ref{4.62}), we have 
\begin{align}\label{4.65}
&v_3(g_{2,k}'(3^{m-1}-(k-4\cdot3^{m-2})))= W_{m,k}-1
=v_3(S''_{0, 0, k-4\cdot3^{m-2}}).
\end{align}
Consequently, 
\begin{align}\label{4.66}
&v_3(S_{41})= v_3(S''_{0, 0, k-4\cdot3^{m-2}})+1= W_{m,k}, 
\end{align}
and the bounds for $L_{41}$ and $L_{43}$ imply 
\begin{align}\label{4.67}
&v_3(S_{43})\ge v_3(S''_{0, 0, k-4\cdot3^{m-2}})+2
= W_{m,k}+1.
\end{align}

It remains to deal with $S_{42}$. We first observe that 
$$
S_{42}=0
\quad\Longleftrightarrow\quad
k\in K_{45}
\quad\Longleftrightarrow\quad
A_2=\varnothing.
$$
If $k\in K_{45}$, then $S_{42} = 0$ and $A_2=\varnothing$. 
Hence by (\ref{4.64}), (\ref{4.66}) and (\ref{4.67}), 
\begin{align*}
v_3(S_A')&=v_3\Big(\sum_{i\in A}g_{2,k}'(i)\Big)
=v_3\Big(\sum_{i\in A_1}g_{2,k}'(i)\Big)
=v_3 (S_{41}+S_{43})=v_3(S_{41})=W_{m,k}.
\end{align*}
Thus (\ref{4.9}) holds when $k\in K_{45}$. 

It remains to consider $k\in \bigcup_{j=1}^4 K_{4j}$. 
In this case, $S_{42} \ne 0$ and $A_2\ne \varnothing$. 
We claim that 
\begin{align}\label{4.68}
v_3\Big(S_{42}+\sum_{i\in A_2}g_{2,k}'(i)\Big)\ge W_{m,k}+1.
\end{align}
Assuming (\ref{4.68}), it follows from (\ref{4.64}), (\ref{4.66}) and (\ref{4.67}) that
\begin{align*}
v_3(S_A')&=v_3\Big(\sum_{i\in A}g_{2,k}'(i)\Big)
=v_3\Big(\sum_{i\in A_1}g_{2,k}'(i)+\sum_{i\in A_2}g_{2,k}'(i)\Big)\\
&=v_3\Big(S_{41}+S_{42}+S_{43}+\sum_{i\in A_2}g_{2,k}'(i)\Big)
=v_3(S_{41})=W_{m,k}.
\end{align*}
Thus (\ref{4.9}) holds in Case 3 once (\ref{4.68}) is proved.

We now prove (\ref{4.68}). The argument follows the same pattern as in 
Case 1 and 2: the secondary term $S_{42}$ is combined with the contribution from $A_2$. 
Since $A_2$ has four possible forms, corresponding to $k\in K_{41}, \ldots, K_{44}$, 
we treat these cases separately. In each case, we prove that the combined contribution 
has 3-adic valuation at least $W_{m,k}+1$.


\smallskip
\noindent\emph{Subcase 3.1: $k\in K_{41}$.}

In this subcase, $k=2+4\cdot 3^{m-2}$, $m\ge 4$, and 
\[ 
A_2=3^{m-1}-\{0, k-2\cdot3^{m-2}, 2\cdot3^{m-3},
4\cdot3^{m-3}, k-8\cdot3^{m-3}, k-10\cdot3^{m-3}\}.
\]
 Moreover, 
\begin{align}\label{4.69}
S_{42}&=s^{(1)}_{n, m-2, 0}L_{42}
=2S''_{0, 2, 0}+2S'_{0, 2\cdot3^{m-3}, k-8\cdot3^{m-3}}
+2S'_{0, 4\cdot3^{m-3}, k-10\cdot3^{m-3}}.
\end{align}
Since $k=2+4\cdot 3^{m-2}$, we have $s^{(2)}_{n, m-1, k}=s^{(2)}_{n, m-2, 2}$.
Thus Remark \ref{rem3.2} gives 
\begin{equation}\label{4.70}
\begin{aligned}
g_{2,k}'(3^{m-1})
&=s^{(1)}_{n, m-1, 0}\cdot s^{(2)}_{n, m-1, k}
=2S''_{0, 2, 0}+L_{411},\\
g_{2,k}'(3^{m-1}-(k-2\cdot3^{m-2}))
&=g_{2,k}'(3^{m-2}-2)
=s^{(1)}_{n, m-2, 2}\cdot s^{(2)}_{n, m-1, 2\cdot3^{m-2}}
=2S''_{0, 2, 0}+L_{412},
\end{aligned}
\end{equation}
where $\min\{v_3(L_{411}), v_3(L_{412})\}\ge v_3(S''_{0, 2, 0})+1$.

Since $k-2\cdot3^{m-3}, k-4\cdot3^{m-3}\in K_2$,
(\ref{4.51}), (\ref{4.53}), (\ref{4.58}) and (\ref{4.60}) hold, and we have
\begin{align}\label{4.72}
&v_3(g_{2,k}'(3^{m-1}))=W_{m, k}=v_3(S''_{0, 2, 0}).
\end{align}
Combining (\ref{4.69}) to (\ref{4.72}), (\ref{4.51}), (\ref{4.53}), 
(\ref{4.58}) and (\ref{4.60}), we obtain
\begin{align*}
v_3\Big(S_{42}+\sum_{i\in A_2}g_{2,k}'(i)\Big)
&=v_3(6S''_{0, 2, 0}+L_{411}+L_{412}
+6S'_{0, 2\cdot3^{m-3}, k-8\cdot3^{m-3}}+L_{241}+L_{242}\\
&\quad \ \ \ \ +6S'_{0, 4\cdot3^{m-3}, k-10\cdot3^{m-3}}+L_{261}+L_{262})\\
&\ge \min\{v_3(S''_{0, 2, 0}),
v_3(S'_{0, 2\cdot3^{m-3}, k-8\cdot3^{m-3}}),
v_3(S'_{0, 4\cdot3^{m-3}, k-10\cdot3^{m-3}})\}+1 = W_{m,k}+1.
\end{align*}
So (\ref{4.68}) is proved in this subcase.


\smallskip
\noindent\emph{Subcase 3.2: $k\in K_{42}$.}

In this subcase, $m\ge 4$, $4+4\cdot3^{m-2}\le k\le 14\cdot3^{m-3}-2$,
and 
\[
A_2=3^{m-1}-\{2\cdot3^{m-3}, 4\cdot3^{m-3},
k-8\cdot3^{m-3}, k-10\cdot3^{m-3}\}.
\]
 Moreover, 
\begin{align}\label{4.73}
S_{42}&=s^{(1)}_{n, m-2, 0}L_{42}
=2S'_{0, 2\cdot3^{m-3}, k-8\cdot3^{m-3}}+2S'_{0, 4\cdot3^{m-3}, k-10\cdot3^{m-3}}.
\end{align} 
Since $k-2\cdot3^{m-3}, k-4\cdot3^{m-3}\in K_2$, the estimates 
(\ref{4.51}), (\ref{4.53}), (\ref{4.58}) and (\ref{4.60}) apply. 
Combining these estimates with (\ref{4.73}) we obtain that
\begin{align*}
v_3\Big(S_{42}+\sum_{i\in A_2}g_{2,k}'(i)\Big)
&=v_3(6S'_{0, 2\cdot3^{m-3}, k-8\cdot3^{m-3}}+L_{241}+L_{242}
+6S'_{0, 4\cdot3^{m-3}, k-10\cdot3^{m-3}}+L_{261}+L_{262})\\
&\ge \min\{v_3(S'_{0, 2\cdot3^{m-3}, k-8\cdot3^{m-3}}),
v_3(S'_{0, 4\cdot3^{m-3}, k-10\cdot3^{m-3}})\}+1 = W_{m,k}+1.
\end{align*}
Thus (\ref{4.68}) is proved in this subcase.

\smallskip
\noindent\emph{Subcase 3.3: $k\in K_{43}$.}

In this subcase, $k=14\cdot 3^{m-3}$. We consider two cases 
according to whether $m=3$ or $m\ge 4$. 
If $m=3$, then $A_2=3^{m-1}-\{0, k-2\cdot3^{m-2}, 4\cdot3^{m-3}\}$ and
\begin{align}\label{4.74}
S_{42}
&=s^{(1)}_{n, m-2, 0}L_{42}
=2S''_{0, 2, 0}+S'_{0, 4\cdot3^{m-3}, 4\cdot3^{m-3}}.
\end{align}
Since $m=3$ and $k=14\cdot 3^{m-3}$, one has $s^{(2)}_{n, m-1, k}=s^{(2)}_{n, m-2, 2}$.
Thus (\ref{4.70}) and (\ref{4.72}) hold. It follows from Remark \ref{rem3.2} that
\begin{align}\label{4.75}
g_{2,k}'(3^{m-1}-4\cdot3^{m-3})
=s^{(1)}_{n, m-1, 4\cdot3^{m-3}}\cdot s^{(2)}_{n, m-1, 10\cdot3^{m-3}}
=2S'_{0, 4\cdot3^{m-3}, 4\cdot3^{m-3}}+L_{431}, 
\end{align}
where $v_3(L_{431})\ge v_3(S'_{0, 4\cdot3^{m-3}, 4\cdot3^{m-3}})+1$.
Note that
\begin{align}\label{4.76}
&v_3(g_{2,k}'(3^{m-1}-4\cdot3^{m-3}))=W_{m, k}
=v_3(S'_{0, 4\cdot3^{m-3}, 4\cdot3^{m-3}}).
\end{align}
Combining (\ref{4.74}) to (\ref{4.76}), (\ref{4.70}) and (\ref{4.72}), we get 
\begin{align*}
v_3\Big(S_{42}+\sum_{i\in A_2}g_{2,k}'(i)\Big)
&=v_3(6S''_{0, 2, 0}+L_{411}+L_{412}+3S'_{0, 4\cdot3^{m-3}, 4\cdot3^{m-3}}+L_{431})\notag\\
&\ge \min\{v_3(S''_{0, 2, 0}), v_3(S'_{0, 4\cdot3^{m-3}, 4\cdot3^{m-3}})\}+1 = W_{m,k}+1.
\end{align*}

If $m\ge 4$, then $A_2=3^{m-1}-\{4\cdot3^{m-3}\}$ and
\begin{align}\label{4.77}
S_{42}=s^{(1)}_{n, m-2, 0}L_{42}
=s^{(1)}_{n, m-2, 0}(s^{(1)}_{n, m-1, 4\cdot3^{m-3}})^2
=S'_{0, 4\cdot3^{m-3}, 4\cdot3^{m-3}}.
\end{align}
By (\ref{4.75}) to (\ref{4.77}), we obtain 
\begin{align*}
v_3\Big(S_{42}+\sum_{i\in A_2}g_{2,k}'(i)\Big)
=v_3(3S'_{0, 4\cdot3^{m-3}, 4\cdot3^{m-3}}+L_{431})
\ge v_3(S'_{0, 4\cdot3^{m-3}, 4\cdot3^{m-3}})+1=W_{m,k}+1.
\end{align*}
Thus (\ref{4.68}) is proved in this subcase.


\smallskip
\noindent\emph{Subcase 3.4: $k\in K_{44}$.}

In this subcase,  $m\ge 4$ and $A_2=3^{m-1}-\{4\cdot3^{m-3}, k-10\cdot3^{m-3}\}$. 
Moreover, 
\begin{align}\label{4.78}
S_{42}&=s^{(1)}_{n, m-2, 0}L_{42}
=2S'_{0, 4\cdot3^{m-3}, k-10\cdot3^{m-3}}.
\end{align}
Since $m\ge 4$ and $2+14\cdot3^{m-3}\le k\le 16\cdot3^{m-3}-2$,
we have $k-4\cdot3^{m-3}\in K_2$. Hence the estimates 
(\ref{4.58}) and (\ref{4.60}) apply. Combining them with (\ref{4.78}), we get 
\begin{align*}
v_3\Big(S_{42}+\sum_{i\in A_2}g_{2,k}'(i)\Big)
&=v_3(6S'_{0, 4\cdot3^{m-3}, k-10\cdot3^{m-3}}+L_{261}+L_{262})\notag\\
&\ge v_3(S'_{0, 4\cdot3^{m-3}, k-10\cdot3^{m-3}})+1=W_{m,k}+1.
\end{align*}
Thus (\ref{4.68}) is proved in this subcase.

Combining Subcases 3.1 to 3.4, we have proved (\ref{4.68}). 
Hence (\ref{4.9}) holds for $k\in K_4$.

Combining Cases 1 to 3, together with the previously treated cases 
$k\in K_3$ and $k\in K_5$, we obtain (\ref{4.9}) for all admissible $k$. 
Therefore (\ref{4.6}) follows, and the analysis of the leading contribution is complete. 

We now complete the induction step. 
By \eqref{4.3} and the above decomposition, 
\[
s^{(1)}_{n+1, m, k}=S_A+S_B+S_C.
\] 
By (\ref{4.6}), we have $v_3(S_A)=W_{m,k}$. 
Moreover, by (\ref{4.4}) and the definition of $A$, 
we have $v_3(S_B)\ge W_{m,k}+1$ and $v_3(S_C)\ge W_{m,k}+1$. 
It follows that 
\[
v_3(s^{(1)}_{n+1, m, k})=v_3(S_A+S_B+S_C)=W_{m, k}.
\] 
Thus \eqref{4.2} holds. Hence \eqref{1.4} holds for $a=1$ and even $k$ at
level $n+1$. By Lemma \ref{lem2.3}, it also holds for odd $k$. Therefore
\eqref{1.4} holds for $a=1$ at level $n+1$.

By induction, formula \eqref{1.4} holds for $a=1$ for all positive integers
$n$. Finally, Lemma \ref{lem3.1} gives the corresponding formula for $a=2$.
This completes the proof of Theorem \ref{thm1}.

\section{Consequences of the main formula}

In this section we derive three consequences of the main formula.  
The first compares two adjacent orders, namely $a3^n+1$ and $a3^n$. 
The second identifies the maximal 3-adic valuation among the families 
$\{s(3^n, k)\}$ and $\{s(2\cdot 3^n, k)\}$. The final consequence translates 
these bounds into estimates for elementary symmetric functions, giving 
a partial verification of the conjecture of Leonetti and Sanna in the present 3-adic setting.  

\subsection{Proof of Theorem \ref{thm2}}

We first prove Theorem \ref{thm2}, which compares $v_3(s(a3^n+1, k+1))$ 
with the 3-adic valuations of Stirling numbers of order $a3^n$.

Let $a\in\{1,2\}$, and let $n, k\in \mathbb{Z}^+$ with $k\le a3^n$.

We first deal with the two boundary cases. If $k=a3^n$, then 
$2\mid (k-a)$, and 
$v_3(s(a3^n+1,a3^n+1))=v_3(1)=v_3(s(a3^n, a3^n))=0$. 
Thus (\ref{1.7}) holds in this case. If $k=a3^n-1$,
then $2\nmid (k-a)$, and $v_3(s(a3^n+1,a3^n))=v_3(\binom{a3^n+1}{2})
=n$ and $v_3(s(a3^n, a3^n))=0$. Hence (\ref{1.7}) also holds for $k=a3^n-1$.
So Theorem \ref{thm2} holds when $k\in \{a3^n-1, a3^n\}$.

It remains to consider $1\le k\le a3^n-2$. By the recurrence relation for 
Stirling numbers of the first kind (see, for example, \cite{[LC]}), we have 
\begin{align}\label{5.1}
s(a3^n+1, k+1)=a3^ns(a3^n, k+1)+s(a3^n, k).
\end{align}
We distinguish two cases according to the parity of $k-a$. 

If $2\mid(k-a)$, then $2\nmid(k+1-a)$, and so we may write
$k+1=a3^n-2t-1$, where $t\in \mathbb{Z}$ with $0\le t\le \frac{a3^n-3}{2}$.
By Lemma \ref{lem2.2} and Lemma \ref{lem2.7}, we deduce that
\begin{align}\label{5.2}
v_3(a3^ns(a3^n, k+1))&=v_3(s(a3^n, a3^n-2t-1))+n
=v_3(s(a3^n, a3^n-2t))+v_3(2t+1)+2n\notag\\
&= v_3(s(a3^n, k+2))+v_3(k+1)+2n\notag\\
&\ge  v_3(s(a3^n, k))+v_3(k+1)+2 >v_3(s(a3^n, k)).
\end{align}
From (\ref{5.1}) and (\ref{5.2}), we obtain 
\[
v_3(s(a3^n+1, k+1))=v_3(s(a3^n, k)).
\]
This proves (\ref{1.7}) when $2\mid(k-a)$.

If $2\nmid(k-a)$, then we can write $k=a3^n-2t'-1$
with $t'\in \mathbb{Z}$ and $0\le t'\le \frac{a3^n-2}{2}$.
Again, by Lemma \ref{lem2.2}, we have
\begin{align}\label{5.3}
v_3(s(a3^n, k))&=v_3(s(a3^n, a3^n-2t'-1))
=v_3(s(a3^n, a3^n-2t'))+v_3(2t'+1)+n\notag\\
&=v_3(s(a3^n, k+1))+v_3(a3^n-k)+n
\ge  v_3(s(a3^n, k+1))+n.
\end{align}
Hence by (\ref{5.1}) and (\ref{5.3}) we obtain 
\begin{align*}
v_3(s(a3^n+1, k+1))&\ge \min\{v_3(a3^ns(a3^n, k+1)),v_3(s(a3^n, k))\}=v_3(s(a3^n, k+1))+n.
\end{align*}
Thus (\ref{1.7}) holds when $2\nmid(k-a)$.
This finishes the proof of Theorem \ref{thm2}. 

\subsection{Proofs of Theorems \ref{thm3} and \ref{thm4}}

We next prove Theorems \ref{thm3} and \ref{thm4}. 
Both arguments have the same structure: using Theorem \ref{thm1}, 
we compare  $v_3(s(a3^n, k))$ with the 3-adic valuation at $k=1$, 
and show that the latter gives the desired upper bound. 

\medskip
\noindent{\it Proof of Theorem \ref{thm3}.}
For $n=1$, we have $k\in\{1,2,3\}$. Since $s(3,1)=2$,
$s(3,2)=3$ and $s(3,3)=1$, (\ref{1.8}) is immediate.

Assume now that $n\ge 2$. The boundary cases are 
again immediate: $v_3(s(3^n, 3^n))=0$ and
$v_3(s(3^n, 3^n-1))=n$. Hence (\ref{1.8})
holds when $k\in \{3^n-1, 3^n\}$.

It remains to consider $1\le k\le 3^n-2$. Write $k=3^l-j$, 
where $(l,j)\in T_{1,n}$. By Theorem \ref{thm1},
\begin{align}\label{5.4}
v_3(s_{n, l, j}^{(1)})
=\frac{1}{2}(3^n-3^l)-(n-l)(3^l-j)+l-1
-v_3\Big(\Big\lfloor\frac{j}{2}\Big\rfloor\Big)+(l+v_3(j))\epsilon_j.
\end{align}

We first handle the exceptional case $n=2$. 
If $l=1$, then $j=2$ and (\ref{5.4}) gives $v_3(s_{n, l, j}^{(1)})=2<4$. 
If $l=2$, then $2\le j\le 7$, and (\ref{5.4}) gives
\[
v_3(s_{n, l, j}^{(1)})
=1-v_3\Big(\Big\lfloor\frac{j}{2}\Big\rfloor\Big)+(2+v_3(j))\epsilon_j\le 3+v_3(j)\le 4.
\]
Therefore (\ref{1.8}) holds when $n=2$.

We now assume $n\ge3$. Since $1\le l\le n$, $2\le j\le 2\cdot 3^{l-1}+1$ and $j<3^l$, 
we have $v_3(j)\le l-1$. By (\ref{5.4}), we obtain 
\begin{align*}
v_3(s(3^n, 1))-v_3(s(3^n, k))
&=\frac{1}{2}(3^n-2n-1)-v_3(s_{n, l, j}^{(1)})\\
&=\frac{1}{2}(3^l-1)+(n-l)(3^l-j-1)-2l+1
+v_3\Big(\Big\lfloor\frac{j}{2}\Big\rfloor\Big)-(l+v_3(j))\epsilon_j=:D_{l,j}^{(1)}.
\end{align*}
It remains to show that $D_{l,j}^{(1)}\ge 0$. 

If $j$ is even, then $2\le j\le 2\cdot 3^{l-1}$, and
\begin{align*}
D_{l,j}^{(1)}&=\frac{1}{2}(3^l-1)+(n-l)(3^l-j-1)-2l+1+v_3(j)\\
&\ge \frac{1}{2}(3^l-1)+(n-l)(3^{l-1}-1)-2l+1
\ge \frac{1}{2}(3^l-1)-2l+1\ge 0.
\end{align*}

If $j$ is odd, then $l\ge 2$, $3\le j\le 2\cdot 3^{l-1}+1$.  
In this case, 
\begin{align*}
D_{l,j}^{(1)}=\frac{1}{2}(3^l-1)+(n-l)(3^l-j-1)-3l-v_3(j)+v_3(j-1)+1.
\end{align*}
For $l=2$, one checks directly that $D_{2,j}^{(1)}\ge 1$ holds for $j\in \{3, 5, 7\}$.
For $l\ge 3$, we have 
\begin{align*}
D_{l,j}^{(1)}&\ge \frac{1}{2}(3^l-1)+(n-l)(3^{l-1}-2)-3l-(l-1)+1
\ge \frac{1}{2}(3^l-1)-4l+2\ge 3.
\end{align*}
Thus $D_{l,j}^{(1)}\ge 0$ in all cases, and 
hence (\ref{1.8}) holds for $n\ge 3$.
This completes the proof of Theorem \ref{thm3}. \qed

\medskip
The proof of Theorem \ref{thm4} is parallel, using the 
formula in Theorem \ref{thm1} with $a=2$. We give the details. 

\medskip
\noindent{\it Proof of Theorem \ref{thm4}.} 
First, the boundary cases follow from 
$v_3(s(2\cdot3^n, 2\cdot3^n))=0$ and 
$v_3(s(2\cdot3^n, 2\cdot3^n-1))=n$. 
Thus (\ref{1.9}) holds for $k\in \{2\cdot3^n-1, 2\cdot3^n\}$.

Now let $1\le k\le 2\cdot3^n-2$. Write $k=2\cdot3^l-j$ with 
$(l,j)\in T_{2,n}$. By Theorem \ref{thm1}, 
\begin{align}\label{5.5}
v_3(s_{n, l, j}^{(2)})
=3^n-3^l-(n-l)(2\cdot3^l-j)+l-1
-v_3\Big(\Big\lfloor\frac{j}{2}\Big\rfloor\Big)+(l+v_3(j))\epsilon_j.
\end{align}

We first handle the exceptional case $n=1$.  
In this case, we have $l=1$ and $2\le j\le 5$.
By (\ref{5.5}), 
\[
v_3(s(2\cdot3^n, k))=(1+v_3(j))\epsilon_j-v_3\Big(\Big\lfloor\frac{j}{2}\Big\rfloor\Big)\le 2.
\]
Hence (\ref{1.9}) holds when $n=1$.

We now assume $n\ge2$. By (\ref{5.5}), we have 
\begin{align*}
&v_3(s(2\cdot3^n, 1))-v_3(s(2\cdot3^n, k))
=3^n-n-1-v_3(s_{n, l, j}^{(2)})\\
&=3^l+(n-l)(2\cdot3^l-j-1)-2l
+v_3\Big(\Big\lfloor\frac{j}{2}\Big\rfloor\Big)-(l+v_3(j))\epsilon_j=: D_{l,j}^{(2)}.
\end{align*}
It remains to show that $D_{l,j}^{(2)}\ge 0$. 

If $j$ is even, then $2\le j\le 4\cdot3^{l-1}$ and
\begin{align*}
&D_{l,j}^{(2)}=3^l+(n-l)(2\cdot3^l-j-1)-2l
+v_3(j)\ge (n-l)(2\cdot3^{l-1}-1)+3^l-2l\ge 1.
\end{align*}
Thus (\ref{1.9}) holds when $j$ is even.

If $j$ is odd, then $3\le j\le 4\cdot 3^{l-1}+1$ and $v_3(j)\le l$,
where $v_3(j)=l$ holds if and only if $j=3^l$. 
In this case, 
\begin{align*}
&D_{l,j}^{(2)}=3^l+(n-l)(2\cdot3^l-j-1)-3l-v_3(j)
+v_3(j-1).
\end{align*}
For $v_3(j)=l$, we have $j=3^l$ and
\[
D_{l,j}^{(2)}=(n-l)(3^l-1)+3^l-4l\ge 1
\]
since $1\le l\le n$ and $n\ge 2$.
For $v_3(j)\le l-1$, we have
\[
D_{l,j}^{(2)}\ge (n-l)(2\cdot3^{l-1}-2)+3^l-4l+1\ge 0.
\]
Thus $D_{l,j}^{(2)}\ge 0$ in all cases, and 
 (\ref{1.9}) follows for $n\ge 2$.
The proof of Theorem \ref{thm4} is complete. \qed

\subsection{Proof of Corollary \ref{cor2}}

Finally, we prove Corollary \ref{cor2}. The point is to 
translate the valuation estimate for Stirling numbers 
of the first kind into one for the corresponding elementary 
symmetric functions. 

Let $a\in\{1,2\}$. By (\ref{1.1}), we have 
\begin{align}\label{5.6}
H(a3^n, k)=\frac{s(a3^n+1, k+1)}{(a3^n)!}.
\end{align}

Assume that $2\mid(k-a)$, $n\ge 3$ and $1\le k\le a3^n$. 
Then, by (\ref{5.6}) and Theorems \ref{thm2} to \ref{thm4}, we obtain 
\begin{align*}
v_3(H(a3^n, k))&=v_3(s(a3^n+1, k+1))-v_3((a3^n)!)
=v_3(s(a3^n, k))-v_3((a3^n)!)\\
&\le  v_3(s(a3^n, 1))-v_3((a3^n)!)= v_3((a3^n-1)!)-v_3((a3^n)!)=-v_3(a3^n)=-n.
\end{align*}
This proves Corollary \ref{cor2}.

\section{Conclusions}

In this paper we proved the $3$-adic case of Conjecture \ref{cnj2} for
the Stirling numbers of the first kind. More precisely, we obtained an
explicit formula for $v_3(s(a3^n,k))$ 
for $a\in\{1,2\}$ and $1\le k\le a3^n$. As consequences, we derived a
comparison formula for the adjacent family $s(a3^n+1,k+1)$ and determined
the maximal $3$-adic valuations in the ranges considered above.

We also indicate why the present method does not directly extend to
arbitrary odd primes. The case $p=3$ is special because $p-1=2$, so the
residue class of $k$ modulo $p-1$ is completely determined by the parity
of $k$. In particular, every integer $k$ satisfies $k\equiv \epsilon_k \pmod 2$.
Thus the correction term $T_k$ in Conjecture \ref{cnj2} always falls into
its first case.

For primes $p\ge 5$, several nontrivial residue classes modulo $p-1$ occur,
and the Bernoulli-number term in Conjecture \ref{cnj2} can no longer be
absorbed into a parity distinction. Consequently, a corresponding
leading-term analysis would require additional congruence information and
a substantially finer decomposition of the parameter range.

\section*{Data availability}
No data was used for the research described in the article.
AMC
\section*{Declaration of generative AI and AI-assisted technologies in the manuscript preparation process}

During the preparation of this work, the authors used ChatGPT only for language polishing. 
The mathematical results, proofs, and conclusions were developed and verified by the authors. After using
this tool, the authors reviewed and edited the content as needed and take full responsibility
for the content of the published article.


\begin{thebibliography}{999}
\bibitem{[Am]} T. Amdeberhan, D. Manna and V. Moll, The 2-adic valuation of Stirling
numbers, {\it Experiment. Math.} {\bf 17} (2008), 69--82.
\bibitem{[CF]} F. Clarke, Hensel's lemma and the divisibility by primes of Stirling-like numbers,
{\it J. Number Theory} {\bf 52} (1995), 69--84.
\bibitem{[Da]} D.M. Davis, Divisibility by 2 of Stirling-like numbers,
{\it Proc. Amer. Math. Soc.} {\bf 110} (1990), 597--600.
\bibitem{[HZ1]} S.F. Hong, J.R. Zhao and W. Zhao, The 2-adic valuations of
Stirling numbers of the second kind, {\it Int. J. Number Theory} {\bf 8} (2012), 1057--1066.
\bibitem{[TL]} T. Lengyel, On the divisibility by 2 of Stirling numbers of the second kind,
{\it Fibonacci Quart.} {\bf 32} (1994), 194--201.
\bibitem{[TL2]} T. Lengyel, On the 2-adic order of Stirling numbers of the second kind
and their differences, {\it DMTCS Proc. AK} (2009), 561--572.
\bibitem{[TL3]} T. Lengyel, Alternative proofs on the 2-adic order of Stirling numbers
of the second kind, {\it Integers} {\bf 10} (2010), 453--463.
\bibitem{[Lu]} A.T. Lundell, A divisibility property for Stirling numbers,
{\it J. Number Theory} {\bf 10} (1978), 35--54.
\bibitem{[Mi]} P. Miska, On $p$-adic valuations of Stirling numbers,
{\it Acta Arith.} {\bf 186} (2018), 337--348.
\bibitem{[Wan]} S. De Wannermacker, On 2-adic orders of Stirling numbers of the second kind,
{\it Integers} {\bf 5} (2005), A21.
\bibitem{[HZ2]} J.R. Zhao, S.F. Hong and W. Zhao, Divisibility by 2 of Stirling numbers
of the second kind and their differences, {\it J. Number Theory} {\bf 140} (2014), 324--348.
\bibitem{[HZ3]} W. Zhao, J.R. Zhao and S.F. Hong, The 2-adic valuations of differences
of Stirling numbers of the second kind, {\it J. Number Theory} {\bf 153} (2015), 309--320.
\bibitem{[LS]} P. Leonetti and C. Sanna, On the $p$-adic valuation of
Stirling numbers of the first kind, {\it Acta Math. Hungar.} {\bf 151} (2017), 217--231.

\bibitem{[Alt]} C. Altunta\c{s},
On the $p$-adic valuation of generalized harmonic numbers,
{\it Bull. Korean Math. Soc.} {\bf 60} (2023), 933--955.
\bibitem{[Bo]} D.W. Boyd, A $p$-adic study of the partial sums of the harmonic series,
{\it Experiment. Math.} {\bf 3} (1994), 287--302.
\bibitem{[CCG]} L. Carofiglio, G. Cherubini and A. Gambini,
On Eswarathasan--Levine and Boyd's conjectures for harmonic numbers,
{\it Bull. Aust. Math. Soc.} {\bf 113} (2026), 15--25.
\bibitem{[CDFG]} L. Carofiglio, L. De Filpo and A. Gambini,
$p$-adic valuation of harmonic sums and their connections with Wolstenholme primes,
{\it Indian J. Pure Appl. Math.} {\bf 55} (2024), 555--566.
\bibitem{[CT]} Y.G. Chen and M. Tang, On the elementary symmetric functions of
$1, 1/2, ...,$ $1/n$, {\it Amer. Math. Monthly} {\bf 119} (2012), 862--867.
\bibitem{[EN]} P. Erd\H{o}s and I. Niven, Some properties of partial sums of
the harmonic series, {\it Bull. Amer. Math. Soc.} {\bf 52} (1946), 248--251.
\bibitem{[Es]} A. Eswarathasan and E. Levine, $p$-Integral harmonic sums,
{\it Discrete Math.} {\bf 91} (1991), 249--257.
\bibitem{[HW]} S.F. Hong and C.L. Wang, The elementary symmetric functions of
reciprocal arithmetic progressions, {\it  Acta Math. Hungar.} {\bf 144} (2014), 196--211.
\bibitem{[Ka]} K. Kamano, On 3-adic valuations of generalized harmonic numbers,
{\it Integers} {\bf 12} (2012), 311--319.

\bibitem{[LHQW]} Y.Y. Luo, S.F. Hong, G.Y. Qian and C.L. Wang,
The elementary symmetric functions of a reciprocal polynomial sequence,
{\it C. R. Math. Acad. Sci. Paris} {\bf 352} (2014), 269--272.
\bibitem{[N]} T. Nagell, Eine Eigenschaft gewisser Summen,
{\it Skr. Norske Vid. Akad. Kristiania} {\bf 13} (1923), 10--15.
\bibitem{[Sa]} C. Sanna, On the $p$-adic valuation of harmonic numbers,
{\it J. Number Theory} {\bf 166} (2016), 41--46.
\bibitem{[T]} L. Theisinger, Bemerkung \"uber die harmonische Reihe,
{\it Monatsh. Math. Phys.} {\bf 26} (1915), 132--134.
\bibitem{[WH]} C.L. Wang and S.F. Hong, On the integrality of the elementary
symmetric functions of $1, 1/3, ...,1/(2n-1)$,
{\it Math. Slovaca} {\bf 65} (2015), 957--962.
\bibitem{[WC]} B.L. Wu and Y.G. Chen, On certain properties of harmonic numbers,
{\it J. Number Theory} {\bf 175} (2017), 66--86.

\bibitem{[L]} T. Lengyel, On $p$-adic properties of the Stirling numbers of the first kind,
{\it J. Number Theory} {\bf 148} (2015), 73--94.
\bibitem{[KY]} T. Komatsu and P. Young, Exact $p$-adic valuations of
Stirling numbers of the first kind, {\it J. Number Theory} {\bf 177} (2017), 20--27.

\bibitem{[QH]} M. Qiu and S.F. Hong, 2-Adic valuations of Stirling numbers
of the first kind, {\it Int. J. Number Theory} {\bf 15}(2019), 1827--1855.
\bibitem{[HQ]} S.F. Hong and M. Qiu, On the $p$-adic properties of
Stirling numbers of the first kind, {\it Acta Math. Hungar.} {\bf 161}(2020), 366--395.

\bibitem{[LC]} L. Comtet, {\it Advanced combinatorics: The art of finite and infinite expansions,
Revised and Enlarged Edition}, D. Reidel Publishing Co., Dordrecht and Boston, 1974.

\bibitem{[VA]} V. Adamchik, On Stirling numbers and Euler sums,
{\it J. Comput. Appl. Math.} {\bf 79} (1997), 119--130.

\bibitem{[K]} N. Koblitz, {\it $p$-Adic numbers, $p$-adic analysis
and zeta-functions}, 2nd ed., GTM {\bf 58}, Springer-Verlag, New York, 1984.

\end{thebibliography}
\end{document}